\chardef\bslash=`\\ 
\newtheorem{thm}{Theorem}[section]
\newtheorem{claim}[thm]{Claim}
\newtheorem{cor}[thm]{Corollary}
\newtheorem{lem}[thm]{Lemma}
\newtheorem{prop}[thm]{Proposition}
\newtheorem{question}[thm]{Question}
\theoremstyle{definition}
\newtheorem{defn}[thm]{Definition}
\newtheorem{rem}[thm]{Remark}
\newtheorem{exa}[thm]{Example}
\newtheorem{fact}[thm]{Fact}
\newcommand{\Ab}{\mathrm{Ab}}
\newcommand\ZZ{\mathbb{Z}}
\newcommand\G{\mathcal{G}}
\newcommand{\limn}[1]{\operatorname{lim}^{#1}}
\newcommand\xbf{\mathbf{x}}
\newcommand\bb[1]{^{[\![#1]\!]}}
\DeclareMathOperator{\colim}{colim}
\newcommand\PH{\mathrm{PH}}
\newcommand\dom{\operatorname{dom}}
\newcommand{\Om}{\Omega}
\begin{document}

\title[Additivity of Derived Limits]{Addditivity of Derived Limits in the Cohen model}
\author{Nathaniel Bannister}
\address{Department of Mathematical Sciences \\
Carnegie Mellon University \\
Pittsburgh, PA 15213}
\email{nathanib@andrew.cmu.edu}

\date{Received on MONTH, YEAR}
\issueinfo{VOL}{NUM}{MONTH}{YEAR}
\begin{abstract}
    We show that, in the model constructed by adding sufficiently many Cohen reals, derived limits are additive on a large class of systems. This generalizes the work of Jeffrey Bergfalk, Michael Hru\v s\'ak, and Chris Lambie-Hanson which focuses on the system $\mathbf{A}$.
    In the process, we isolate a partition principle responsible for the vanishing of derived limits on collections of Cohen reals and reframe the propagating trivializations results of Bergfalk, Hru\v s\'ak and Lambie-Hanson as a theorem of ZFC.
    In light of results of the author, Jeffrey Bergfalk, and Justin Moore, the additivity of derived limits also implies additivity results for strong homology. 
\end{abstract}

\maketitle
\thanks{
The author would like to thank James Cummings for helpful discussions and considerable aid in revising this paper, as well as Jeffrey Bergfalk for insightful discussions and comments regarding this work, and Matteo Casarosa for interest in the project and encouragement to complete it. 
The author would also like to thank the anonymous referee for helpful comments and suggestions.
}
\tableofcontents
\section{Introduction}

Strong homology was originally defined by Marde\v{s}i\'c and offers a correction to the theory of \v{C}ech homology, which fails to satisfy the exactness axiom. 
A comprehensive reference for strong homology is \cite{SSH}. 
Motivated by questions coming from strong homology, in particular, whether it is \emph{additive} in the sense that it maps disjoint sums of spaces to direct sums of homology groups, Marde\v{s}i\'c and Prasolov consider in \cite{SHINA} the inverse system $\mathbf{A}$ of abelian groups. 
$\mathbf{A}$ is indexed over $\omega^\omega$ (the set of all functions from the natural numbers to the natural numbers) with respect to the ordering $x\leq y$ if and only if for all $n$, $x(n)\leq y(n)$; for each index $x$ the group $\mathbf{A}_x$ is defined by
\[\mathbf{A}_x=\bigoplus_{i<\omega}\ZZ^{x(i)}\]
and if $x\leq y$ the map from $\mathbf{A}_y$ to $\mathbf{A}_x$ is the canonical projection map.
The system $\mathbf{A}$ appears naturally in the computation of the strong homology of a countable disjoint sum of Hawaiian earrings, and of higher dimensional variants of the Hawaiian earrings.
Marde\v{s}i\'c and Prasolov show that strong homology is additive on a countable disjoint sum of Hawaiian earrings if and only if the derived limits $\lim^n\mathbf{A}$ are $0$ for every $n>0$. 
Moreover, they connect the vanishing of $\lim^1\mathbf{A}$ to a set theoretic question of the form ``is every coherent family trivial?'' and show that, in the presence of the Continuum Hypothesis, $\lim^1\mathbf{A}\neq0$. 
Several years later, Dow, Simon, and Vaughan show in \cite{DSV} that $\lim^1\mathbf{A}=0$ in the presence of the Proper Forcing Axiom, a set theoretic assumption widely believed to have the consistency strength of a supercompact cardinal.
In \cite[Theorem 8.7]{PPT}, Todor\v{c}evi\'c reduces the hypothesis to the Open Graph Axiom (also referred to as , a consequence of the Proper Forcing Axiom which has no large cardinal strength. 

Prasolov proves in \cite{NASH} that in ZFC set theory strong homology is not additive on the class of all topological spaces. 
The example involves a countable disjoint sum of copies of a Hawaiian earring with uncountably many circles, and nonadditivity is established by showing that the first derived limit of a ``tall'' version of $\mathbf{A}$ is nonzero. 
Prasolov's result still leaves the question of whether there can be a rich class of spaces on which strong homology is additive - the spaces used in the example are not metrizable, nor even first countable. 

In recent years, attention has returned to the derived limits of $\mathbf{A}$ as well as to the motivating question of the classes of spaces on which strong homology can consistently be additive. 
\begin{itemize}
    \item Bergfalk shows in \cite{SHDLST} that the Proper Forcing Axiom implies that $\lim^2\mathbf{A}\neq0$; in particular, the Proper Forcing Axiom still implies a failure of additivity for strong homology even for the class of closed subspaces of $\mathbb{R}^n$. 
    \item In a recent paper, Veli\v{c}kovi\'c and Vignati (\cite{NVHDL}) prove that for any $n$, it is consistent relative to ZFC that $\lim^n\mathbf{A}$ is nonzero. 
    \item In \cite{SVHDL}, Bergfalk and Lambie-Hanson show that consistently from a mild large cardinal hypothesis, $\lim^n\mathbf{A}=0$ for all $n>0$. 
    As a corollary, in their model strong homology is additive on a countable sum of Hawaiian earrings. 
    \item The author, joint with Bergfalk and Moore, shows in \cite{ASH} that in the model of \cite{SVHDL}, strong homology is additive on the class of locally compact separable metric spaces. 
    This result involves generalizing the definition of the system $\mathbf{A}$ to a class of inverse systems of abelian groups known as ``$\Om$ systems,'' showing that additivity of strong homology for locally compact separable metric spaces is equivalent to the additivity of derived limits of $\Om$ systems, and finally showing that additivity of derived limits for $\Om$ systems holds in the model of \cite{SVHDL}. 
    \item In \cite{SVHDLwLC}, Bergfalk, Hru\v{s}\'ak, and Lambie-Hanson remove the large cardinal hypothesis for the vanishing of the derived limits of $\mathbf{A}$. 
\end{itemize}

The main theorem of this paper is Theorem \ref{add} that in the same model used by Bergfalk, Hru\v{s}\'ak, and Lambie-Hanson, derived limits are additive for the class of all $\Om$ systems; in light of \cite[Theorem 6]{ASH}, strong homology is additive on the class of locally compact separable metric spaces. 
The main corollary is the following:

\begin{cor}
    It is consistent relative to ZFC that strong homology is additive on the class of locally compact separable metric spaces. 
\end{cor}

The key new ingredient in the proof is to analyze the injective objects in the abelian category of $\Om$ systems, which we call the {\em{injective systems}}; additional properties we will prove about injective systems will allow us to mimic the proofs in \cite{SVHDLwLC}. 
The class of injective systems is rich enough that vanishing of higher derived limits on injective systems implies additivity of derived limits for all $\Om$ systems. 

The techniques involved to produce these results naturally apply to a broader class of systems, which we call ``$\Om_\kappa$ systems.'' 
For infinite cardinals $\kappa$, the $\Om_\kappa$ systems are a ``wide'' version of $\Om$ systems, where we now allow for $\kappa$ many towers. 
The associated objects include the system called $\mathbf{A}_\kappa$ in \cite{SHDLST}, the natural variation of $\mathbf{A}$ indexed over $\omega^\kappa$.
The main theorem we will prove is the following:
\begin{thm} \label{add}
Suppose that $\chi\geq\beth_\omega(\kappa)$. 
Then $\operatorname{Add}(\omega,\chi)$, the forcing to add $\chi$ many Cohen reals, forces that whenever $\mathcal{G}$ is an $\Om_{\kappa}$ system, the canonical map 
\[\bigoplus_{\alpha<\kappa}\lim{}^n\G_\alpha\to \lim{}^nG\]
is an isomorphism. 
\end{thm}
Additivity of derived limits for $\Om_\kappa$ systems holds implications for strong homology as well. 
For example, we show in section \ref{kappa_add_sh} the following generalization of \cite[Theorem 6]{ASH}:
\begin{thm} \label{add_kappa}
    The following are equivalent:
    \begin{enumerate}
\item whenever $X$ is a locally compact metric space of weight at most $\kappa$, 
\[\overline{H}_p(X)\cong\underset{\substack{K\subseteq X\\K\text{ compact}}}{\colim}\overline{H}_p(K),\]
where $\overline{H}_p$ is strong homology (recall that the weight of a topological space is the minimum cardinality of a basis).
That is, strong homology has \emph{compact supports} on the class of locally compact separable metric spaces. \label{cs}
    \item whenever $\langle X_i\mid i<\kappa\rangle$ are locally compact metric spaces of weight at most $\kappa$, the natural map
\[\bigoplus_{i<\kappa}\overline{H}_p(X_i)\to\overline{H}_p\left(\coprod_{i<\kappa }X_i\right)\]
is an isomorphism; that is, strong homology is additive on the class of locally compact metric spaces of weight at most $\kappa$. \label{add_lc}
\item whenever $\langle X_i\mid i<\kappa\rangle$ are compact metric spaces, the natural map 
\[\bigoplus_{i<\kappa}\overline{H}_p(X_i)\to\overline{H}_p\left(\coprod_{i<\kappa }X_i\right)\]
is an isomorphism. \label{add_c}
        \item whenever $\mathcal{G}$ is an $\Om_{\kappa}$ system with all groups finitely generated, the canonical map 
\[\bigoplus_{\alpha<\kappa}\lim{}^n\G_\alpha\to \lim{}^nG\]
is an isomorphism. \label{add_om}
    \end{enumerate}
\end{thm}

The key corollary of our results is then
\begin{cor}
    Suppose that $\chi\geq\beth_\omega(\kappa)$. Then $\operatorname{Add}(\omega,\chi)$ forces that strong homology is additive and has compact supports on the class of locally compact metric spaces of weight at most $\kappa$. 
\end{cor}

The structure of the article is as follows: in section 2, we present our basic conventions and the relevant facts about higher dimensional $\Delta$-systems required for the argument. 
The specific definition of higher dimensional $\Delta$-systems we use was isolated by Lambie-Hanson in \cite{HDDS}. 
In section 3, we define $\Om_\kappa$ systems and explore some of their basic properties. 
We analyze the injective objects and define the notions of $n$-coherence and $n$-triviality to conclude that if every $n$-coherent family corresponding to every $\Om_\kappa$ system is trivial then derived limits are additive on the class of $\Om_\kappa$ systems and strong homology is additive on the class of locally compact separable metric spaces of weight at most $\kappa$.
Our definition of coherence generalizes the notion of higher coherence defined in \cite[Definition 3.1]{SHDLST}; the notion of triviality used there coincides with our definition of type $I$ trivial. 
Type $II$ triviality corresponds to the notion of triviality introduced in \cite[Proposition 2.13]{SVHDL}. 
Section 4 begins our analysis of how to generalize the results of \cite{SVHDLwLC} to the context of injective systems.
We start by generalizing the arguments for \cite[Lemma 4.7]{SVHDLwLC} to injective $\Om_\kappa$ systems, demonstrating how to propagate a trivialization for an $n$-coherent family from a sufficiently large set of sequences to all sequences.  We rephrase the result as a consequence of $\mathrm{ZFC}$ rather than specifically a result about Cohen forcing.
In section 5, we define an analogue of the Ramsey theoretic assertion $\PH_n$ from \cite{DAHDL} and prove that it holds after adding sufficiently many Cohen sequences. 
We complete the argument for additivity of derived limits in section 6 by showing how the partition principle implies that coherent families are trivial on a set of sequences which is sufficiently large for the propagation results of section 4. 
Finally, we note a number of questions still remaining open. 

In section 4, we assume familiarity with the basics of Cohen's method of forcing, as can be found in \cite[Chapter 4]{Kunen}. 
The proofs in section 3 rely on tools from homological algebra, particularly the theory of derived functors and the role injective objects play in this theory; most of the statements themselves should be readable without this background. 
A good reference for the relevant material is \cite[Chapters 1-3]{Weibel}.  

\section{Preliminaries} \label{Prelim}
\subsection{Notational conventions}
If $X$ is a set and $\kappa$ is a cardinal, then $[X]^\kappa=\{Y\subseteq X\,\mid\,|Y|=\kappa\}$ and $[X]^{<\kappa}=\{Y\subseteq X\mid |Y|<\kappa\}$. 
If $\kappa$ and $\lambda$ are cardinals, then we say that $\lambda$ is \emph{$<\!\kappa$-inaccessible} if $\nu^{<\kappa}<\lambda$ for all $\nu<\lambda$. 
If $X$ is a set of ordinals, then $\operatorname{otp}(X)$ denotes the order-type of $X$. 
We will often view finite sets of ordinals as finite increasing sequences of ordinals, and vice versa. 
For example, if $a \in[\mathrm{On}]^{<\omega}$ and $\ell<\operatorname{otp}(a)$, then $a(\ell)$ is the unique $\alpha \in a$ such that $|a \cap \alpha|=\ell .$ If $\mathbf{m} \subseteq \operatorname{otp}(a)$, then $a[\mathbf{m}]=\{a(\ell) \mid \ell \in \mathbf{m}\} .$ 
For any set $X$ of ordinals and natural number $n$, the notation $\left(\alpha_{0}, \ldots, \alpha_{n-1}\right) \in[X]^{n}$ will denote the conjunction of the statements $\left\{\alpha_{0}, \ldots, \alpha_{n-1}\right\} \in[X]^{n}$ and $\alpha_{0}<\ldots<\alpha_{n-1} .$ 
We write $a\sqsupseteq b$ to mean that $a$ is an end extension of $b$. 
For a sequence $\mathbf{x}$, we write $\mathbf{x}^i$ as the subsequence obtained by removing the element at index $i$. 

The forcings appearing in this paper will all be of the form $\mathbb{P}=\operatorname{Fn}(\chi\times\kappa,\omega)$ from \cite[Chapter 4, section 5]{Kunen}, where $\chi$ is an uncountable cardinal and $\chi\geq\kappa$ for some infinite cardinal $\kappa$. 
The poset $\mathbb{P}$ consists of finite partial functions from $\chi \times \kappa$ to $\omega$ ordered by reverse inclusion; note that $\mathbb{P}$ is isomorphic to the forcing to add $\chi$ many Cohen reals. 
Forcing with $\mathbb{P}$ produces a generic function $F: \chi \times \kappa \rightarrow \omega .$ For a fixed $\alpha<\chi$ we will refer to the function $F(\alpha, \cdot): \kappa \rightarrow \omega$ as the $\alpha^{\text {th }}$ Cohen sequence added by $\mathbb{P}$, and we will typically denote this function by $f_{\alpha} ;$ we denote the canonical $\mathbb{P}$-name in $V$ for $f_{\alpha}$ by $\dot{f}_{\alpha}$. 
If $G$ is $\mathbb{P}$-generic over $V$ and $W \subseteq \chi$ is in $V$ then $G_{W}$ denotes $\{p \in G \mid \operatorname{dom}(p) \subseteq W \times \kappa\} .$ 
For any condition $p$ in $\mathbb{P}$ let $u(p)$ denote the set $\{\alpha<\chi \mid \operatorname{dom}(p) \cap(\{\alpha\} \times \kappa) \neq \emptyset\}$ and let $\bar{p}$ denote the finite partial function from $\operatorname{otp}(u(p)) \times \kappa$ to $\omega$ defined as follows: for all $i<\operatorname{otp}(u(p))$ and all $\alpha<\kappa$, define $(i, \alpha)$ to be in the domain of $\bar{p}$ if and only if $(u(p)(i), \alpha) \in \operatorname{dom}(p)$; if so, let $\bar{p}(i, \alpha)=p(u(p)(i), \alpha) .$ 
Intuitively, $\bar{p}$ is a ``collapsed'' version of $p$ in the $\chi$ coordinate.  
Notice that the set $\{\bar{p} \mid p \in \mathbb{P}\}$ is a subset of the set of finite partial functions from $\omega \times \kappa$ to $\omega$.

Following standard conventions from homological algebra, chain complexes are denoted with a superscript bullet, replaced by an index to denote each group, so that $K^n$ is the $n$th group in the cochain complex $K^\bullet$.
$H^n(K^\bullet)$ is the $n$th cohomology group of the cochain complex $K^\bullet$.

\subsection{Higher Dimensional $\Delta$-systems}
Higher dimensional analogues of the classical notion of $\Delta$-systems began with the work of Todor\v{c}evi\'c \cite{RPPR} and Shelah \cite{WSR} in the '80s; the exact formulation we will use was isolated by Lambie-Hanson in \cite{HDDS}. 
In recent years, they have proven to be a useful tool, particularly for passing partition results from the ground model to forcing extensions (see for instance \cite{Jthesis, SVHDL, SVHDLwLC, HDDS, DAHDL, MS}). 
Here, we use them for the purpose of obtaining an analogue of the Ramsey theoretic assertion known as $\textrm{PH}_n$ from \cite{DAHDL}. 
Defining this variation of $\PH_n$, and using higher dimensional $\Delta$-systems to prove it holds after adding sufficiently many Cohen reals, is the topic of section 5.
We start with a preliminary definition; note that we identify sets of ordinals with their increasing enumerations.

\begin{defn}
Suppose that $a$ and $b$ are sets of ordinals.
\begin{itemize}
    \item We say that $a$ and $b$ are \emph{aligned} if $\operatorname{otp}(a)=\operatorname{otp}(b)$ and $\operatorname{otp}(a \cap \gamma)=\operatorname{otp}(b \cap \gamma)$ for all $\gamma \in a \cap b .$ In other words, if $\gamma$ is a common element of two aligned sets $a$ and $b$, then it occupies the same relative position in both $a$ and $b$.
    \item If $a$ and $b$ are aligned then we let $\mathbf{r}(a, b):=\{i<\operatorname{otp}(a) \mid a(i)=b(i)\} .$ Notice that, in this case, $a \cap b=a[\mathbf{r}(a, b)]=b[\mathbf{r}(a, b)]$
\end{itemize}
\end{defn}

The following definition of a higher dimensional $\Delta$-system is due to Lambie-Hanson (\cite{HDDS}).

\begin{defn}
Suppose that $H$ is a set of ordinals, $n$ is a positive integer, and $u_{b}$ is a set of ordinals for all $b \in[H]^{n} .$ We call $\left\langle u_{b} \mid b \in[H]^{n}\right\rangle$ a {\em{uniform $n$-dimensional $\Delta$-system}} if there exists an ordinal $\rho$ and, for each $\mathbf{m} \subseteq n$, a set $\mathbf{r}_{\mathbf{m}} \subseteq \rho$ satisfying the following statements.
\begin{enumerate}
    \item $\operatorname{otp}\left(u_{b}\right)=\rho$ for all $b \in[H]^{n}$.
    \item For all $a, b \in[H]^{n}$ and $\mathbf{m} \subseteq n$, if $a$ and $b$ are aligned with $\mathbf{r}(a, b)=\mathbf{m}$, then $u_{a}$ and $u_{b}$ are aligned with $\mathbf{r}\left(u_{a}, u_{b}\right)=\mathbf{r}_{\mathbf{m}}$.
    \item For all $\mathbf{m}_{0}, \mathbf{m}_{1} \subseteq n$, we have $\mathbf{r}_{\mathbf{m}_{0} \cap \mathbf{m}_{1}}=\mathbf{r}_{\mathbf{m}_{0}} \cap \mathbf{r}_{\mathbf{m}_{1}}$. In particular, if $\mathbf{m}_0\subseteq\mathbf{m}_1$ then $\mathbf{r}_{\mathbf{m}_0}\subseteq\mathbf{r}_{\mathbf{m}_1}$.
\end{enumerate}
\end{defn}

Definition \ref{k_add_def} and a special case of Lemma \ref{k_addable} appear as \cite[Claim 6.2/6.3]{SVHDLwLC}. 
For completeness, we will give the proof of Lemma \ref{k_addable}, which is essentially the same as what appears as \cite[Claim 6.2/6.3]{SVHDLwLC} applied to all higher dimensional $\Delta$-systems.

\begin{defn} \label{k_add_def}
$\alpha\in H$ is \emph{$k$-addable} for $a\in [H]^m$ if $\alpha\not\in a$ and $|a\cap \alpha|=k$. 
This is equivalent to the combination of 
\begin{itemize}
    \item $k\leq m$.
    \item If $k>0$ then $\alpha>a(k-1)$.
    \item If $k<m$ then $\alpha<a(k)$.
\end{itemize}
\end{defn}

\begin{lem} \label{k_addable}
Suppose that $1\leq n<\omega$ and $\langle u_b\mid b\in[H]^n\rangle$ is a uniform $n$-dimensional $\Delta$-system as witnessed by $\rho$ and $\langle \mathbf{r_m}\mid \mathbf{m}\subseteq n\rangle$ and assume that $H$ has no largest element. 
If there exists an $\alpha$ which is $k$-addable for $a\in [H]^m$ with $m<n$, set $u_{a,k}=u_b[\mathbf{r}_{m+1\setminus\{k\}}]$ for some $b\in[H]^n$ such that $b[m+1\setminus\{k\}]=a$. 
Let $u_a=u_{a,|a|}$ for each $a\in [H]^m$ for $m<n$.
Then the following hold. 
\begin{enumerate}
    \item These definitions are independent of our choice of $b$
    \item For each $a\in[H]^{m}$ for $m< n$ and $k\leq m$, the collection 
    \[\{u_{a\cup\{\beta\}}\mid\beta\text{ is }k-\text{addable for }a\}\]
    is a $\Delta$-system with root $u_{a,k}$
\end{enumerate}
\end{lem}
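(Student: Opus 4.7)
My plan is to derive both parts from the alignment clause in the definition of a uniform $\Delta$-system, using the unboundedness of $H$ to move auxiliary choices of $b$ into generic position. Throughout I will abbreviate $\mathbf{m}:=(m+1)\setminus\{k\}$ and use that clause (3) of the $\Delta$-system definition makes the map $\mathbf{n}\mapsto\mathbf{r}_{\mathbf{n}}$ monotone under $\subseteq$, so in particular $\mathbf{r}_{\mathbf{m}}\subseteq\mathbf{r}_{m+1}$.

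For part (1), I would take two candidate extensions $b,b'\in[H]^n$ of $a$, each with $b[\mathbf{m}]=b'[\mathbf{m}]=a$, and produce a third $b''\in[H]^n$ with $b''[\mathbf{m}]=a$ and $b''\cap(b\cup b')=a$. This is possible because $H$ has no largest element: the positions of $b''$ not in $\mathbf{m}$ can be filled with ordinals from $H$ chosen sufficiently large to avoid $b$ and $b'$. Then $b''$ and $b$ share precisely the elements of $a$, which occupy positions $\mathbf{m}$ in each, so they are aligned with $\mathbf{r}(b'',b)=\mathbf{m}$; the alignment clause then gives $u_{b''}[\mathbf{r}_{\mathbf{m}}]=u_b[\mathbf{r}_{\mathbf{m}}]$, and the symmetric equality for $b'$ chains through $b''$ to give the desired independence.

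For part (2), I would fix distinct $k$-addable $\beta_1,\beta_2$, set $c_j=a\cup\{\beta_j\}$, and (if $m+1<n$) extend each $c_j$ to $b_j\in[H]^n$ with $b_j[\{0,\dots,m\}]=c_j$ and $b_1\cap b_2=a$; if $m+1=n$ I would simply take $b_j=c_j$. In either case $b_1$ and $b_2$ share precisely the elements of $a$ at positions $\mathbf{m}$, so they are aligned with $\mathbf{r}(b_1,b_2)=\mathbf{m}$, and the alignment clause yields $u_{b_1}\cap u_{b_2}=u_{b_1}[\mathbf{r}_{\mathbf{m}}]=u_{a,k}$. Since by the defining identity $u_{c_j}=u_{b_j}[\mathbf{r}_{m+1}]$ and $\mathbf{r}_{\mathbf{m}}\subseteq\mathbf{r}_{m+1}$, one containment $u_{a,k}\subseteq u_{c_1}\cap u_{c_2}$ is immediate; for the reverse, any common element of $u_{c_1}$ and $u_{c_2}$ lies in $u_{b_1}\cap u_{b_2}$ and hence in $u_{a,k}$.

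The real content is just the alignment clause; the main bookkeeping challenge I anticipate is verifying that the auxiliary $b''$ and $b_j$ can indeed be arranged with no stray coincidences in their ``upper'' parts which would inflate $\mathbf{r}(\cdot,\cdot)$ beyond $\mathbf{m}$, and this is precisely the role played by the hypothesis that $H$ has no largest element.
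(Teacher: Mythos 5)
Your proof of part (2) is correct and essentially matches the paper's argument. However, your proof of part (1) has a genuine gap at position $k$. You require a witness $b''\in[H]^n$ with $b''[\mathbf{m}]=a$ and $b''\cap(b\cup b')=a$, which forces $b''(k)\notin\{b(k),b'(k)\}$, and you justify the existence of $b''$ by saying the positions not in $\mathbf{m}$ ``can be filled with ordinals from $H$ chosen sufficiently large.'' This works for the \emph{upper} free positions $m+1,\dots,n-1$, but position $k$ is also not in $\mathbf{m}$, and when $k<m$ the entry $b''(k)$ is constrained to satisfy $a(k-1)<b''(k)<a(k)$; it is bounded above by $a(k)$, so ``sufficiently large'' is not available. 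If $H$ contains only one ordinal in the interval $(a(k-1),a(k))$ (or only the two that $b$ and $b'$ already use), no such $b''$ exists, yet $b$ and $b'$ can still differ in their upper parts, so there is still something to prove.

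The paper avoids this by \emph{not} insisting that $b''$ differ from $b$ at position $k$: it takes $b''$ with $b''[m+1]=b[m+1]$ (so $b''(k)=b(k)$) and only requires the positions above $m$ to be fresh. Then $\mathbf{r}(b,b'')=m+1$ rather than $\mathbf{m}$, but since clause (3) of the uniform $\Delta$-system definition gives $\mathbf{r}_{\mathbf{m}}\subseteq\mathbf{r}_{m+1}$, alignment of $u_b$ and $u_{b''}$ with $\mathbf{r}(u_b,u_{b''})=\mathbf{r}_{m+1}$ still forces $u_b[\mathbf{r}_{\mathbf{m}}]=u_{b''}[\mathbf{r}_{\mathbf{m}}]$. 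Comparing $b'$ with $b''$ then splits into the two cases $b'(k)=b(k)$ (giving $\mathbf{r}(b',b'')=m+1$) and $b'(k)\neq b(k)$ (giving $\mathbf{r}(b',b'')=\mathbf{m}$), and in either case the same monotonicity argument yields $u_{b'}[\mathbf{r}_{\mathbf{m}}]=u_{b''}[\mathbf{r}_{\mathbf{m}}]$. Your argument can be repaired by adopting this choice of $b''$; the point you were missing is that you do not need $\mathbf{r}(b,b'')$ to equal $\mathbf{m}$ exactly, only to contain it, and that suffices by the monotonicity of $\mathbf{n}\mapsto\mathbf{r}_{\mathbf{n}}$ you already invoked.
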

\begin{proof}
We first show 1. 
Fix $a\in[H]^{m}$ and suppose that $a=b[m+1\setminus\{k\}]=b'[m+1\setminus\{k\}]$. 
Since $H$ has no largest element, there is a $b''$ such that $b''[m+1]=b[m+1]$ and $b''(j)\not\in b,b'$ for any $j>m+1$. 
Then $b$ and $b''$ are aligned with $\mathbf{r}(b,b'')=m+1$ and $b'$ and $b''$ are aligned with $\mathbf{r}(b',b'')$ either $m+1$ if $b(k)=b'(k)$ or $m+1\setminus \{k\}$ otherwise. 
Regardless, since $\mathbf{r}_{m+1\setminus k}\subseteq\mathbf{r}_{m+1}$, we see that $u_b[\mathbf{r}_{m+1\setminus\{k\}}]=u_{b''}[\mathbf{r}_{m+1\setminus\{k\}}]=u_{b'}[\mathbf{r}_{m+1\setminus\{k\}}]$.

We now show 2. 
If $\alpha\neq\alpha'$ are $k$-addable for $a$, pick $b\sqsupseteq a\cup\{\alpha\}$ and $b'\sqsupseteq a\cup\{\alpha'\}$ such that $b\cap b'=a$. 
Then $b$ and $b'$ are aligned with $\mathbf{r}(b, b')=m+1\setminus\{k\}$, so $u_b$ and $u_{b'}$ are aligned with $\mathrm{r}(u_b,u_b')=\mathbf{r}_{m+1\setminus\{k\}}$. 
Then 
\[\begin{aligned}
u_{a\cup\{\alpha\}}\cap u_{a\cup\{\alpha'\}}&=u_{b}[\mathbf{r}_{m+1}]\cap u_{b'}[\mathbf{r}_{m+1}]\\
&=u_b[\mathbf{r}_{m+1}]\cap u_{b'}[\mathbf{r}_{m+1}]\cap u_b[\mathbf{r}_{m+1\setminus\{k\}}]\cap u_{b'}[\mathbf{r}_{m+1\setminus\{k\}}]\\
&=u_b[\mathbf{r}_{{m+1}\setminus\{k\}}]\cap u_{b'}[\mathbf{r}_{m+1\setminus\{k\}}]\\
&=u_b[\mathbf{r}_{m+1\setminus\{k\}}]\\
&=u_{a,k}
\end{aligned}.\]
\end{proof}

We now cite the main theorem we will use to obtain the existence of higher dimensional $\Delta$-systems; it is a common refinement of both the classical $\Delta$-system lemma and the classical Erd\"os-Rado theorem.

\begin{defn} \label{sigma}
Suppose that $\lambda$ is an infinite regular cardinal. Recursively define cardinals $\sigma(\lambda, n)$ for $1 \leq n<\omega$ by letting $\sigma(\lambda, 1)=\lambda$ and, given $1 \leq n<\omega$, letting $\sigma(\lambda, n+1)=\left(2^{<\sigma(\lambda, n)}\right)^{+}$.
\end{defn}

\begin{fact}[Lambie-Hanson, {\cite[Theorem 2.10]{HDDS}}] \label{DSys_lem}
Suppose that
\begin{itemize}
    \item $1 \leq n<\omega ;$
    \item $\kappa<\lambda$ are infinite cardinals, $\lambda$ is regular and $<\!\!\kappa$-inaccessible, and $\mu=\sigma(\lambda, n)$;
    \item $c:[\mu]^{n} \rightarrow 2^{<\kappa}$;
    \item for all $b \in[\mu]^{n}$, we are given a set $u_{b} \in[\mathrm{On}]^{<\kappa}$.
\end{itemize}

Then there are an $H \in[\mu]^{\lambda}$ and $k<2^{<\kappa}$ such that
\begin{enumerate}
    \item $c(b)=k$ for all $b \in[H]^{n}$;
    \item $\left\langle u_{b} \mid b \in[H]^{n}\right\rangle$ is a uniform $n$-dimensional $\Delta$-system.
\end{enumerate}
\end{fact}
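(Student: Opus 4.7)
The plan is to prove this by induction on $n$, the dimension. For the base case $n = 1$, we have $\mu = \lambda$. Since $\lambda$ is $<\kappa$-inaccessible and regular, $2^{<\kappa} < \lambda$, so pigeonhole on $c$ produces $H_0 \in [\lambda]^\lambda$ on which $c$ is constantly some $k$. Now apply the classical $\Delta$-system lemma, justified by the regularity and $<\kappa$-inaccessibility of $\lambda$ and the assumption $u_{\{\alpha\}} \in [\mathrm{On}]^{<\kappa}$, to obtain $H \in [H_0]^\lambda$ on which the $u_{\{\alpha\}}$ share a common order-type $\rho$ and a common root. The uniformity conditions (1)--(3) are vacuous or immediate in dimension $1$, with $\mathbf{r}_\emptyset$ recording the root and $\mathbf{r}_{\{0\}} = \rho$.

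For the inductive step from $n$ to $n+1$, write $\mu_n = \sigma(\lambda, n)$, so $\mu = (2^{<\mu_n})^+$. The strategy is an Erd\H os--Rado style thinning followed by an appeal to the inductive hypothesis. To each $\beta < \mu$ we attach a ``type'' recording, for every $a \in [\beta]^{<n+1}$, the pair $(c(a \cup \{\beta\}),\, \text{order-isomorphism class of } u_{a \cup \{\beta\}})$ together with the alignment data between $u_{a \cup \{\beta\}}$ and the finitely many previously-registered sets. Recursively choose an increasing sequence $\langle \alpha_i \mid i < \mu_n \rangle$ in $\mu$ so that each $\alpha_i$ realizes a type that has been realized unboundedly often in what remains; the arithmetic $\mu = (2^{<\mu_n})^+$ is exactly what guarantees that at each stage there are at most $2^{<\mu_n} < \mu$ possible types, so such a choice is always available. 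The resulting $H_0 \in [\mu]^{\mu_n}$ is ``type-homogeneous'' in the sense that for any aligned $b, b' \in [H_0]^{n+1}$ the pairs $(c(b), u_b)$ and $(c(b'), u_{b'})$ exhibit identical combinatorial patterns.

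Now apply the inductive hypothesis inside $H_0$, regarding the $(n+1)$-dimensional data as an $n$-dimensional problem parametrized by the last coordinate: for each fixed position $k \leq n$, the collection $\{u_b \mid b \in [H_0]^{n+1}\}$ restricted via slicing gives an $n$-dimensional family on $[H_0]^n$, to which the induction applies (together with an auxiliary coloring encoding the additional structure in a space of size $<2^{<\kappa}$). Iterating the induction across the $n+1$ positions, or equivalently packaging everything into a single auxiliary coloring into $2^{<\kappa}$ and invoking the induction once, produces $H \in [H_0]^\lambda$ on which the full $(n+1)$-dimensional uniformity holds.

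The main obstacle is condition (3), namely $\mathbf{r}_{\mathbf{m}_0 \cap \mathbf{m}_1} = \mathbf{r}_{\mathbf{m}_0} \cap \mathbf{r}_{\mathbf{m}_1}$; this is not a consequence of pairwise alignment but rather a compatibility across all subsets $\mathbf{m} \subseteq n+1$ simultaneously. The type-encoding in the Erd\H os--Rado step must therefore be fine enough to record, for each pair of aligned inputs, not merely that $u_a$ and $u_b$ are aligned but also the precise set $\mathbf{r}(u_a, u_b)$ as a function of $\mathbf{r}(a,b)$. It is precisely to absorb this extra bookkeeping that the recursive cardinal bound takes a $2^{<(\cdot)}$ at each step, rather than merely $(\cdot)^+$.
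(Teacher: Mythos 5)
The paper does not actually prove this statement; it records it as a \emph{Fact} cited from Lambie-Hanson's paper \cite{HDDS}, so there is no internal proof to compare against. Your sketch takes the natural route one would guess from the cardinal arithmetic $\sigma(\lambda,n+1)=(2^{<\sigma(\lambda,n)})^+$, namely an Erd\H os--Rado end-homogenization followed by an appeal to the inductive hypothesis, and this is plausibly the route taken in \cite{HDDS}. However, the sketch has a genuine gap in the passage from $H_0$ to $H$, and the description of what $H_0$ achieves overstates what the recursive construction actually delivers.

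Concretely: the recursive type-selection (properly run, maintaining a decreasing sequence of candidate sets of size $\mu$, not merely choosing types ``realized unboundedly often'') produces an \emph{end-homogeneous} set $H_0$, meaning that for $a\in[H_0]^{\leq n}$ the data attached to $a\cup\{\beta\}$ is independent of the choice of $\beta\in H_0$ above $\max a$. Your claim that $H_0$ is already ``type-homogeneous in the sense that for any aligned $b,b'\in[H_0]^{n+1}$ the pairs $(c(b),u_b)$ and $(c(b'),u_{b'})$ exhibit identical combinatorial patterns'' is too strong --- end-homogeneity only controls pairs $b,b'$ that agree in their first $n$ coordinates. Full homogeneity across arbitrary aligned pairs is exactly what the subsequent application of the inductive hypothesis must provide, and this is the step your sketch leaves essentially unexplained: you do not say which sets $u'_a\in[\mathrm{On}]^{<\kappa}$ for $a\in[H_0]^n$ are fed to the IH (a canonical choice, such as the root $u_{a\cup\{\beta\}}\cap u_{a\cup\{\beta'\}}$, together with a coloring recording $\mathrm{otp}(u_{a\cup\{\beta\}})$, the positions of the root inside it, and $c(a\cup\{\beta\})$, has to be specified), nor how uniformity of $\langle u'_a\rangle$ plus end-homogeneity yields conditions (1)--(3) for $\langle u_b\mid b\in[H]^{n+1}\rangle$. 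In particular condition (2) for $\mathbf m$ with $n\in\mathbf m$, and condition (3) --- which you correctly flag as the crux --- require a concrete cross-check between the root-position data recorded in $H_0$ and the $\mathbf r_{\mathbf m}$ delivered by the IH; observing that the $2^{<(\cdot)}$ in the recursion is ``fine enough to record'' this is not a substitute for carrying out the verification. Finally, the base case needs one more uniformization step than you describe: a common root and common order type do not by themselves guarantee that the root occupies the same relative positions in each $u_{\{\alpha\}}$; one must further stabilize the set of indices $\{i<\rho\mid u_{\{\alpha\}}(i)\in\text{root}\}$, which is possible since $2^{<\kappa}<\lambda$ but should be said.
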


\section{$\Om_{\kappa}$ systems} \label{Om_kap_sec}

Abelian categories are a general place where homological algebra is well behaved; in particular, there is a well behaved notion of exact sequence. 
A canonical example is the category Ab of abelian groups or, more generally, the category of left $R$ modules over a ring $R$.
A question arising in homological algebra is how far a functor between abelian categories is from being \emph{exact}; that is, preserving exact sequences. 
One particular example of this problem comes from the theory of inverse limits. 
On some appropriate abelian category of inverse systems of abelian groups, taking the inverse limit fails to be exact; the canonical example of this phenomenon is the following short exact sequence of systems each indexed by the natural numbers (we call such a system a \emph{tower}):

\begin{center}
\begin{tikzcd}
            \vdots\arrow[d]& \vdots \arrow[d]                           & \vdots \arrow[d]                    & \vdots \arrow[d]                   & \vdots\arrow[d]  \\
0 \arrow[r]\arrow[d] & \mathbb{Z} \arrow[r, "2^n"] \arrow[d, "2"] & \mathbb{Z} \arrow[r] \arrow[d, "1"] & \mathbb{Z}/2^n\ZZ \arrow[r] \arrow[d] & 0\arrow[d] \\
            \vdots\arrow[d]& \vdots \arrow[d, "2"]                      & \vdots \arrow[d, "1"]               & \vdots \arrow[d]                   &\vdots\arrow[d]   \\
0 \arrow[r]\arrow[d] & \mathbb{Z} \arrow[d, "2"] \arrow[r, "4"]   & \mathbb{Z} \arrow[d, "1"] \arrow[r] & \mathbb{Z}/4\ZZ \arrow[d] \arrow[r]   & 0\arrow[d] \\
0 \arrow[r] & \mathbb{Z} \arrow[r, "2"]                  & \mathbb{Z} \arrow[r]                & \mathbb{Z}/2\ZZ \arrow[r]             & 0
\end{tikzcd}
\end{center}
The limit of the second tower is $0$, the limit of the third tower is $\ZZ$, and the limit of the fourth tower is the additive group of 2-adic integers $\ZZ_2$ but the map from $\ZZ$ to $\ZZ_2$ induced by the maps from the third tower to the fourth tower is not onto, so the inverse limit functor $\lim$ does not preserve exact sequences. 
However, $\lim$ does turn out to be \emph{left exact}, meaning that if 
\[0\to{A}\to{B}\to{C}\to0\]
is a short exact sequence of inverse systems then
\[0\to\lim{A}\to\lim{B}\to\lim{C}\]
is an exact sequence of abelian groups. 
Left exactness of $\lim$ follows from the fact that $\lim$ is the right adjoint of the functor which assigns to a group $G$ the system where all groups are $G$ and all maps are the identity; see for instance, \cite[Theorem 2.6.1]{Weibel}.
In a scenario with a left exact functor (starting with a category with the technical assertion that there are \emph{enough injective objects}), the \emph{derived functors} are an attempt to measure the failure of exactness and to recover exactness. 
The derived functors of $\lim$ are a canonical sequence of functors $\lim^s$ which result in a long exact sequence of the form 
\[\begin{tikzcd}
    0\arrow[r]&\lim A\arrow[r]&\lim B\arrow[r]&\lim C\arrow[r]&{}\\
    {}\arrow[r]&\limn{1}A\arrow[r]&\limn{1}B\arrow[r]&\limn{1}C\arrow[r]&{}\\
    {}\arrow[r]&\limn{2}A\arrow[r]&\limn{2}B\arrow[r]&\limn{2}C\arrow[r]&\ldots
\end{tikzcd}\]
for any short exact sequence as above.

Two categories of inverse systems are of particular interest. 
For a fixed poset $\mathbb{P}$, the category $\operatorname{Ab}^\mathbb{P}$ consists of inverse systems of abelian groups indexed over $\mathbb{P}$ and a morphism from $\mathbf{G}$ to $\mathbf{H}$ consists of a family of group homomorphisms from $G_p$ to $H_p$ for each $p\in\mathbb{P}$ commuting with the structure maps of $\mathbf{G}$ and $\mathbf{H}$.
That is, the objects of $\operatorname{Ab}^{\mathbb{P}}$ are functors from $\mathbb{P}^{\operatorname{op}}$ to $\operatorname{Ab}$ with morphisms given by natural transformations. 
The second category of interest is the category pro-Ab, whose objects are inverse systems indexed over directed posets and whose morphisms can be (tersely) given by, for $X$ a system indexed by $\mathbb{P}$ and $Y$ a system indexed by $\mathbb{Q}$,
\[\hom(X,Y)=\operatorname{colim}_{p\in\mathbb{P}}\operatorname{lim}_{q\in\mathbb{Q}}\hom_{\text{Ab}}(X_p,Y_q).\]
The category pro-Ab is abelian and has enough injective objects (see, for instance, \cite[Theorem 15.8 and Theorem 15.15]{SSH}). 
Moreover, the functor $\lim$ is a left exact functor on this category and therefore admits derived functors $\lim^s$ as defined on pro-Ab.
We note that by \cite[Theorem 15.14]{SSH}, if $\mathbb{P}$ is directed then the derived functors of $\lim$ as computed in $\operatorname{Ab}^{\mathbb{P}}$ and as computed in pro-Ab coincide. 

We now describe a generalization of the $\Om$ systems introduced in \cite{ASH} which allows for wider systems. 
An $\Om$ system is a sequence of countably many $\omega$-indexed towers of abelian groups; $\Om_\kappa$ systems allow for $\kappa$ many towers and will allow for statements about derived limits indexed over $\omega^\kappa$.

\begin{defn} \label{Om_kappa sys}
Suppose $\kappa$ is a cardinal. 
An \emph{$\Omega_{\kappa}$ system} $\mathcal{G}$ is specified by an indexed collection $\{G_{\alpha,k} \mid \alpha<\kappa, k \in \omega\}$
of abelian groups along with, for $\alpha<\kappa$ and $j\geq k$, compatible homomorphisms
$p_{\alpha,j,k}: G_{\alpha,j} \to G_{\alpha,k}$. 
Such data give rise to the following additional objects:
\begin{itemize}

\item 
For each $x \in \omega^\kappa$ define 
${\displaystyle G_x := \bigoplus_{\alpha<\kappa} G_{\alpha,x(\alpha)}}$
and
${\displaystyle \overline{G}_x := \prod_{\alpha<\kappa} G_{\alpha,x(\alpha)}}$.
We regard $G_x$ as a subgroup of $\overline{G}_x$.

\item 
For each $x\in\omega^\kappa$ define 
${\displaystyle(\overline{G}/G)_x}=\overline{G}_x/G_x$.

\item
For each $x \leq y \in \omega^\kappa$ a homomorphism
$p_{y,x}:\overline{G}_y \to \overline{G}_x$
defined by $p_{y,x} := \prod_{\alpha<\kappa} p_{\alpha,y(\alpha),x(\alpha)}$; note that $p_{y,x}$ restricts to a homomorphism from $G_y$ to $G_x$ and factors to a map from $(\overline{G}/G)_y$ to $(\overline{G}/G)_x$, both of which we also denote as $p_{y,x}$.

\item 
The systems $G$, $\overline{G}$, and $\overline{G}/G$ indexed over $\omega^\kappa$ with structure given by the above points.

\item 
For each $\alpha<\kappa$ an inverse system $\G_\alpha$ indexed over $\omega$ with $(\G_\alpha)_k=G_{\alpha,k}$ and structure maps given by $p_{\alpha,j,k}$. 
We will often abbreviate $p_{\alpha,k+1,k}$ as $p_{\alpha,k}$. 
We denote the canonical map from $\lim \G_\alpha$ to $G_{\alpha,k}$ as $p_{\alpha,\omega,k}$.

\end{itemize}
\end{defn}

The systems $G,\overline{G},$ and $\overline{G}/G$ are connected through a short exact sequence
\[0\longrightarrow G\longrightarrow\overline{G}\longrightarrow\overline{G}/G\longrightarrow0\]
which gives rise to a long exact sequence of derived limits of the form
\[\begin{aligned}
    0&\longrightarrow &\lim G&\longrightarrow&\lim\overline{G} &\longrightarrow& \lim\overline{G}/G &\longrightarrow\\ & \longrightarrow&\lim{}^1G &\longrightarrow &\lim{}^1\overline{G} & \longrightarrow& \lim{}^1\overline{G}/G&\longrightarrow\\&\longrightarrow&\cdots
\end{aligned}\] 
Ultimately, we will be interested in the derived limits of $G$; we will use information about the derived limits of $\overline{G}$ and $\overline{G}/G$ to compute the derived limits of $G$. 

\begin{exa} \label{A}
Suppose $G_{\alpha,k}=\ZZ^k$ for each $\alpha,k$ and $p_{\alpha,j,k}$ is the canonical projection map. 
The resulting system $G$ is denoted $\mathbf{A}_\kappa$ in, for instance, \cite{SHDLST}.
We may think of $\overline{G}_x$ as consisting of functions from the area under the graph of $x$ to $\ZZ$ and of $G_x$ as the subgroup of finitely supported functions. 
$\overline{G}_x/G_x$ consists of functions from the area under the graph of $x$, where we identify two functions if they agree modulo finite.
\end{exa}
The key ingredient of the proof of Theorem \ref{add} will be to establish connections between the coherence and triviality of certain families with the additivity of derived limits. 
Coherent families will correspond directly to cocycles in a chain complex whose cohomology yields the derived limits of an inverse system. 
Two different versions of triviality will give us different information about how the corresponding element of the derived limit behaves in the above long exact sequence.

\subsection{$\Om_\kappa$ systems and strong homology} \label{kappa_add_sh}

 The main goal of this section is to outline how to generalize the proof of \cite[Theorem 6]{ASH} to show Theorem \ref{add_kappa}. 
We will use the following fact:
\begin{fact} \label{disjoint sum}
    Suppose $X$ is a locally compact metric space of weight at most $\kappa$. 
    There are locally compact separable metric spaces $\langle X_i\mid i<\kappa\rangle$ such that $X\cong\coprod_{i<\kappa}X_i$. 
\end{fact}
This is an easy generalization of {\cite[Theorem 7.3 of chapter XI]{Dug}}.
We now outline the proof of Theorem \ref{add_kappa}.
\begin{proof}
    The proofs of $(\ref{cs})\implies(\ref{add_lc})$ and of $(\ref{add_c})\iff(\ref{add_om})$ can be taken directly from directions $(1)\implies(2)$ and $(2)\iff(3)$ of the proof of \cite[Theorem 6]{ASH}; $(\ref{add_lc})\implies(\ref{add_c})$ is immediate. 
    For the direction $(\ref{add_c})\implies(\ref{cs})$, suppose $X$ is a locally compact metric space of weight at most $\kappa$. 
    By Fact \ref{disjoint sum}, we may fix locally compact separable metric spaces $\langle X_i\mid i<\kappa\rangle$ such that $X\cong\coprod_{i<\kappa}X_i$. 
    For each $i<\kappa$, fix an increasing sequence of open sets $\langle U^i_j\mid j<\omega\rangle$ such that $\overline{U^i_j}$ is compact and $X_i=\bigcup_{j<\omega}U^i_j$.
    Let $U_j=\coprod_{i<\kappa}U^i_j$.
    \begin{claim}
    For each $p<\omega$,
        \[\overline{H}_p(X)\cong \underset{j<\omega}{\colim}\overline{H}_p(\overline{U}_j)\cong\underset{\substack{K\subseteq X\\K\text{ compact}}}{\colim}\overline{H}_p(K).\]
    \end{claim}
    \begin{proof}
        The proof of the first isomorphism is identical to the proof of $(3)\implies(1)$ in the proof of \cite[Theorem 6]{ASH}. 
        For the second isomorphism, by \ref{add_c}, we have $\overline{H}_p(\overline{U_j})\cong \bigoplus_{i<\kappa}\overline{H}_p(\overline{U^i_j})$. 
        By the finite additivity of strong homology, 
        \[\bigoplus_{i<\kappa}\overline{H}_p(\overline{U^i_j})\cong\underset{F\subseteq\kappa\text{ finite}}{\colim}\bigoplus_{i\in F}\overline{H}_p(\overline{U^i_j})\cong \underset{F\subseteq\kappa\text{ finite}}{\colim}\overline{H}_p(\coprod_{i\in F}\overline{U^i_j}).\]
        In particular, since taking two colimits is the same as taking a colimit over the product index set, \[\underset{j<\omega}{\colim}\overline{H}_p(\overline{U_j})\cong\underset{\substack{F\subseteq\kappa\text{ finite}\\j<\omega}}{\colim}\overline{H}_p\left(\coprod_{i\in F}\overline{U^i_j}\right).\] 
        Since finite disjoint unions of the $\overline{U^i_j}$ are cofinal in all compact subsets of $X$, we obtain the second isomorphism.
    \end{proof}
\end{proof}

\subsection{Coherence and Triviality}
We now describe the general machinery by which we will prove additivity of derived limits. 
We will generally make use of the following cochain complex, whose cohomology gives the derived limits of an inverse system indexed over a subset of $\omega^\kappa$; an identical characterization applies to inverse systems indexed over any lower semi-lattice.

\begin{defn} \label{chain_cx}
Suppose that $X\subseteq\omega^\kappa$ and $\mathbf{G}$ is an inverse system of abelian groups indexed over $\omega^\kappa$. 
We say $\Phi\in\prod_{(x_0,...,x_n)\in X^{n+1}}\mathbf{G}_{\bigwedge x_i}$ is \emph{alternating} if for every permutation $\sigma\in S_{n+1}$ and every $\mathbf{x}\in X^{n+1}$, $\Phi_{\mathbf{x}}=\operatorname{sgn}(\sigma)\Phi_{\sigma(\mathbf{x})}$ where $\sigma$ acts by permuting indices. 
The \emph{alternating cochain complex} $K^\bullet(\mathbf{G}\upharpoonright X)$ is given by 

\[K^n(\mathbf{G}\upharpoonright X)=\left\{\Phi\in\prod_{(x_0,...,x_n)\in X^{n+1}}\mathbf{G}_{\bigwedge x_i}\,\middle|\,\Phi\text{ is alternating}\right\}\]

with $d\colon K^n\to K^{n+1}$ given by 
\[d\Phi(\mathbf{x})=\sum_i (-1)^ip_{\bigwedge{\xbf^i},\bigwedge {\xbf}}\Phi(\mathbf{x}^i).\]
In practice, $\mathbf{G}$ can be any of $G,\overline{G}$, or $\overline{G}/G$ from an $\Om_\kappa$ system as in Definition \ref{Om_kappa sys}.
\end{defn}

Note that we do not require $\bigwedge x_i$ to be a member of $X$ in the above definition. 
It is readily verified that $d^{n+1}d^n=0$ for any $n<\omega$; our main object of interest will be the cohomology groups $H^n(K^\bullet(\mathbf{G}\upharpoonright X))$. 

In general, $H^n(K^\bullet(\mathbf{G}\upharpoonright X))$ \emph{does not} correspond to $\lim^n(\mathbf{G}\upharpoonright X)$, the derived limits of the system indexed by those elements of $\omega^\kappa$ which are in $X$. 
For an extreme example, if $X$ consists of two incompatible points $x,y$ then $H^0(K^\bullet(\mathbf{G}\upharpoonright X))$ can be identified with all elements of $G_x\times G_y$ which project to the same element of $G_{x\wedge y}$ whereas $\lim^0(\mathbf{G}\upharpoonright X)$ is all of $G_x\times G_y$. 
Nevertheless, $H^n(K^\bullet(\mathbf{G}\upharpoonright X))$ does have a characterization in terms of derived limits. 
We first cite a theorem due to Leray (see, for instance, \cite[Exercise III.4.11]{HARTSHORNE}): 
\begin{fact} \label{Leray}
    Let $Y$ be a topological space, $\mathscr{F}$ a sheaf on $Y$ and $\mathscr{U}=\left\{U_i\right\}_{i \in I}$ an open cover of $Y$. If the sheaf cohomology group $\mathrm{H}^p(V, \mathscr{F})=0$ for all $p>0$ and all $V=U_{i_0} \cap \ldots U_{i_n}, i_j \in I$ then the \v{C}ech cohomology group $\check{\mathrm{H}}^p(\mathscr{U}, \mathscr{F}) $ coincides with the sheaf cohomology group $ \mathrm{H}^p(Y, \mathscr{F})$ for all $p$.
\end{fact}
The characterization of $H^n(K^\bullet(\mathbf{G}\upharpoonright X))$ is the following:

\begin{prop} \label{lim char}
$H^n(K^\bullet(\mathbf{G}\upharpoonright X))\cong \lim^n(\mathbf{G}\upharpoonright Y)$ where $Y$ is the downwards closure of $X$ with respect to the coordinatewise domination ordering. 
\end{prop}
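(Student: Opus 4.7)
The plan is to reinterpret $K^\bullet(\mathbf{G}\upharpoonright X)$ as an alternating \v{C}ech complex for a natural cover of $Y$, and then invoke the \v{C}ech-to-derived-functor spectral sequence.

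First, I would observe that $Y$, being downward closed in the meet-semilattice $\omega^\kappa$, is itself closed under binary meets: if $y_0, y_1 \in Y$ then $y_0 \wedge y_1 \leq y_0$ lies in $Y$. Thus every group $\mathbf{G}_{\bigwedge_i x_i}$ appearing in $K^n(\mathbf{G}\upharpoonright X)$ is indexed by an element of $Y$, so $K^\bullet(\mathbf{G}\upharpoonright X)$ is naturally built from $\mathbf{G}\upharpoonright Y$. I would then view $\mathbf{G}\upharpoonright Y$ as a sheaf of abelian groups on $Y$ equipped with the Alexandrov topology whose opens are the downward-closed subsets of $Y$; under this identification $\lim^n(\mathbf{G}\upharpoonright Y)$ is the sheaf cohomology $H^n(Y;\mathbf{G}\upharpoonright Y)$.

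Next I would identify $K^\bullet(\mathbf{G}\upharpoonright X)$ with the alternating \v{C}ech complex of the open cover $\mathcal{U} = \{U_x \mid x \in X\}$, where $U_x := \{z \in Y \mid z \leq x\}$. This is genuinely a cover of $Y$ precisely because $Y$ is the downward closure of $X$. Finite intersections of such principal downsets are again principal downsets, $U_{x_0} \cap \cdots \cap U_{x_n} = U_{\bigwedge_i x_i}$, each with a maximum element, and the value of the sheaf $\mathbf{G}\upharpoonright Y$ on $U_{\bigwedge_i x_i}$ is therefore exactly $\mathbf{G}_{\bigwedge_i x_i}$. Under this identification the \v{C}ech coboundary coincides with the $d$ of Definition \ref{chain_cx}.

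Finally I would observe that $\mathcal{U}$ is an acyclic cover: each $U_x$, having a maximum, carries a sheaf whose higher cohomology vanishes (on a space with a maximum the global-sections functor is exact), and the same holds on every finite intersection since these are of the same form. The \v{C}ech-to-derived-functor spectral sequence then collapses to give $H^n(K^\bullet(\mathbf{G}\upharpoonright X)) \cong H^n(Y;\mathbf{G}\upharpoonright Y) = \lim^n(\mathbf{G}\upharpoonright Y)$. The main obstacle is pinning down the sheaf-theoretic dictionary cleanly so that the \v{C}ech-to-derived-functor machinery genuinely applies in this Alexandrov setting; once this is in place, acyclicity of the cover is essentially immediate from the existence of a maximum on each intersection. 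The author's remark that the treatment is simplified ``through a broader categorical perspective, particularly in reference to sheaves and abelian categories'' suggests this is exactly the intended route. A purely combinatorial alternative, constructing an explicit quasi-isomorphism $K^\bullet(\mathbf{G}\upharpoonright Y) \to K^\bullet(\mathbf{G}\upharpoonright X)$ using a choice function $y \mapsto x_y \in X$ with $y \leq x_y$, is possible but comes with substantially more bookkeeping.
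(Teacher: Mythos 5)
Your proposal is correct and follows essentially the same route as the paper: identify $\lim^n(\mathbf{G}\upharpoonright Y)$ with sheaf cohomology on $Y$ in the Alexandrov topology (the paper cites Jensen for this), recognize $K^\bullet(\mathbf{G}\upharpoonright X)$ as the alternating \v{C}ech complex of the cover $\{U_x \mid x\in X\}$, and use that each finite intersection $U_{x_0}\cap\cdots\cap U_{x_n}=U_{\bigwedge x_i}$ has a maximum (hence is acyclic) to apply Leray's theorem / the collapsing \v{C}ech-to-derived-functor spectral sequence. The only cosmetic difference is that the paper explicitly cites Hartshorne for Leray and Serre for the alternating-versus-ordered \v{C}ech comparison, while you absorb these into the general spectral-sequence machinery.
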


\begin{proof}
An observation in \cite[page 4]{JENSEN} shows that $\lim^n(\mathbf{G}\upharpoonright Y)$ is naturally isomorphic to the sheaf cohomology $H^n(Y,\mathcal{F})$, where the topology on $Y$ is generated by the sets $U_x=\{y\mid y\leq x\}$ for $x\in Y$ and $\mathcal{F}$ is the sheaf $U\mapsto \lim(\mathbf{G}\upharpoonright U)$. 
We now consider the cover $\mathcal{U}=\{U_x\mid x\in X\}$; observe that these sets cover $Y$ and that $U_{x_1}\cap\ldots\cap U_{x_n}=U_{x_1\wedge\ldots\wedge x_n}$. 
Since $U_{x_1\wedge\ldots\wedge x_n}$ has a maximal element, $\lim^s(\mathbf{G}\upharpoonright U_{x_1\wedge\ldots\wedge x_n})=0$ for all $s>0$.
By Fact \ref{Leray}, the $n$th \v Cech cohomology of $\mathcal{U}$ is $\lim^n(\mathbf{G}\upharpoonright Y)$.
Since \v Cech cohomology can be computed using alternating chains by \cite[I.20 Proposition 2]{SERRE}, the proposition follows. 
\end{proof}

\begin{rem}
    The downwards closure of $X$ in Proposition \ref{lim char} can be replaced with any $Y$ satisfying
    \begin{itemize}
        \item $X\subseteq Y$.
        \item all finite meets of elements of $X$ are in $Y$.
        \item every element of $Y$ is below some element of $X$.
    \end{itemize}
\end{rem}

Later in this paper, the $X$ we will be interested in will consist of many mutually generic Cohen sequences; in particular, elements of $X$ will be pairwise incomparable with respect to coordinatewise domination and $X$ will not be closed under finite meets.

From this point on, we will sometimes abuse notation and use $\lim^n(\mathbf{G}\upharpoonright X)$ to refer to $\lim^n(\mathbf{G}\upharpoonright Y)$ for $Y$ the downwards closure of $X$. 
When we write $K^\bullet(\mathbf{G}\upharpoonright X)$, we will always mean that indices are to be taken exclusively from $X$.

The short exact sequence of inverse systems
\[\begin{tikzcd}
    0\arrow[r]&G\arrow[r]&\overline{G}\arrow[r]&\overline{G}/G\arrow[r]&0
\end{tikzcd}\]
naturally gives rise to a short exact sequence of cochain complexes 
\[\begin{tikzcd}
    0\arrow[r]&K^\bullet(G\upharpoonright X)\arrow[r]&K^\bullet(\overline{G}\upharpoonright X)\arrow[r]&K^\bullet(\overline{G}/G\upharpoonright X)\arrow[r]&0
\end{tikzcd}\]
and, by the snake lemma, the long exact sequence of derived limits

\[\begin{aligned}
    0&\longrightarrow \lim G\upharpoonright X&\longrightarrow &\lim\overline{G}\upharpoonright X &\longrightarrow &\lim\overline{G}/G\upharpoonright X \longrightarrow\\ & \longrightarrow\lim{}^1G\upharpoonright X &\longrightarrow &\lim{}^1\overline{G}\upharpoonright X & \longrightarrow &\lim{}^1\overline{G}/G\upharpoonright X\longrightarrow\\&\longrightarrow\cdots
\end{aligned}.\]

\begin{defn} \label{coherent}
Fix $X\subseteq\omega^\kappa$ and a positive integer $n$. 
$\Phi\in K^{n-1}(\overline{G}\upharpoonright X)$ is \emph{$n$-coherent for $\mathcal{G}$} if $d\Phi\in K^{n}(G\upharpoonright X)$. 
Equivalently, the image of $\Phi$ in $K^{n-1}(\overline{G}/G\upharpoonright X)$ is a cocycle. 
\end{defn}
Coherent families represent cocycles in $K^{n}(\overline{G}/G\upharpoonright X)$ and in turn represent elements of $\lim^n(\overline{G}/G\upharpoonright X)$. 
For an $n$-coherent family $\Phi$, we write $[\Phi]$ to mean the element of $\lim^n(\overline{G}/G\upharpoonright X)$ represented by $\Phi$. 

\begin{exa}
Suppose $\mathcal{G}$ is as in Example \ref{A} and $X\subseteq\omega^\kappa$. 
An $n$-coherent family corresponds to an alternating family of functions $\{f_{\mathbf{x}}\mid \mathbf{x}\in X^n\}$ from the area under the graph of $\bigwedge\mathbf{x}$ into $\ZZ$ such that for $\mathbf{x}\in X^{n+1}$, $\sum_{i}(-1)^if_{\mathbf{x}^i}$ is finitely supported below the graph of $\bigwedge\mathbf{x}$. 
Note that a restriction to the area under the graph of $\bigwedge\mathbf{x}$ is required for the domains of these functions to align.
This is the notion of $n$-coherent families used for instance in \cite{SHDLST}. 
\end{exa}

We now give two notions of triviality; they both will provide information about how a coherent family interacts with the above long exact sequence. 
In general, the two notions of triviality differ, but on the class of injective systems we will see that they do not. 

\begin{defn}\label{I-trivial}
Fix $X\subseteq\omega^\kappa$ and a positive integer $n$. 
If $n=1$, a $1$-coherent family $\Phi\in K^0(\overline{G}\upharpoonright X)$ is \emph{type $I$ trivial} if there exists a $\psi\in\lim\overline{G}$ such that for each $f\in X$ there is a finite $S$ such that $\Phi_f\upharpoonright(\kappa\setminus S)=(p_f\psi)\upharpoonright(\kappa\setminus S)$, where $p_f$ is the canonical projection map from $\lim\overline{G}$ to $\overline{G}_f$. 
In this case, we say that $\Phi_f=^*\psi_f$. 

If $n>1$, an $n$-coherent family $\Phi\in K^{n-1}(\overline{G}\upharpoonright X)$ is \emph{type $I$ trivial} if there exists a $\Psi\in K^{n-2}(\overline{G}\upharpoonright X)$ such that for each $\mathbf{x}\in X^n$ there is a finite $S$ such that $\Phi_{\mathbf{x}}\upharpoonright\prod_{i\in \kappa\setminus S} G_{i,\bigwedge\mathbf{x}(i)}=d\Psi_{\mathbf{x}}\upharpoonright\prod_{i\in \kappa\setminus S} G_{i,\bigwedge\mathbf{x}(i)}$. 
Equivalently, $[d\Psi]=[\Phi]$ in $K^{n-1}(\overline{G}/G\upharpoonright X)$.
In this case, we say that $\Phi_{\mathbf{x}}=^*d\Psi_{\mathbf{x}}$. 

Such a $\psi$ or $\Psi$ is referred to a \emph{type $I$ trivialization}.
\end{defn}

\begin{exa}
    Suppose $\mathcal{G}$ is as in Example \ref{A}. 
    Then $\lim\overline{G}$ consists of all functions from $\kappa\times\omega$ to $\ZZ$ so when $n=1$, type $I$ triviality is equivalent to the existence of an $f\colon \kappa\times\omega\to\ZZ$ such that for each $x$, $\Phi_x=^*f\upharpoonright I_x$ for $I_x$ the area under the graph of $x$.
\end{exa}

Note that when $n>1$, $\Phi$ is type $I$ trivial if and only if $[\Phi]=0$ in \[\lim{}^{n-1}(\overline{G}/G\upharpoonright X)\] and when $n=1$, $\Phi$ being type $I$ trivial is equivalent to $[\Phi]$ being in the image of the map 
\[\lim{}^{0}(\overline{G}\upharpoonright X)\to \lim{}^0(\overline{G}/G\upharpoonright X).\]
By subtracting $\Phi$ and the projections of $\psi$ in the $n=1$ case, we see that the $n=1$ case is also equivalent to the existence of a $\Psi=\langle \Psi_f\,\mid\,f\in X\rangle$ such that $d\Psi=0$ and $\Psi_{f}=^*\Phi_{f}$. 

For general $\Om_{\kappa}$ systems, we cannot hope for every $n$-coherent family to have a type $I$ trivialization; however, type $I$ triviality is the sort of triviality which we can propagate from small subsets of $\omega^\kappa$ to larger subsets of $\omega^\kappa$.
Another sort of triviality which can hold, and was used in \cite{ASH}, is the following.

\begin{defn} \label{II-trivial}
Fix $X\subseteq\omega^\kappa$ and a positive integer $n$. 
An $n$-coherent family $\Phi\in K^{n-1}(\overline{G}\upharpoonright X)$ is \emph{type $II$ trivial} if there exists a $\Psi\in K^{n-1}(G\upharpoonright X)$ such that $d\Phi=d\Psi$.
Such a $\Psi$ is referred to as a \emph{type $II$ trivialization}.
\end{defn}

Chasing through the proof of the snake lemma, $\Phi$ being type $II$ trivial is equivalent to the image of $\Phi$ under the connecting homomorphism from $\lim^{n-1}\overline{G}/G$ to $\lim^nG\upharpoonright X$ being $0$. 
Note that $\Phi$ being type $I$ trivial implies that $\Phi$ is type $II$ trivial: if $n>1$ and $\Psi$ is a type $I$ trivialization then $\Phi-d\Psi$ is a type $II$ trivialization if $n=1$ and $\Psi$ is a type $I$ trivialization then $\Phi-\Psi$ is a type $II$ trivialization. However, for general $\Om_\kappa$ systems there can be coherent families which are type $II$ trivial but not type $I$ trivial as the following example shows:
\begin{exa}
    Consider the $\Omega_\omega$ system with each $G_{n,k}=\ZZ$ and each $p_{n,k+1,k}$ given by multiplication by $2$. We saw earlier a short exact sequence of towers of this form where $\lim$ failed to be exact given by \begin{center}
\begin{tikzcd}
            \vdots\arrow[d]& \vdots \arrow[d]                           & \vdots \arrow[d]                    & \vdots \arrow[d]                   & \vdots\arrow[d]  \\
0 \arrow[r]\arrow[d] & \mathbb{Z} \arrow[r, "2^n"] \arrow[d, "2"] & \mathbb{Z} \arrow[r] \arrow[d, "1"] & \mathbb{Z}/2^n\ZZ \arrow[r] \arrow[d] & 0\arrow[d] \\
            \vdots\arrow[d]& \vdots \arrow[d, "2"]                      & \vdots \arrow[d, "1"]               & \vdots \arrow[d]                   &\vdots\arrow[d]   \\
0 \arrow[r]\arrow[d] & \mathbb{Z} \arrow[d, "2"] \arrow[r, "4"]   & \mathbb{Z} \arrow[d, "1"] \arrow[r] & \mathbb{Z}/4\ZZ \arrow[d] \arrow[r]   & 0\arrow[d] \\
0 \arrow[r] & \mathbb{Z} \arrow[r, "2"]                  & \mathbb{Z} \arrow[r]                & \mathbb{Z}/2\ZZ \arrow[r]             & 0
\end{tikzcd}
\end{center}
All higher derived limits of a constant system vanish (for instance as an easy application of \cite[Theorem 14.9]{SSH}) so exactness implies that the first derived limit of the leftmost tower should be $\mathbb{Z}_2/\mathbb{Z}$ for $\mathbb{Z}_2$ the $2$-adic integers. 
Chasing through the Snake lemma to produce a representative of a nonzero element of $\lim^1$ and copying it in all columns yields the following example of a coherent family. 

    Fix a sequence of integers $\langle a_i\mid i<\omega\rangle$ such that 
    \begin{itemize}
        \item if $i<j$ then $a_i\equiv a_j\mod 2^i$;
        \item there is no $b\in\mathbb{Z}$ such that $b\equiv a_i\mod 2^i$ for each $i$. 
    \end{itemize}
    For $i\leq j$, let $b_{ij}=\frac{a_j-a_i}{2^i}$ and $b_{ji}=-b_{ij}$. 
    Let $\Phi$ be the family given by $\Phi_{xy}(i)=b_{x(i)y(i)}$; an easy computation shows that $d\Phi=0$. 
    In particular, $\Phi$ is 2-coherent and type $II$ trivial with a trivialization given by the constant $0$ system. 
    We show that $\Phi$ is not type $I$ trivial.

    Suppose otherwise and let $\Psi$ be a type $I$ trivialization.
    Let $X\subseteq\omega^\omega$ be $\leq^k$ cofinal for some $k$ such that whenever $x\in X$ and $i\geq k$, $2^{x(i)}\Psi_x(i)-\Psi_{\underline{0}}(i)=b_{0,x(i)}$ for $\underline{0}$ the constant $0$ function. 
    Let $A=\{x(k)\mid x\in X\}$ and pick for each $i\in A$ some $x_i\in X$ with $x_i(k)=i$; note that $A$ is infinite. 
    Moreover, whenever $i\leq j$ are in $A$, 
    \[\begin{aligned}
        2^{j-i}\Psi_{x_j}(k)-\Psi_{x_i}(k)&=\frac{1}{2^i}\left(2^j\Psi_{x_j}(k)-2^i\Psi_{x_i}(k)\right)\\
        &=\frac{1}{2^i}\left(2^j\Psi_{x_j}(k)-\Psi_{\underline{0}}(k)-2^i\Psi_{x_i}(k)+\Psi_{\underline{0}}(k)\right)\\
        &=\frac{1}{2^i}\left(a_j-a_0-a_i+a_0\right)\\
        &=b_{ij}.
    \end{aligned}\]

    Since
    $b_{ij}$ represents a nonzero element of $\lim^1$ for the tower with all groups $\ZZ$ and all maps given by multiplication by $2$, $b_{ij}$ is not a coboundary along any cofinal subset of $\omega$. That is, whenever $B\subseteq\omega$ is infinite and $\langle c_i\mid i\in B\rangle$ are integers, there are $i<j$ such that $2^{j-i}c_j-c_i\neq b_{ij}$. 
    But setting $B$ to be the $A$ of the preceding paragraph and setting $c_i=\Psi_{x_i}(k)$ yields the desired contradiction. 
\end{exa}
When $n=1$, however, if $\Psi$ is a type $II$ trivialization of $\Phi$ then we may glue together the group elements appearing in $\Phi-\Psi$ to obtain a type $I$ trivialization, leading to the following fact.
\begin{fact} \label{trivequiv}
    If $n=1$ and $\Phi$ is $n$-coherent then $\Phi$ is type $I$ trivial if and only if $\Phi$ is type $II$ trivial. 
\end{fact} 

More generally, the only problem arises when $n=2$; we will not need this remark and simply state it without proof. 
The special case for $\Omega_\omega$ systems follows easily from \cite[Lemma 5]{ASH} and the more general case follows the same proof. 
\begin{rem}
    If $n\neq 2$ and $\Phi$ is $n$-coherent then $\Phi$ is type $I$ trivial if and only if $\Phi$ is type $II$ trivial.
\end{rem}

Our overall plan, as was done in \cite{SVHDLwLC} for the system $\mathbf{A}$, is to obtain a type $II$ trivialization over a large set and then propagate the trivialization to obtain that the entire system is type $I$ trivial. 
In general, there can be coherent families with type $II$ trivializations but no type $I$ trivializations.
However, there is a rich class of systems, namely the injective systems, where the questions of whether a coherent family has a type $I$ trivialization and whether the same coherent family has a type $II$ trivialization are equivalent (see Lemma \ref{equiv}).

\subsection{Injective Systems}
Several features of the system $\mathbf{A}$ are used heavily in \cite{SVHDLwLC}, but do not apply to arbitrary $\Om_\kappa$ systems, even when $\kappa=\omega$. 
In this section, we analyze the injective objects among the $\Om_\kappa$ systems, which do share these features with the system $\mathbf{A}$. 
Of particular interest are

\begin{itemize}
\item for injective $\Om_\kappa$ systems, the two notions of when a coherent family is trivial are equivalent (see Lemma \ref{equiv}).

    \item additivity of derived limits for injective systems is equivalent to vanishing of derived limits, which is again equivalent to every coherent family being trivial (see Corollary \ref{lim_vanish}).
    
    \item there is a well-behaved notion of ``extension by 0,'' allowing us to extend the analysis of coherent families to coherent families below some fixed $x$ (see Lemma \ref{concrete inj} and Proposition \ref{trivial_below}).
\end{itemize}

The collection of $\Om_\kappa$ systems forms an abelian category which we denote by $\Om_\kappa$ sys, with a morphism $\varphi$ between two systems $\mathcal{G}$ and $\mathcal{H}$ consisting of group homomorphisms $\varphi_{\alpha,k}\colon G_{\alpha,k}\to H_{\alpha,k}$ commuting with the structure maps of $\mathcal{G}$ and $\mathcal{H}$. 
Moreover, each $\varphi_{\alpha,k}$ is injective if and only if $\varphi$ is monic in the sense that if $\mathcal{K}$ is any other $\Om_\kappa$ system and $\psi_1,\psi_2\colon\mathcal{K}\to \mathcal{G}$ satisfy $\varphi\circ\psi_1=\varphi\circ\psi_2$ then $\psi_1=\psi_2$ (see, for instance, \cite[Theorem 11.2]{SSH}). 
In any abelian category, we may identify the class of injective objects; in the class of $\Omega_\kappa$ systems injective objects are as follows: 

\begin{rem}
An $\Om_\kappa$ system $\mathcal{I}$ is injective if whenever $\mathcal{G},\mathcal{H}$ are $\Om_\kappa$ systems and $\varphi\colon\mathcal{G}\to\mathcal{I}$ and $\psi\colon\mathcal{G}\to\mathcal{H}$ are maps of $\Om_\kappa$ systems such that each $\psi_{\alpha,k}$ is injective, there is a $\overline{\varphi}\colon\mathcal{H}\to\mathcal{I}$ such that $\overline{\varphi}\psi=\varphi$. 
\end{rem}

Note that there is a sensible notion of injective objects for $\mathrm{Ab}^\mathbb{P}$ for any poset $\mathbb{P}$, which we will make use of in Lemma \ref{propagate}. 
See, for instance, \cite[Chapter 11]{SSH} for more details on the general theory of such objects.

Injective objects play an important role in the theory of derived functors; see, for instance, \cite[Section 2.3]{Weibel}. 
The injective objects in abelian groups consist precisely of the \emph{divisible} abelian groups; that is, the groups where for each group element $x$ and each $n\in\ZZ\setminus\{0\}$ there is a $y$ such that $ny=x$.

We will prove the class of injective towers of abelian groups admits the following concrete characterization; note that an $\Om_\kappa$ system is injective if and only if each of its towers is.

\begin{lem} \label{concrete inj}
    Suppose $\mathbf{G}=\langle G_i,p_{i}\mid i<\omega\rangle$ is a tower of abelian groups. 
    $\mathbf{G}$ is injective if and only if 
    \begin{itemize}
        \item $G_0$ is a divisible abelian group.
        \item for each $i<\omega$, $p_i$ is surjective.
        \item for each $i<\omega$, $\ker(p_i)$ is a divisible abelian group.
    \end{itemize}
\end{lem}
\begin{proof}
    We first show these conditions are sufficient for injectivity. 
    Fix $\mathbf{H},\mathbf{H}'$ towers of abelian groups, an $f\colon \mathbf{H}\to\mathbf{H}'$ with each $f_{\alpha,k}$ injective, and a $g\colon\mathbf{H}\to\mathbf{G}$. 
    Consider the poset of pairs $(\mathbf{G}^*,g^*)$ where $\mathbf{G}^*$ is a subsystem of $\mathbf{H}'$ containing the image of $\mathbf{H}$ under $f$ and the following diagram commutes:
    \[
    \begin{tikzcd}
        H\arrow{rr}{g}\arrow{dr}{f}&&G\\
        &G^*\arrow{ur}[swap]{g^*}&
    \end{tikzcd}\] 
    ordered in the obvious way. 
    This poset is readily verified to satisfy the hypotheses of Zorn's lemma, so let $(\mathbf{M},h)$ be a maximal element; say $\mathbf{M}=\langle M_i,m_i\mid i<\omega\rangle$. 
    We claim that $\mathbf{M}=\mathbf{H}'$. 
    Suppose not and let $n$ be minimal such that $M_n\neq H_n'$. 
    There are two cases. 
    \begin{itemize}
        \item Case 1: there is some $0<k<\omega$ and $x\in H_n'\setminus M_n$ such that $kx\in M_n$. 
        Then fix such an $x,k$ for the least such value of $k$. 
        If $n>0$ then using the surjectivity of $p_{n-1}$ fix a $y$ such that $p_{n-1}(y)=h_{n-1}(m_{n-1}(x))$; note that $m_{n-1}(x)\in\dom(h_{n-1})$ by minimality of $n$. 
        Then $ky-h_n(kx)\in\ker(p_{n-1})$ so by hypothesis there is a $z\in\ker(p_n)$ such that $kz=ky-h_n(kx)$. 
        We now have the following diagram:
        \[\begin{tikzcd}
            M_n\oplus\ZZ\arrow{rr}{h_n\oplus(y-z)}\arrow{dr}{id\oplus x}&&G_n\\
            &M_n+\ZZ x&
        \end{tikzcd}.\]
        Since $id\oplus x$ is a surjective group homomorphism, $id\oplus x$ is a quotient map and since $h\oplus(y-z)$ vanishes on $\ker(id\oplus x)=\{(k\ell x,-k\ell)\mid\ell\in \ZZ\}$, there exists a unique group homomorphism $h_n^+\colon M_n+\ZZ x\to G$ making the diagram
        \[\begin{tikzcd}
            M_n\oplus\ZZ\arrow{rr}{h_n\oplus(y-z)}\arrow{dr}{id\oplus x}&&G_n\\
            &M_n+\ZZ\arrow{ur}{h_n^+}x&
        \end{tikzcd}.\]
        commute. 
        Moreover, whenever $u\in M_n$ and $\ell\in\ZZ$, 
        \[\begin{aligned}
            h_{n-1}m_{n-1}(u+\ell x)&=h_{n-1}m_{n-1}(u)+h_{m-1}m_{n-1}(\ell x)\\
            &=p_{n-1}h_n(u)+p_{n-1}(\ell y)\\
            &=p_{n-1}h_n^+(u)+p_{n-1}(\ell(y-z))\\
            &=p_{n-1}h_n^+(u)+p_{n-1}h_{n}^+(\ell x)\\
            &=p_{n-1}h_n^+(u+\ell x).
        \end{aligned}\]
        where the first equality is that $h_{n-1}$ and $m_{n-1}$ are homomorphisms, the second equality is that the maps $m_i$ already formed a map of systems and the choice of $y$, the third equality is that $h_n^+$ agrees with $h_n$ on $M_n$ and that $z\in\ker(p_{n-1})$, the fourth is the definition of $h_n^+(x)$, and the last equality is that $p_{n-1}$ and $h_n^+$ are group homomorphisms. 
        In particular, the system $\mathbf{M}^+$ with 
        \[\mathbf{M}^+_k=\left\{\begin{array}{cc}
             M_k&k\neq n  \\
             M_n+\ZZ x&k=n 
        \end{array}\right.\]
        and maps $h_k$ for $k\neq n$ and $h_n^+$ for $k=n$ is in the poset and strictly greater than $(\mathbf{M},h)$ contradicting the maximality of $(\mathbf{M},h)$.
        
        If $n=0$ in this case, set $y=0$ and find any $z$ satisfying $kz=h_0(kx)$ using divisibility of $\mathbf{G}_0$.
        Then identical reasoning shows that we may extend to a strictly larger system by declaring $h_n^+(x)=z$. 

        \item Case 2: otherwise. 
        Then fix any $x\not\in\dom(h_n)$; then $h$ can be extended to the subsystem generated by $M,x$ by setting $h_n(x)=y$ for any $y$ such that if $n>0$ then $p_{n-1}(y)=h_{n-1}m_{n-1}(x)$; such a $y$ exists since $p_{n-1}$ is surjective (note that if $n=0$ we place no requirements on $y$). 
        This is well defined since the canonical map $M_n\oplus\ZZ\to M_n+x$ is an isomorphism in this case. 
        
    \end{itemize}
    This completes the proof that these conditions are sufficient for injectivity. 

    To see that these conditions are necessary, we show each bullet point. 
    If some $x\in G_0$ is not divisible by some $0<k<\omega$, let $\mathbf{H}$ be the system with 
    \[H_i=\left\{\begin{array}{cc}
         (G_0\oplus\ZZ)/\langle x,-k\rangle&i=0  \\
         G_i&\text{otherwise} 
    \end{array}\right.\]
    with maps induced by the $p_i$. 
    Then the identity from $\mathbf{G}$ to $\mathbf{G}$ does not extend to a map from $\mathbf{H}$ to $\mathbf{G}$ as the image of $(0,1)$ in any extension $i$ would satisfy $ki(0,1)=x$. 

    For the second bullet point, suppose $x\in G_i$ is not in the image of $p_i$. 
    Let $\mathbf{H}$ be the system with 
    \[H_n=\left\{\begin{array}{cc}
         G_n\oplus\ZZ&n=i+1  \\
         G_n&\text{otherwise}
    \end{array}\right.\]
    and maps given by $p_n$ if $n\neq i$ and if $n=i$ then the map restricting to $p_i$ on $G_{i+1}$ and mapping $1\in\ZZ$ to $x$. 
    Then the identity map on $\mathbf{G}$ does not extend to a map from $\mathbf{H}$ to $\mathbf{G}$: if $j$ were such an extension, we would need $x=p_ij_{i+1}(0,1)$ contrary to choice of $x$. 

    Finally, suppose for some $x\in\ker(p_i)$ there is a $k$ such that no $y\in\ker(p_i)$ satisfies $ky=x$. 
    Let $\mathbf{H}$ be the system with 
    \[H_n=\left\{\begin{array}{cc}
         (G_n\oplus\ZZ)/\langle (x,-k)\rangle&n=i+1  \\
         G_n&\text{otherwise} 
    \end{array}\right.\]
    with maps given by $p_n$ if $n\neq i$ and induced from the map $(z,\ell)\mapsto p_{i}(z)$ if $x=i-1$. 
    Then the identity map on $\mathbf{G}$ does not extend to a map from $\mathbf{H}$ to $\mathbf{G}$: if $j$ were any such extension, we would need $j(0,1)\in\ker(p_i)$ and $kj(0,1)=x$ contrary to choice of $x$.
\end{proof}

We are led to a particularly nice structure theorem for injective towers.
\begin{cor} \label{inj iso}
    A tower of abelian groups $\mathbf{G}=\langle G_i,p_i\mid i<\omega\rangle$ is injective if and only if there are divisible abelian groups $\langle H_i\mid i<\omega\rangle$ and isomorphisms $f_i\colon G_i\to \prod_{j\leq i}H_i$ such that whenever $i<j$, the following diagram commutes for the right hand map the natural projection map:
    \[
\begin{tikzcd}
G_i \arrow[r, "f_i"] \arrow[d, "p_{ij}"] & \prod_{k\leq i}H_k \arrow[d] \\
G_j \arrow[r, "f_j"]                     & \prod_{k\leq j}H_k             
\end{tikzcd}
\]
\end{cor}
\begin{proof}
    The existence of such $H_i$, $f_i$ trivially ensures the conditions of Lemma \ref{concrete inj}. 
    Conversely, suppose $\mathbf{G}$ is injective. 
    Set $H_i=\ker(p_i)$; we construct $f_i$ recursively on $i$.
    When $i=0$, $G_0=H_0$ and we let $f_0$ be the identity map. 
    Now suppose $i>0$.
    Since $\ker(p_i)$ is a divisible abelian group and $p_i$ is surjective by Lemma \ref{concrete inj}, there is a split exact sequence
    \[0\longrightarrow \ker(p_i)\longrightarrow G_{i+1}\longrightarrow G_i\longrightarrow 0.\]. 
    In particular, there is the desired isomorphism $f_{i+1}\colon G_{i+1}\cong G_i\oplus \ker(p_i)$ making the diagram
    \[
\begin{tikzcd}
G_{i+1} \arrow[rr, "f_{i+1}"] \arrow[rd, "p_i"] &     & G_i\oplus\ker(p_i) \arrow[ld] \\
                                                & G_i &                              
\end{tikzcd}\]
commute (see \cite[Exercise 1.4.2]{Weibel}).
\end{proof}

We now record some facts about injective objects within $\Om_{\kappa}$ systems. 
The following is a standard fact about categories of the form $\mathrm{Ab}^\mathbb{P}$ for any poset $\mathbb{P}$.

\begin{lem}
The category of $\Om_\kappa$ systems has enough injective objects. 
That is, whenever $\mathcal{G}$ is an $\Om_\kappa$ system there is an injective system $\mathcal{I}$ and a monic $\varphi\colon\mathcal{G}\to\mathcal{I}$.
\end{lem}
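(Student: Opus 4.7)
The plan is to use the concrete characterization in Lemma~\ref{concrete inj} to build an injective envelope by hand. Since morphisms of $\Om_\kappa$ systems are specified coordinatewise in $\alpha$ and the structure maps $p_{\alpha,j,k}$ live entirely within a single $\alpha$, the category of $\Om_\kappa$ systems is by inspection the product category $(\Ab^\omega)^\kappa$ of $\kappa$ copies of inverse sequences of abelian groups.  It therefore suffices to embed a single inverse sequence $G_0\leftarrow G_1\leftarrow G_2\leftarrow\cdots$ of abelian groups into one satisfying the three bulleted properties of Lemma~\ref{concrete inj}; doing this for each $\alpha<\kappa$ separately and assembling coordinatewise yields the desired injective $\mathcal{I}$ containing $\mathcal{G}$.

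For the single-sequence case I would first fix injections $e_k\colon G_k\hookrightarrow D_k$ with $D_k$ a divisible abelian group, which is possible because $\Ab$ has enough injectives.  Letting $q^k_j\colon G_k\to G_j$ denote the composite structure map for $j\leq k$, I would then set
\[ I_k := \prod_{j\leq k} D_j,\qquad p_k\colon I_{k+1}\to I_k \text{ the canonical projection,}\qquad \iota_k(g) := \bigl(e_j(q^k_j(g))\bigr)_{j\leq k}. \]
The map $\iota_k$ commutes with the structure maps by construction and is injective because its last coordinate (the $j=k$ entry) is. The key feature of this choice is that recording the entire tower of preimages, rather than just a single divisible hull $D_k$, is what makes the third condition of Lemma~\ref{concrete inj} tractable.

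The three conditions are then straightforward to verify.  Each $p_k$ is surjective as the projection of a product, and each $I_k$ is a finite product of divisibles, hence divisible.  For the lifting condition, given $x=(x_j)_{j\leq n+1}\in I_{n+1}$, $y=(y_j)_{j\leq n}\in I_n$, and $\ell\in\ZZ$ with $\ell y=p_n(x)$, one reads off $\ell y_j=x_j$ for $j\leq n$; setting $z_j:=y_j$ for $j\leq n$ and using divisibility of $D_{n+1}$ to choose $z_{n+1}\in D_{n+1}$ with $\ell z_{n+1}=x_{n+1}$ produces $z\in I_{n+1}$ with $\ell z=x$ and $p_n(z)=y$.  The only obstacle I expect is hitting on this particular model for $I_k$: a naive $I_k=D_k$ fails the third condition, since a preimage under $p_n$ of a lift need not itself lift compatibly, and the product over $j\leq k$ records exactly enough history to coordinate the lifts via the separate divisibilities of the $D_j$.
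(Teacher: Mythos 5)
Your proof is correct, but it takes a genuinely different route than the paper, which simply cites Marde\v{s}i\'{c} (\cite[Theorem 11.18]{SSH}) for the general fact that functor categories of the form $\Ab^\Lambda$ have enough injectives. The reduction to the product category $(\Ab^\omega)^\kappa$ is the right observation, and the explicit construction $I_k=\prod_{j\le k}D_j$ with the diagonal-style embedding $\iota_k(g)=(e_j(q^k_j(g)))_{j\le k}$ is a clean way to produce a split tower of divisibles, which is exactly what the concrete characterization of Lemma~\ref{concrete inj} asks for. The verification of the three conditions goes through as you say: the projections are (split) surjective, finite products of divisibles are divisible, and the lifting condition reduces to solving a single divisibility equation in the new coordinate $D_{n+1}$ because the first $n+1$ coordinates of the lift $z$ are forced to equal $y$. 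The one technicality worth noting is the $\ell=0$ case of the third condition, but this is an imprecision already present in the statement of Lemma~\ref{concrete inj} (the condition should be read for nonzero $\ell$, as divisibility concerns nonzero integers), so it is not a defect of your argument. What your approach buys over the paper's is self-containedness and a concrete model of the injective envelope, which makes the structure of injective $\Om_\kappa$ systems transparent; what the paper's citation buys is brevity and generality, since the result in \cite{SSH} applies to $\Ab^\Lambda$ for an arbitrary small indexing category $\Lambda$, not just $\omega^{\mathrm{op}}$.
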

See, for instance, \cite[Theorem 11.18]{SSH} for the general proof. 

Recall that to any $\Om_\kappa$ system $\mathcal{G}$ we may associate two inverse systems $G$ and $\overline{G}$ indexed over $\omega^\kappa$; for injective systems $\mathcal{I}$ we denote these $I$ and $\overline{I}$.
One feature of the system $\mathbf{A}$ of Example \ref{A} used prominently in \cite[Proposition 4.2]{SVHDLwLC} is that there are compatible maps upwards from $\overline{G}_f$ to $\overline{G}_g$ for each $f\leq g$, namely by extending a function from the area under the graph of $f$ to $\ZZ$ to a function from the area under the graph of $g$ to $\ZZ$ by writing $0$ in all undefined positions. 
The next lemma will allow us to emulate the procedure of extension by $0$ for injective systems.

\begin{cor} \label{extend_by_0}
Suppose $\mathcal{I}$ is an injective $\Om_\kappa$ system. 
There is a family $\{i_{fg}\colon\overline{I}_f\to\overline{I}_g\mid f\leq g\}$ of group homomorphisms such that
\begin{itemize}
    \item $p_{gf}i_{fg}=\mathrm{id}_{\overline{I_f}}$;
    \item if $x\in I_f$ then $i_{fg}(x)\in I_g$.
\end{itemize}

\end{cor}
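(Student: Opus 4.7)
The plan is to build each $i_{fg}$ coordinate-wise in $\alpha$, which reduces the problem to producing, for each $\alpha<\kappa$ and each $k<\omega$, a section $s_{\alpha,k}\colon I_{\alpha,k}\to I_{\alpha,k+1}$ of the one-step structure map $p_{\alpha,k+1,k}$. Once these are in hand, I will compose them to obtain sections $s_{\alpha,j,m}=s_{\alpha,m-1}\circ\cdots\circ s_{\alpha,j}\colon I_{\alpha,j}\to I_{\alpha,m}$ of $p_{\alpha,m,j}$ for any $j\leq m<\omega$, then set $i_{fg}=\prod_{\alpha<\kappa} s_{\alpha,f(\alpha),g(\alpha)}\colon\overline{I}_f\to\overline{I}_g$.

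The main step is the construction of the one-step sections, and I will obtain them from the concrete characterization in Lemma \ref{concrete inj}. Let $K=\ker p_{\alpha,k+1,k}$. Applying the third bullet of that lemma to an arbitrary $x\in K$ with $y=0$ and any $m\in\ZZ_+$ in the role of the integer $k$ there, one has $my=0=p_{\alpha,k+1,k}(x)$, so the lemma produces some $z\in I_{\alpha,k+1}$ with $mz=x$ and $p_{\alpha,k+1,k}(z)=0$. Thus $z\in K$ divides $x$ by $m$ inside $K$, so $K$ is a divisible abelian group. Since divisible groups are injective in $\Ab$, the short exact sequence
\[0\to K\to I_{\alpha,k+1}\xrightarrow{p_{\alpha,k+1,k}} I_{\alpha,k}\to 0\]
splits via a retraction extending $\id_K$, yielding the desired section $s_{\alpha,k}$.

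The remaining verifications are routine. The identity $p_{\alpha,m,j}\circ s_{\alpha,j,m}=\id_{I_{\alpha,j}}$ follows by telescoping cancellation using $p_{\alpha,\ell+1,\ell}\circ s_{\alpha,\ell}=\id$ at each step, and then $p_{gf}\circ i_{fg}=\id_{\overline{I}_f}$ is immediate after unpacking $p_{gf}=\prod_\alpha p_{\alpha,g(\alpha),f(\alpha)}$. For the support condition, since each $s_{\alpha,j,m}$ is a group homomorphism and so fixes $0$, if $x\in I_f$ has finite support $S\subseteq\kappa$, then the support of $i_{fg}(x)$ is contained in $S$, placing $i_{fg}(x)\in I_g$. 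The only nontrivial obstacle is isolating the correct consequence of Lemma \ref{concrete inj}, namely that the third bullet specializes at $y=0$ to divisibility of $\ker p_{\alpha,k+1,k}$, after which the injectivity of divisible groups in $\Ab$ finishes the argument.
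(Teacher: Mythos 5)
Your proof is correct and takes essentially the same route as the paper: obtain a one-step section $I_{\alpha,k}\to I_{\alpha,k+1}$ by splitting the short exact sequence using injectivity of the divisible kernel $\ker p_{\alpha,k+1,k}$, then compose and take the product over $\alpha$. The only difference is that you spell out why the kernel is divisible (the $y=0$ specialization of the third bullet of Lemma~\ref{concrete inj}), which the paper asserts without elaboration; this is a worthwhile addition since a kernel of a surjection of divisible groups need not be divisible in general, so that step genuinely depends on the third bullet.
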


\begin{proof}
By Corollary \ref{inj iso}, we may fix groups $\langle H_{\alpha,i}\mid \alpha<\kappa,i<\omega\rangle$ and isomorphisms $h_{\alpha,i}\colon I_{\alpha,i}\cong\prod_{j\leq i}H_j$ commuting with the natural projection maps. 
For $i<j$ and $\alpha<\kappa$, let $\ell_{\alpha,i,j}\colon \prod_{k\leq }H_{\alpha,i}\to \prod_{k\leq j}H_{\alpha,j}$ be the map given by extending by $0$. 
Then setting \[i_{fg}=\prod_{\alpha<\kappa}h_{\alpha,g(\alpha)}^{-1}\circ \ell_{\alpha,f(\alpha),g(\alpha)}\circ h_{\alpha,f(\alpha)}\]
will do.
\end{proof}

We now turn to establishing that the two forms of triviality are equivalent for injective systems. 
This will allow us to emulate the ``propagating trivializations'' step of \cite[Section 4]{SVHDLwLC}. 

\begin{lem} \label{pullback}
Suppose that $I\in\Ab^\omega$ is injective and $X\subseteq\omega^\kappa$. 
Then for each $\alpha<\kappa$ the system $I(\alpha)$ indexed over $X$ defined by $I(\alpha)_x=I_{x(\alpha)}$ is injective in $\Ab^X$.
\end{lem}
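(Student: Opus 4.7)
Plan: The strategy is to express $I_\alpha$ as the pull-back of $I$ along the order-preserving projection $\pi_\alpha \colon X \to \omega$, $x \mapsto x(\alpha)$. Order preservation is immediate: if $x \leq y$ in $X$ then $x(\alpha) \leq y(\alpha)$ in $\omega$. By construction $I_\alpha = \pi_\alpha^* I$, so the lemma reduces to showing that the pull-back functor $\pi_\alpha^* \colon \Ab^\omega \to \Ab^X$ preserves injective objects.

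The clean categorical route is to use the adjoint triple $\pi_{\alpha,!} \dashv \pi_\alpha^* \dashv \pi_{\alpha,*}$ of Kan extensions along $\pi_\alpha$. A standard abelian-category argument (any right adjoint to an exact functor preserves injective objects; see for instance Chapter 11 of \cite{SSH}) shows that $\pi_\alpha^*$ preserves injectives provided the left Kan extension $\pi_{\alpha,!}$ is exact. I would verify this by computing $\pi_{\alpha,!}F(k)$ explicitly as a colimit of $F$ over the appropriate sub-poset of $X$ cut out by $\pi_\alpha$, and exploiting the fact that $\omega$ is linearly ordered to stratify the computation into pieces whose colimits are exact in $\Ab$.

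The main obstacle is establishing this exactness, since $X$ need not be directed as a sub-poset of $\omega^\kappa$. As a more elementary fallback I would mirror the Zorn's lemma argument used in Lemma \ref{concrete inj}: given a monomorphism $F \hookrightarrow G$ in $\Ab^X$ and a morphism $\varphi \colon F \to I_\alpha$, maximally extend to $\mu \colon M \to I_\alpha$ with $F \subseteq M \subseteq G$; if $M \neq G$, pick $g \in G(x_0) \setminus M(x_0)$ and determine a value $\mu(g) \in I_{x_0(\alpha)}$ subject to the finitely many compatibility constraints coming from the structure maps of the subfunctor of $G$ generated by $M$ and $g$. The surjectivity of the transition maps of $I$, the divisibility of each $I_k$, and the third lifting property isolated in Lemma \ref{concrete inj} combine to produce such a $\mu(g)$, contradicting maximality and completing the proof.
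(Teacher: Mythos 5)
Your primary route---realizing $I_\alpha$ as $\pi_\alpha^*I$ and invoking the adjoint triple $\pi_{\alpha,!}\dashv\pi_\alpha^*\dashv\pi_{\alpha,*}$, then reducing to exactness of $\pi_{\alpha,!}$---is precisely the paper's argument, and the worry you raise is the right one: the paper's proof asserts that $(\pi_\alpha)_!G(k)=\varinjlim_{\substack{x\in X\\ x(\alpha)\geq k}}G_x$ is exact without checking it. In fact the lemma as literally stated fails for arbitrary $X\subseteq\omega^\kappa$. Take $X=\{a,b,c\}$ with $a,b<c$ and $a\perp b$ (easily realized in $\omega^\kappa$) and let $I$ be the constant injective system $\mathbb{Q}$. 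Then $I_\alpha$ is the span $\mathbb{Q}\xleftarrow{\mathrm{id}}\mathbb{Q}\xrightarrow{\mathrm{id}}\mathbb{Q}$ in $\Ab^X$, and the morphism $\psi$ from $(\mathbb{Q}\leftarrow 0\to\mathbb{Q})$ to $I_\alpha$ that is the identity at $a$, zero at $b$, and (forced) zero at $c$ does not extend along the monomorphism $(\mathbb{Q}\leftarrow 0\to\mathbb{Q})\hookrightarrow(\mathbb{Q}\xleftarrow{\mathrm{id}}\mathbb{Q}\xrightarrow{\mathrm{id}}\mathbb{Q})$: compatibility would force the extension at $c$ to be simultaneously $\mathrm{id}$ and $0$. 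This shows your Zorn-lemma fallback cannot succeed either---a maximal partial extension will exist and then get stuck at exactly such a $c$, because the constraints arriving at one node of an undirected $X$ from two incomparable neighbours can be genuinely contradictory; the three conditions in Lemma \ref{concrete inj} only control compatibility along a single $\omega$-chain.

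What makes the lemma true where it is actually used is that $\lim^n(\mathbf{G}\upharpoonright X)$ in this paper abbreviates $H^n(K^\bullet(\mathbf{G}\upharpoonright X))$, which by Proposition \ref{lim char} is $\lim^n$ over the downward closure $Y$ of $X$; accordingly Corollary \ref{lim_vanish}, the sole consumer of this lemma, is applied to the downward-closed $Y$ in the lemma that follows it. For downward-closed $X$ one has $x\wedge y\in X$ whenever $x,y\in X$, hence each $\{x\in X\mid x(\alpha)\geq k\}$ is codirected, the comma category computing the Kan extension is filtered, and filtered colimits in $\Ab$ are exact. So the missing step in both the paper's proof and yours is this filteredness observation, and the clean fix is to add the hypothesis that $X$ is downward closed (equivalently, $\wedge$-closed) to the statement; neither the Kan-extension route in general nor the Zorn route goes through without it.
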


\begin{proof}
Let $\pi_\alpha^*\colon\Ab^\omega\to\Ab^X$ be the functor with $\pi_\alpha^*(\mathbf{G})(x)=G_{x(\alpha)}$; then $\pi_\alpha^*(I)=I(\alpha)$. 
The key point is that
$\pi_\alpha^*$ has a left adjoint $(\pi_\alpha)_!$ which is exact; see \cite[Lemma 14.11]{SSH}.
Therefore, since both $\Ab^\omega$ and $\Ab^X$ are abelian categories, $\pi_\alpha^*$ preserves injective objects by \cite[Proposition 2.3.10]{Weibel}.
\end{proof}

Recall that we use $\limn{n}(\overline{I}\upharpoonright X)$ as a slight abuse of notation for the derived limits of the system indexed by the downwards closure of $X$, which is by Proposition \ref{lim char} the cohomology group $H^{n}(K^\bullet(\overline{I}\upharpoonright X))$.
\begin{cor} \label{lim_vanish}
Suppose that $\mathcal{I}=\langle I_{\alpha,k},p_{\alpha,j,k}\rangle$ is an injective object in the category of $\Omega_{\kappa}$ systems. 
Then for all $X\subseteq\omega^\kappa$ and $n>0$
\[\lim{}^n(\overline{I}\upharpoonright X)=0.\]
\end{cor}

\begin{proof}
Let $Y$ be the downwards closure of $X$.
Note that for each $\alpha$, $\mathcal{I}_\alpha$ is injective in $\Ab^\omega$. 
Therefore, each $\pi_\alpha^*(\mathcal{I}_\alpha)$ is injective in $\Ab^Y$ by Lemma \ref{pullback}. 
In particular, noting that $\overline{I}\upharpoonright Y=\prod_\alpha\pi_\alpha^*(\mathcal{I}_\alpha)$ where the product is taken in $\mathrm{Ab}^Y$ and that a product of injective objects is injective (in any category), $\overline{I}\upharpoonright Y$ is an injective object, so derived functors for any left exact functor (for instance $\lim$) will vanish.
\end{proof}

\begin{rem} \label{non_lim_vanish}
    The conclusion of Corollary \ref{lim_vanish} can fail in the case $n=1$ for general $\Om_\kappa$ systems. 
    For instance, suppose $\mathcal{G}$ is the $\Om_\kappa$ system with all groups $\ZZ$ and all $p_{\alpha,n+1,n}$ being multiplication by 2. 
    Then $\lim^1\overline{G}\cong\prod_{\alpha<\kappa}\ZZ_2/\ZZ$ for $\ZZ_2$ the additive group of 2-adic integers. 
\end{rem}

The next lemma shows that two types of triviality are equivalent for injective systems. 
\begin{lem} \label{equiv}
Suppose that $\mathcal{I}$ is an injective $\Om_{\kappa}$ system. 
For every $X\subseteq\omega^\kappa$, positive integer $n$, and every $n$-coherent $\Phi\in K^{n-1}(\overline{I}\upharpoonright X)$, $\Phi$ is type $I$ trivial if and only if $\Phi$ is type $II$ trivial. 
\end{lem}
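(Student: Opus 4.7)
The plan is to reduce both notions of triviality to statements in the long exact sequence associated to $0\to I\to\overline{I}\to\overline{I}/I\to 0$ (restricted to $X$), and then kill the ambient $\overline{I}$ terms using Corollary \ref{lim_vanish}.

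First, I would record the cohomological translations already supplied by the paper: $\Phi$ is type I trivial exactly when $[\Phi]=0$ in $\lim^{n-1}(\overline{I}/I\upharpoonright X)$ for $n>1$, and when $[\Phi]$ lies in the image of the natural map $\lim^0(\overline{I}\upharpoonright X)\to\lim^0(\overline{I}/I\upharpoonright X)$ for $n=1$; while $\Phi$ is type II trivial exactly when the connecting homomorphism $\delta\colon\lim^{n-1}(\overline{I}/I\upharpoonright X)\to\lim^n(I\upharpoonright X)$ sends $[\Phi]$ to $0$, as noted via the snake-lemma chase right after Definition \ref{II-trivial}. The $n=1$ identification of type I triviality is the slightly subtle one, since the trivialization is written as an element of $\lim\overline{I}$ rather than as an element of a $K^{n-2}$; the paper handles this by passing to $\Psi=\langle p_f\psi\mid f\in X\rangle$, and Proposition \ref{lim char} then places this $\Psi$ into $\lim^0(\overline{I}\upharpoonright X)$ as required.

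Second, I would invoke Corollary \ref{lim_vanish}: since $\mathcal{I}$ is injective, $\lim^m(\overline{I}\upharpoonright X)=0$ for every $m\geq 1$. Plugging this vanishing into the long exact sequence displayed above Definition \ref{I-trivial} closes the argument. When $n>1$, both $\lim^{n-1}(\overline{I}\upharpoonright X)$ and $\lim^n(\overline{I}\upharpoonright X)$ vanish, so $\delta$ is an isomorphism, and the equivalence $[\Phi]=0 \iff \delta[\Phi]=0$ reads off at once. When $n=1$, only $\lim^1(\overline{I}\upharpoonright X)$ vanishes; thus $\delta$ is surjective with kernel equal to the image of $\lim^0(\overline{I}\upharpoonright X)\to\lim^0(\overline{I}/I\upharpoonright X)$, so $\delta[\Phi]=0$ if and only if $[\Phi]$ belongs to that image. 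In both cases, this matches the two trivialities cataloged in the first step.

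I expect no real obstacle: the argument is a short formal diagram chase in the long exact sequence, and all the substantive work has already been folded into Corollary \ref{lim_vanish}, itself a brief consequence of Lemma \ref{pullback} together with the standard fact that a product of injective objects is injective.
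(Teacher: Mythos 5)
Your proposal is correct and follows essentially the paper's own route: feed both triviality conditions into the long exact sequence arising from $0\to I\to\overline I\to\overline I/I\to 0$ restricted to (the downward closure of) $X$, then invoke Corollary \ref{lim_vanish} to make the connecting map injective when $n>1$. One small overreach worth noting: in the $n=1$ case you cite $\lim^1(\overline I\upharpoonright X)=0$, but exactness already gives $\ker\delta=\operatorname{im}\bigl(\lim^0(\overline I\upharpoonright X)\to\lim^0(\overline I/I\upharpoonright X)\bigr)$ with no hypothesis on $\mathcal I$; this is precisely why the paper remarks that the $n=1$ equivalence holds for \emph{all} $\Om_\kappa$-systems, not just injective ones.
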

\begin{proof}
By Corollary \ref{lim_vanish}, $\lim^n\overline{I}\upharpoonright X=0$ for every $n>0$. 
Letting $Y$ be the downwards closure of $X$ and using the long exact sequence of derived limits corresponding to 
\begin{center}
\begin{tikzcd}
0 \arrow[r] & {I}\upharpoonright Y \arrow[r] & \overline{{I}}\upharpoonright Y \arrow[r] & \overline{I}/I\upharpoonright Y \arrow[r] & 0,
\end{tikzcd}
\end{center}
there is an exact sequence
\begin{center}
    \begin{tikzcd}
        \lim^{n-1}(\overline{I}\upharpoonright X)\arrow[r]&\lim^{n-1}(\overline{I}/I\upharpoonright X)\arrow[r]&\lim^n(I\upharpoonright X)\arrow[r]&\lim^n(\overline{I}\upharpoonright X).
    \end{tikzcd}
\end{center}
If $n>1$, then Proposition \ref{lim char} implies that $\lim^{n-1}(\overline{I}\upharpoonright X)=\lim^{n}(\overline{I}\upharpoonright I)=0$, so an $n$-coherent $\Phi\in K^{n-1}(\overline{I}\upharpoonright X)$ represents $0$ in $\lim^{n-1}(\overline{I}/I\upharpoonright X)$ if and only if $[\Phi]$ is in the kernel of the connecting homomorphism from $\lim^{n-1}(\overline{I}/I\upharpoonright X)$ to $\lim^n(I\upharpoonright X)$. 
But these are precisely the meanings of type $I$ and type $II$ triviality. 
The fact that type $I$ and type $II$ triviality are always equivalent for $\Om_\kappa$ systems if $n=1$ by Fact \ref{trivequiv} completes the proof. 
\end{proof}

We now connect coherent families being trivial for injective systems with additivity for all systems. 
For the proof of this, we need the following general fact, which is an immediate consequence of \cite[Proposition 2.2.1]{Groth}. 
\begin{fact} \label{limn_char}
    Suppose that $\mathcal{C},\mathcal{D}$ are abelian categories, $\mathcal{C}$ has enough injective objects, and $F\colon \mathcal{C}\to\mathcal{D}$ is left exact. 
    Suppose moreover that $\{F^i\}$ is a $\delta$-functor such that $F^0=F$ and $F^i(I)=0$ for every $0<i\leq n$ and injective object $I$. 
    Then for each $0<i\leq n$, the $F^i$ are naturally isomorphic to the derived functors of $F$. 
\end{fact}

\begin{lem} \label{triv_implies_add}
For $s<\omega$, suppose that for every injective $\Om_{\kappa}$ system $\mathcal{I}$ and each $n\leq s$, every $n$-coherent family for $\mathcal{I}$ is $n$-trivial. 
Then for each $\Om_{\kappa}$ system $\mathcal{G}$, the induced map $i_*\colon\bigoplus_\alpha\lim^s\G_\alpha\to \lim^sG$ is an isomorphism.
\end{lem}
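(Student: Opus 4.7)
The plan is to take an injective resolution of $\mathcal{G}$ in the category of $\Om_\kappa$-systems and compare the two computations of derived limits via the exact functors $\mathcal{G} \mapsto G$ (into $\Ab^{\omega^\kappa}$) and $\mathcal{G} \mapsto \G_\alpha$ (into $\Ab^\omega$). Both functors are exact because the direct sum and the projection onto a coordinate preserve exactness, and both will be shown to send any injective resolution to a $\lim$-acyclic one, so the problem reduces to a comparison at the level of $\lim^0$.

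First I would extract from the hypothesis the vanishing $\lim^n I = 0$ for all $n \geq 1$ and every injective $\Om_\kappa$-system $\mathcal{I}$. The preceding lemma shows that for injective systems type $I$ and type $II$ triviality coincide, so the hypothesis yields that the connecting homomorphism $\lim^{n-1}(\overline{I}/I) \to \lim^n I$ is zero in each positive degree; combined with $\lim^k \overline{I} = 0$ for $k \geq 1$ from Corollary \ref{lim_vanish}, the long exact sequence of derived limits associated to $0 \to I \to \overline{I} \to \overline{I}/I \to 0$ then forces $\lim^n I = 0$ for all $n \geq 1$. Dually, since Lemma \ref{concrete inj} characterizes injectivity coordinatewise, each $\mathcal{I}_\alpha$ is injective in $\Ab^\omega$ whenever $\mathcal{I}$ is injective in $\Om_\kappa$-systems, and hence $\lim^n \mathcal{I}_\alpha = 0$ for $n \geq 1$.

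Next I would handle the $n = 0$ case for an arbitrary $\Om_\kappa$-system $\mathcal{G}$ directly, with no hypothesis needed. Given $(g_x)_{x \in \omega^\kappa} \in \lim G$, set $h_\alpha(k) := g_{k e_\alpha}(\alpha)$, where $e_\alpha \in \omega^\kappa$ is $1$ at $\alpha$ and $0$ elsewhere; compatibility in $\lim G$ gives $h_\alpha \in \lim \G_\alpha$. If infinitely many distinct $\alpha_i$ carried some nonzero $h_{\alpha_i}(k_i)$, then the element $x \in \omega^\kappa$ defined by $x(\alpha_i) := k_i$ and $x(\beta) := 0$ for $\beta$ outside $\{\alpha_i\}$ would satisfy $g_x(\alpha_i) \neq 0$ for every $i$ (by compatibility of $g_x$ with $g_{k_i e_{\alpha_i}}$ via the relation $x \geq k_i e_{\alpha_i}$), contradicting the finite support of $g_x \in \bigoplus_\alpha G_{\alpha,x(\alpha)}$. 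Hence $(h_\alpha) \in \bigoplus_\alpha \lim \G_\alpha$, and one checks readily that this assignment is a two-sided inverse to the natural map.

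Finally I would glue these facts together using an injective resolution $0 \to \mathcal{G} \to \mathcal{I}^\bullet$. Applying the two exact functors yields resolutions $0 \to G \to I^\bullet$ in $\Ab^{\omega^\kappa}$ and $0 \to \G_\alpha \to \mathcal{I}^\bullet_\alpha$ in $\Ab^\omega$, which are $\lim$-acyclic by the first step, so $\lim^n G = H^n(\lim I^\bullet)$ and $\lim^n \G_\alpha = H^n(\lim \mathcal{I}^\bullet_\alpha)$. Applying the $n = 0$ isomorphism term-by-term identifies $\lim I^\bullet$ naturally with the cochain complex $\bigoplus_\alpha \lim \mathcal{I}^\bullet_\alpha$, and since direct sums in $\Ab$ are exact they commute with cohomology, yielding $\lim^n G \cong \bigoplus_\alpha \lim^n \G_\alpha$, identified by naturality with the map in the statement. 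The main obstacle will be the first step: deriving $\lim^n I = 0$ cleanly from the hypothesis, which is phrased in terms of individual $n$-coherent families rather than classes, requires verifying that every class in $\lim^{n-1}(\overline{I}/I)$ admits such a representative and that the equivalence of type $I$ and type $II$ triviality can be invoked uniformly across degrees.
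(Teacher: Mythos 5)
Your proof is correct and is conceptually the same argument as the paper's, just unwound. The paper's proof is a four-line invocation of the uniqueness theorem for universal $\delta$-functors: $\mathcal{G}\mapsto\bigoplus_\alpha\lim^n\G_\alpha$ and $\mathcal{G}\mapsto\lim^n G$ are $\delta$-functors agreeing in dimension $0$ with $\mathcal{G}\mapsto\bigoplus_\alpha\lim\G_\alpha$, and both kill injectives in positive degrees (the first by Lemma \ref{pullback}, the second by the hypothesis), so they coincide. Your version reaches the identical conclusion by fixing an explicit injective resolution $0\to\mathcal{G}\to\mathcal{I}^\bullet$, applying the two exact functors $\mathcal{G}\mapsto G$ and $\mathcal{G}\mapsto\G_\alpha$, and comparing the resulting acyclic complexes term by term. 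Where your write-up genuinely earns its keep is in filling in two steps the paper asserts without proof: the dimension-$0$ isomorphism $\lim G\cong\bigoplus_\alpha\lim\G_\alpha$, via the $h_\alpha(k)=g_{ke_\alpha}(\alpha)$ construction together with the finite-support argument; and the passage from ``every $n$-coherent family is trivial'' to $\lim^n I=0$, via the long exact sequence of $0\to I\to\overline{I}\to\overline{I}/I\to 0$ and Corollary \ref{lim_vanish}. Your closing worry is not actually an obstacle: every class in $\lim^{n-1}(\overline{I}/I)$ is represented by an $n$-coherent family, since $K^{n-1}(\overline{I})\to K^{n-1}(\overline{I}/I)$ is levelwise surjective, so any cycle downstairs lifts to a $\Phi$ with $d\Phi\in K^n(I)$; and for injective systems the equivalence of type I and type II triviality holds in every degree by the preceding lemma, so the hypothesis kills the connecting map in each positive degree uniformly.
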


\begin{proof}
Consider the functor $F\colon\Om_{\kappa}$ sys$\to$Ab given by $\mathcal{G}\mapsto\bigoplus_\alpha\lim\G_\alpha$. 
The $\delta$-functors 
\[\mathcal{G}\mapsto\bigoplus_\alpha\lim{}^n\G_\alpha\]
and
\[\mathcal{G}\mapsto\lim{}^nG\]
both coincide in dimension $0$ with $F$ and (using the hypothesis for the second), map injective objects to $0$ in dimensions $n\leq s$. 
Therefore, they coincide and are the derived limits of $F$ in dimensions at most $s$ by Fact \ref{limn_char}. 

It remains to show that the induced map is an isomorphism. 
By the universal property of derived limits (see \cite[Definition 2.1.4]{Weibel}), there are unique maps of $\delta$-functors $\varphi$ from $R^\bullet F$ to $\bigoplus_\alpha\lim^s\mathcal{G}_\alpha$ and $\psi$ from $R^\bullet F$ to $\lim^sG$ which are the identity in dimension $0$ as in the following diagram: 
\begin{center}
\begin{tikzcd}
R^\bullet F \arrow[rr, "\varphi"] \arrow[rd, "\psi"'] &         & \bigoplus_\alpha\lim^s\mathcal{G}_\alpha \\
                                                      & \lim^sG &                                         
\end{tikzcd}
\end{center}
Since $i_*\circ\varphi$ is a map of $\delta$-functors from $R^\bullet F$ to $\lim^sG$, we must have $i_*\circ\varphi=\psi$, so the following diagram commutes. 
\begin{center}
\begin{tikzcd}
R^\bullet F \arrow[rr, "\varphi"] \arrow[rd, "\psi"'] &         & \bigoplus_\alpha\lim^s\mathcal{G}_\alpha \arrow[ld, "i_*"'] \\
                                                      & \lim^sG &                                                            
\end{tikzcd}
\end{center}
We have shown already that $\varphi$ and $\psi$ are isomorphisms in dimension $s$, so $i_*$ is also an isomorphism in dimension $s$.
\end{proof}

The remainder of our work will be showing that coherent families are trivial for injective $\Om_\kappa$ systems after adding sufficiently many Cohen sequences. 

\section{Propagating Trivializations for Injective Systems}

In this section, we demonstrate how to propagate trivializations of a coherent family for an injective system on a set containing sufficiently many Cohen sequences to a trivialization on the whole domain. 
In section 6, we will define a sequence of cardinals $\langle \lambda_n\mid n<\omega\rangle$ and show that any $n$-coherent family indexed by a set of $\lambda_n$ many Cohen sequences is trivial once restricted to some set of $\lambda_{n-1}$ many Cohen sequences. 
The main goal of this section is to demonstrate that in the presence of a trivialization in such a large set of the sequences implies that the original $n$-coherent family must, in fact, have been trivial to begin with. 

The results of this section are a generalization of the results of \cite[Section 4]{SVHDLwLC}, where trivializations of coherent families for the system $\mathbf{A}$ propagated in a very similar manner.
The main difference in this section is that the arguments are for the class of injective $\Om_\kappa$ systems; Lemma \ref{triv_implies_add} will ultimately allow us to connect coherent families being trivial to additivity of derived limits for all systems. 
Despite arguing for the larger class of injective $\Om_\kappa$ systems, the argument in this section is extremely similar to the arguments of \cite[Section 4]{SVHDLwLC} for the system $\mathbf{A}$; we give the argument here for the sake of completeness. 
We also restate the result as a theorem of ZFC, rather than about coherent families indexed over sets of Cohen sequences as was done in \cite{SVHDLwLC}.
In this section, trivializations of coherent families will be type $I$ trivializations. 

\begin{defn}
Fix an injective $\Om_{\kappa}$ system $\mathcal{I}$, a subset $X\subseteq\omega^\kappa$, a function $g\in\omega^\kappa$, and a positive integer $n$.
We say a family $\Phi=\left\langle \phi_{\mathbf{x} }\in\overline{I}_{g\wedge \bigwedge\mathbf{x} }\,\middle|\,\mathbf{x}\in X^n\right\rangle$ is \emph{$n$-coherent below $g$ for $\mathcal{I}$} if it satisfies Definition \ref{coherent}, with $K^\bullet$ replaced with the complex where for each finite sequence $\mathbf{x} $ of elements of $X$, $I_{\bigwedge\mathbf{x}}$ is replaced with $I_{g\wedge \bigwedge\mathbf{x}}$. 
That is, 
\begin{itemize}
    \item $\phi_{\mathbf{x}}\in I_{g\wedge\bigwedge\mathbf{x}}$.
    \item whenever $\sigma\in S_n$ and $\mathbf{x}\in X^n$, $\phi_{\mathbf{x}}=\operatorname{sgn}(\sigma)\phi_{\sigma(\mathbf{x})}$.
    \item whenever $\mathbf{y}\in X^{n+1}$, $\sum_{i=0}^n(-1)^i\phi_{\mathbf{y}^i}=^*0$.
\end{itemize}
Such a family is \emph{$n$-trivial below $g$} if there exists a $\psi$ or $\Psi$ as in Definition \ref{I-trivial}, the only difference is that in the case $n=1$, $\psi\in\overline{I}_g$ and in the case $n>1$, $\psi_{x}\in \overline{I}_{g\wedge\bigwedge\mathbf{x}}$. 
That is, if $n=1$ then for each $x\in X$, $p_x\psi=^*\Phi_x$ and if $n>1$ then for each $\mathbf{x}\in X^n$, $\Phi_{\mathbf{x}}=^*\sum_{i=0}^{n-1}(-1)^i\psi_{\mathbf{x}^i}$.
As with $n$-coherent families, since $n$ and $g$ can be deduced from an $n$-coherent family below $g$, we will sometimes simply say that $\Phi$ is trivial instead of $n$-trivial below $g$. 
\end{defn}

\begin{rem}
    We emphasize that the index set of an $n$-coherent family below $g$ is all of $X^n$, not the meets of $g$ with elements of $X$. 

    If $g\in X$ and $\Phi$ is $1$-coherent below $g$ then $\Phi$ is trivialized by $\Phi_g$. 
    Typically, $X$ will consist of many Cohen sequences and so each element of $X$ will be larger than $g$ infinitely often and smaller than $g$ infinitely often. 
\end{rem}

The following proposition is crucial for the arguments of this section and is one of the main reasons we use injective objects. 
We do not know if the proposition can consistently fail for general $\Om_\kappa$ systems, but the proof relies on Corollary \ref{extend_by_0}.

\begin{prop} \label{trivial_below}
Suppose that $\mathcal{I}$ is an injective $\Om_{\kappa}$ system, $X\subseteq\omega^\kappa$, $g\in\omega^\kappa$, $n$ is a positive integer, and every $n$-coherent family for $\mathcal{I}$ indexed by $X^n$ is trivial. 
Then every $n$-coherent family below $g$ for $\mathcal{I}$ indexed by $X^n$ is $n$-trivial below $g$.
\end{prop}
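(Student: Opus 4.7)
The strategy is to reduce to the unrestricted case using the sections $i_{fg}\colon \overline{I}_f\to\overline{I}_g$ of Corollary \ref{extend_by_0}. Given an $n$-coherent family $\Phi$ below $g$, I would lift it to a family on $X^n$ by
\[
\tilde{\phi}_{\vec f} := i_{\bigwedge\vec f\wedge g,\, \bigwedge\vec f}(\phi_{\vec f}) \in \overline{I}_{\bigwedge \vec f},
\]
apply the triviality hypothesis on $X^n$ to obtain a trivialization $\tilde\Psi$ of $\tilde\Phi$, and then project $\tilde\Psi$ back below $g$ using the structure maps of $\mathcal{I}$.

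The key technical step is to check that $\tilde\Phi$ is genuinely $n$-coherent, i.e.\ that $d\tilde\Phi(\vec f) \in I_{\bigwedge \vec f}$ whenever $\vec f \in X^{n+1}$. I would argue coordinatewise. Fix $\alpha$, write $a := \bigwedge\vec f(\alpha)$, $a_j := \bigwedge\widehat{f_j}(\alpha)$, and $b := g(\alpha)$, and expand
\[
d\tilde\Phi(\vec f)(\alpha) = \sum_j (-1)^j\, p_{\alpha,a_j,a}\circ i_{\alpha,a_j\wedge b,\,a_j}\bigl(\phi_{\widehat{f_j}}(\alpha)\bigr).
\]
Using the defining identity $p\circ i = \mathrm{id}$ together with the derived identity $p_{\alpha,m'',m'}\circ i_{\alpha,n,m''} = i_{\alpha,n,m'}$ for $n\leq m'\leq m''$ (which follows from the compatibility of the sections constructed in Corollary \ref{extend_by_0}), a case analysis on whether $b\geq a$ or $b<a$ collapses this sum to either $d\Phi(\vec f)(\alpha)$ or $i_{\alpha,b,a}(d\Phi(\vec f)(\alpha))$ respectively. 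Since $i$ preserves $0$ and $d\Phi(\vec f)$ is finitely supported in $\kappa$, the same holds for $d\tilde\Phi(\vec f)$.

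Once $\tilde\Phi$ is coherent, the hypothesis yields a trivialization $\tilde\Psi$, and I would recover a trivialization of $\Phi$ below $g$ by projecting: set $\psi := p_g(\tilde\Psi) \in \overline{I}_g$ for $n=1$, or $\Psi'_{\vec f} := p_{\bigwedge \vec f,\,\bigwedge \vec f\wedge g}(\tilde\Psi_{\vec f})$ for $n>1$. Naturality of the cochain differential with respect to the structure maps, combined with $p\circ i = \mathrm{id}$, yields $d\Psi' =^* p(\tilde\Phi) = p\circ i(\Phi) = \Phi$ modulo finite support (with the $n=1$ case handled analogously). The main obstacle is the coordinatewise coherence verification in the middle step: the formula for $d\tilde\Phi(\vec f)(\alpha)$ mixes the projections $p$ with the sections $i$ in a way that is not obviously finitely supported, and the reduction to $d\Phi$ or its image under a single section relies essentially on the compatibility property of the sections provided by Corollary \ref{extend_by_0}.
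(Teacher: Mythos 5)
Your proposal matches the paper's proof step for step: lift $\Phi$ via the sections $i_{fg}$ from Corollary \ref{extend_by_0}, invoke the triviality hypothesis on $X^n$, and project the resulting trivialization back below $g$ via the structure maps $p$. Where the paper simply asserts that the lifted family is $n$-coherent ``since the $i_{x,y}$ are compatible and injective,'' your coordinatewise case analysis on $g(\alpha)$ versus $\bigwedge\vec f(\alpha)$ correctly supplies the verification that the paper leaves implicit.
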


\begin{proof}
Fix $\{i_{h,k}\colon I_h\to I_k\mid h\leq k\}$ as in Corollary \ref{extend_by_0}. 
Let $\Phi=\langle\varphi_{\mathbf{x}}\in I_{g\wedge \bigwedge\mathbf f}\mid\mathbf f\in X^n\rangle$ be $n$-coherent below $g$. 
Let $\Phi^*=\langle i_{g\wedge\bigwedge\mathbf{x},\bigwedge\mathbf{x}}(\varphi_{\mathbf f})\mid \mathbf{x}\in X^n\rangle$; since the $i_{x,y}$ are compatible and injective, $\Phi^*$ is $n$-coherent. 
By assumption, $\Phi^*$ is trivial, as witnessed by a single $\psi^*$ if $n=1$ or a family $\Psi^*$ if $n>1$. 
Then applying $p_{\bigwedge\mathbf{x},g\wedge\bigwedge\mathbf{x}}$ pointwise if $n>1$ or $p_{g\wedge f}$ if $n=1$, we obtain a trivialization of $\Phi$. 
\end{proof}

Our main lemma for this section is the following. 
We note that, from what we have proven above about injective systems, the proof follows mutatis mutandis from the proof of \cite[Lemma 4.2]{SVHDLwLC}; we repeat the argument for completeness. 
Note that the last condition ensures that $A$ is large enough that Lemma \ref{propagate} is not trivially false. 

\begin{lem} \label{propagate}
Suppose that 
\begin{itemize}
    \item $\mathcal{I}$ is an injective $\Om_\kappa$ system;
    \item $A\subseteq X\subseteq \omega^\kappa$;
    \item $\Phi=\langle\varphi_{\mathbf{x}}\,\mid\, \mathbf{x}\in X^n\rangle$ is an $n$-coherent family for $\mathcal{I}$;
    \item $\Phi\upharpoonright A$ is $n$-trivial for $\Phi\upharpoonright A$ the natural $n$-coherent family indexed by $A$ obtained by restricting entries in $\Phi$;
    \item For any $1\leq j\leq n-1$, every $j$-coherent family for $\mathcal{I}$ indexed by $A$ is trivial;
    \item for any infinite $E\subseteq \kappa$ and $f\colon E\to\omega$ there is a $g\in A$ such that $\{\alpha\in E\mid f(\alpha)\leq g(\alpha)\}$ is infinite.
\end{itemize}
Then $\Phi$ is $n$-trivial.
\end{lem}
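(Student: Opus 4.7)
The plan is to mirror the argument of \cite[Lemma 4.2]{SVHDLwLC}, replacing at each step the features of the system $\mathbf{A}$ used there by the analogous results we have just established for arbitrary injective systems. The essential ingredients are: the equivalence of type~I and type~II triviality for injective systems (the preceding lemma); the ``extension by $0$'' maps $\{i_{h,k}\colon\overline{I}_h\to\overline{I}_k\mid h\leq k\}$ of Corollary~\ref{extend_by_0}; and the vanishing of $\lim^n\overline{I}\upharpoonright Y$ from Corollary~\ref{lim_vanish}.

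First I would appeal to the preceding lemma to assume the given trivialization of $\Phi\upharpoonright A$ is of type~I, witnessed by some $\Psi^A\in K^{n-2}(\overline{I}\upharpoonright A)$ (or a single $\psi^A\in\lim\overline{I}$ when $n=1$) satisfying $d\Psi^A=^*\Phi$ on $A^n$; it then suffices to extend $\Psi^A$ to a $\Psi^X\in K^{n-2}(\overline{I}\upharpoonright X)$ with $d\Psi^X=^*\Phi$ on all of $X^n$. The main construction is a Zorn's-Lemma argument on the poset of pairs $(B,\Psi^B)$ with $A\subseteq B\subseteq X$ and $\Psi^B$ a type~I trivialization of $\Phi\upharpoonright B$ extending $\Psi^A$. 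Given a maximal such $(B,\Psi^B)$, I would argue $B=X$ by contradiction: fixing $f\in X\setminus B$, the values $\psi^B_{\vec g\cat\langle f\rangle}$ for $\vec g\in B^{n-2}$ must be chosen to satisfy a coherence system derived from $\Phi$.

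The density condition furnishes, for each $\vec g$, elements of $A$ dominating $\bigwedge\vec g\wedge f$ on infinite subsets of $\kappa$, and the splittings $i_{h,k}$ let us import the existing trivializations at those elements back down to the level of $\bigwedge\vec g\wedge f$. The obstruction to patching these local candidates into a globally coherent extension is a cocycle in $K^j(\mathcal{I}\upharpoonright A)$ for some $1\leq j\leq n-1$, which is a coboundary by hypothesis and can be subtracted off. The main obstacle is the bookkeeping in this last step: verifying that the chosen corrections are mutually consistent across different $\vec g\in B^{n-2}$ and do not disturb the compatibilities already baked into $\Psi^B$, and that the union of a chain in the poset is again a valid extension. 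The former is where the density/partition hypothesis does real work, while the latter is essentially formal; together they contradict maximality of $(B,\Psi^B)$ and complete the Zorn argument.
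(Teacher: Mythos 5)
Your overall strategy diverges substantially from the paper's. The paper runs a finite recursion over $k=1,\ldots,n$, building auxiliary $(n-k)$-coherent-below-$\bigwedge\vec f$ families $C_{n-k}^{\vec f}$ that are always indexed by $\vec\alpha\in A^{n-k}$ (for varying parameters $\vec f\in X^{k-1}$), and trivializing each via Proposition~\ref{trivial_below} and the cohomology hypothesis on $A$; the density hypothesis enters only at the end, to verify that the final $\mathscr{T}_n$ really does trivialize $\Phi$. You instead propose a Zorn argument that grows a set $B$ from $A$ to $X$ one element at a time, extending a type~I trivialization at each successor step.

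There are two concrete gaps in the Zorn approach. First, the obstruction to extending a trivialization $\Psi^B$ to $B\cup\{f\}$ lives in the cohomology of $\mathcal{I}\upharpoonright B$, not $\mathcal{I}\upharpoonright A$: the constraints on the new values $\psi^B_{\vec g,f}$ for $\vec g\in B^{n-2}$ come from coboundary conditions on tuples in $(B\cup\{f\})^n$, which involve the previously \emph{constructed} values $\psi^B_{\vec h}$ with $\vec h$ drawn from $B\setminus A$. The hypothesis $H^j(K^\bullet(\mathcal{I}\upharpoonright A))=0$ says nothing about $B\supsetneq A$ and there is no reason to expect it to persist as $B$ grows. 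The paper's recursion is engineered specifically to avoid this: every coherent family that needs trivializing is indexed over $A$, never a larger set. Second, the importation step you describe does not go through as stated. The density hypothesis produces $x\in A$ such that $f(\alpha)\le x(\alpha)$ for infinitely many $\alpha\in E$, not for all $\alpha$. The splittings $i_{h,k}$ of Corollary~\ref{extend_by_0} require $h\le k$ pointwise; an $x\in A$ that dominates $\bigwedge\vec g\wedge f$ only on an infinite subset does not give a map $\overline{I}_{\bigwedge\vec g\wedge f}\to\overline{I}_x$, so you cannot ``import the existing trivialization back down'' in the way you indicate. In the paper, the unboundedness hypothesis is used only to show that two quantities agreeing $=^*$ below every $g\in A$ in fact agree $=^*$ outright, which is the correct use of ``domination on infinite sets.'' To repair your argument you would essentially have to replace the horizontal one-point extension by the vertical parameterized construction the paper uses, since that is what keeps every application of the cohomology hypothesis over $A$ alone.
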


\begin{proof}
We first show the $n=1$ case. 
Since $\Phi\upharpoonright A$ is trivial, there is a $\Psi\in \lim \overline{I}$ trivializing $\Phi\upharpoonright A$; we claim that $\Psi$ also trivializes $\Phi$. 
Otherwise, there is an $f\in X$ such that $E=\{\alpha\mid p_{f}\Psi(\alpha)\neq \Phi_{f}(\alpha)\}$ is infinite. 
By hypothesis on $A$, we may find some $g\in A$ such that $E'=\{\alpha\in E\mid f(\alpha)\leq g(\alpha)\}$ is infinite. 
Since $\Psi$ trivializes $\Phi\upharpoonright A$, there are only finitely many $\alpha\in E'$ where $p_f\Psi(\alpha)\neq p_{g,g\wedge f}\Phi_g(\alpha)$. 
Since $\Phi$ is coherent, there are only finitely many $\alpha\in E'$ where $p_{f,g\wedge f}\Phi_f(\alpha)\neq p_{g,g\wedge f}\Phi_g$. 
In particular, there is an $\alpha\in E'$ where $p_f\Psi(\alpha)=p_{g,g\wedge f}\Phi_g(\alpha)=p_{f,g\wedge f}\Phi_f(\alpha)$. 
Since $f(\alpha)\leq g(\alpha)$, $p_{f,g\wedge f}\Phi_f(\alpha)=\Phi_f(\alpha)$, contradicting that $\alpha\in E$.

We now present the general argument. 
For readability of notation, we will frequently remove the appropriate bonding maps in the inverse system $\overline{I}$; sums of elements which are officially are in different groups will be taken to mean first applying the homomorphisms from the data of $\overline{I}$ to project to the common meet of the indices of the group elements being added. 

Let $\mathscr{T}_1=\langle\tau_{\mathbf{x}}\mid\mathbf{x}\in A^{n-1}\rangle$ be a (type $I$) trivialization of $\Phi\upharpoonright A$. 
For each $f\in X$, let $C_{n-1}^f$ be the family below $f$ indexed by $A^{n-1}$ defined by 
\[\zeta_{\mathbf{x}}^f=\varphi_{\mathbf{x} f}+(-1)^np_{\bigwedge\mathbf{x},\bigwedge\mathbf{x}\wedge f}\tau_{\mathbf{x}}.\]
After adding appropriate bonding maps to map all group elements to $\overline{I}_{\mathbf{x}\wedge f}$, the following computation shows that $C_{n-1}^f$ is coherent: 
\[\begin{aligned}
\sum_{i=0}^{n-1}(-1)^i\zeta^f_{\boldsymbol\alpha^i}&=\sum_{i=0}^{n-1}(-1)^i\varphi_{\boldsymbol\alpha^if}+(-1)^n\sum_{i=0}^{n-1}(-1)^i\tau_{\boldsymbol\alpha^i}\\
&=^*\sum_{i=0}^{n-1}(-1)^i\varphi_{\boldsymbol\alpha^if}+(-1)^n\varphi_{\boldsymbol\alpha}\\
&=d\Phi_{\boldsymbol\alpha f}\\
&=^*0
\end{aligned}\]
where the first $=^*$ is the fact that $\mathscr{T}_1$ trivializes $\Phi\upharpoonright A$ and the last uses that $\Phi$ is $n$-coherent so that $d\Phi_{\boldsymbol\alpha f}=^*0$. 
We now define by mutual recursion, for $2\leq k\leq n$, 
\begin{itemize}
    \item $\mathscr{T}_k=\langle \tau_{\boldsymbol\alpha}^{\mathbf f}\,\mid\,\mathbf f\in X^{k-1},\boldsymbol\alpha\in A^{n-k}\rangle$ where for each $\mathbf f\in X^{k-1}$, $\langle \tau_{\boldsymbol \alpha}^{\mathbf f}\mid\boldsymbol\alpha\in A^{n-k}\rangle$ trivializes $C_{n-k+1}^{\mathbf f}$
    \item For $\mathbf f\in X^k$, families $C_{n-k}^{\mathbf f}=\langle\zeta_{\boldsymbol\alpha}^{\mathbf f}\,\mid\,\boldsymbol\alpha\in A^{n-k}\rangle$ which are $(n-k)$-coherent below $\bigwedge\mathbf f$ defined by 
    \[\zeta_{\boldsymbol\alpha}^{\mathbf f}=\varphi_{\boldsymbol\alpha\mathbf f}+(-1)^{n-k+1}\sum_{i=0}^{k-1}(-1)^i\tau_{\boldsymbol\alpha}^{\mathbf f^i}.\]
\end{itemize}
The hypothesis that every $(n-k)$-coherent family indexed by $A$ is trivial plus Proposition \ref{trivial_below} guarantees that for each $1\leq k\leq n-1$ and each $\mathbf f$ we can pass from each $C_{n-k}^{\mathbf f}$ to a trivialization, yielding $\mathscr{T}_{k+1}$. 
Therefore, it suffices to show that
\begin{enumerate}
    \item For each $\mathbf f\in X^k$, $C_{n-k}^{\mathbf f}$ is $(n-k)$-coherent
    \item $\mathscr{T}_n$ does indeed trivialize $\Phi$. 
\end{enumerate}
The first can be verified by the series of steps for every $\boldsymbol\alpha\in A^{n-k+1}$ (after applying appropriate maps to ensure the sum is taken in $\overline{I}_{\bigwedge\mathbf f\wedge\bigwedge\boldsymbol\alpha}$), taken verbatim from \cite[Claim 4.6]{SVHDLwLC}. 

\[\begin{aligned}
\sum_{i=0}^{n-k}(-1)^{i} \zeta_{\boldsymbol{\alpha}^{i}}^{\mathbf{f}}&=\sum_{i=0}^{n-k}(-1)^{i} \varphi_{\boldsymbol{\alpha}^{i} \mathbf{f}}+(-1)^{n-k+1} \sum_{i=0}^{n-k}(-1)^{i} \sum_{j=0}^{k-1}(-1)^{j} \tau_{\boldsymbol{\alpha}^{i}}^{\mathbf{f}^j}\\
&=^* \sum_{i=0}^{n-k}(-1)^{i} \varphi_{\boldsymbol{\alpha}^{i} \mathbf{f}}+(-1)^{n-k+1} \sum_{j=0}^{k-1}(-1)^{j} \zeta_{\boldsymbol{\alpha}}^{\mathbf{f}}\\
&=^{*} \sum_{i=0}^{n-k}(-1)^{i} \varphi_{\boldsymbol{\alpha}^{i} \mathbf{f}}+(-1)^{n-k+1}\left(\sum_{j=0}^{k-1}(-1)^{j}\left(\varphi_{\boldsymbol{\alpha} \mathbf{f}^{j}}+(-1)^{n-k} \sum_{\ell=0}^{k-2}(-1)^{\ell} \tau_{\boldsymbol{\alpha}}^{\left(\mathbf{f}^{j}\right)^{\ell}}\right)\right)\\
&=^{*} \sum_{i=0}^{n}(-1)^{i} \varphi_{(\boldsymbol{\alpha} \mathbf{f})^{i}}+(-1)^{n-k+1}\left(\sum_{j=0}^{k-1}(-1)^{j+n-k} \sum_{\ell=0}^{k-2}(-1)^{\ell} \tau_{\boldsymbol{\alpha}}^{\left(\mathbf{f}^{j}\right)^{\ell}}\right)\\
&=^{*}-\left(\sum_{j=0}^{k-1}(-1)^{j} \sum_{\ell=0}^{k-2}(-1)^{\ell} \tau_{\boldsymbol{\alpha}}^{\left(\mathbf{f}^{j}\right)^{\ell}}\right)\\
&=^{*}-\left(\sum_{j \leq \ell \leq k-2}(-1)^{j+\ell} \tau_{\boldsymbol{\alpha}}^{\left(\mathbf{f}^{j}\right)^{\ell}}+\sum_{\ell<j \leq k-1}(-1)^{j+\ell} \tau_{\boldsymbol{\alpha}}^{\left(\mathbf{f}^{\jmath}\right)^{\ell}}\right)\\
&=^{*}-\left(\sum_{j \leq \ell \leq k-2}(-1)^{j+\ell} \tau_{\boldsymbol{\alpha}}^{\left(\mathbf{f}^{\ell+1}\right)^{j}}+\sum_{\ell<j \leq k-1}(-1)^{j+\ell} \tau_{\boldsymbol{\alpha}}^{\left(\mathbf{f}^{j}\right)^{\ell}}\right)\\
&=^{*}-\left(\sum_{\ell<j \leq k-1}(-1)^{j+\ell+1} \tau_{\boldsymbol{\alpha}}^{\left(\mathbf{f}^{j}\right)^{\ell}}+\sum_{\ell<j \leq k-1}(-1)^{j+\ell} \tau_{\boldsymbol{\alpha}}^{\left(\mathbf{f}^{j}\right)^{\ell}}\right)\\
&=0 .
\end{aligned}\]

As in \cite{SVHDLwLC},
the fact that the functions $\tau_{\boldsymbol{\alpha}^{i}}^{\mathbf{f}^j}$ trivialize the functions $\zeta_{\boldsymbol{\alpha}}^{\mathbf{f}^{j}}$ underlies the passage from the first line to the second; replace $\zeta_{\boldsymbol{\alpha}}^{\mathbf{f}}$ with its definition to pass from the second line to the third. Nothing more than a regrouping underlies the passage from the third line to the fourth, whereupon the first sum vanishes modulo finite by the $n$-coherence of $\Phi$ at $\boldsymbol\alpha\mathbf f$. 
Simple bookkeeping converts the fifth line into the sixth, and the fact that $\left(\mathbf{f}^{j}\right)^{\ell}=\left(\mathbf{f}^{\ell+1}\right)^{j}$ for all $j \leq \ell \leq k-2$ converts the sixth line into the seventh. $\mathrm{A}$ renaming of variables in the first sum from $\ell+1$ to $j$ and $j$ to $\ell$ then yields the eighth line, whose terms all plainly cancel. 

To verify that $\mathscr{T}_n=\langle\tau^{\mathbf f}_{\{\varnothing\}}\mid \mathbf f\in X^n\rangle$ trivializes $\Phi$, note that for any $g\in A$ and $\mathbf f\in X^n$, once projected onto $\bigwedge\mathbf f\wedge g$, we have

\[\begin{aligned}
(d\mathscr{T}_n)_{\mathbf f}&=\sum_{i=0}^{n-1}(-1)^i\tau^{\mathbf f^i}\\
&=^*\sum_{i=0}^{n-1}(-1)^i\zeta^{\mathbf f^i}_g\\
&=^*\sum_{i=0}^{n-1}(-1)^i\left[\varphi_{g\mathbf f^i}+(-1)^{n}\sum_{j=0}^{n-2}(-1)^j(\tau_g^{(\mathbf f^{i})^j})\right]\\
&=^*\sum_{i=0}^{n-1}(-1)^i\varphi_{g\mathbf f^i}\\
&=^*\varphi_{\mathbf f}
\end{aligned},\]
where the first equality is definition of $d$, the second uses that $\tau^{\mathbf f^i}_{\{\varnothing\}}$ trivializes $\langle\zeta_g^{\mathbf f^i}\mid g\in A\rangle$, the third is expanding the definition of $\zeta$, the fourth is cancelling $\tau_g^{(\mathbf{f}^i)^j}$ with the corresponding term with indices in removed in the opposite order, and the last equality is the coherence of $\Phi$ at $g\mathbf f$. 
Now, if $d\mathscr{T}_n(\mathbf f)$ and $\varphi_{\mathbf f}$ disagreed infinitely often, let $E=\{\alpha\mid d\mathscr{T}_n(\mathbf f)(\alpha)\neq\varphi_{\mathbf f}(\alpha)\}$. 
By hypothesis, we may find an $g\in A$ such that $\{\alpha\in E\mid\bigwedge\mathbf f(\alpha)\leq g(\alpha)\}$ is infinite. 
But then $d\mathscr{T}_n(\mathbf f)$ disagrees with $\varphi_{\mathbf f}$ infinitely often, even after projecting onto $g\wedge\bigwedge\mathbf f$, contrary to the above calculation. 
\end{proof}

The relevance of Lemma \ref{propagate} to Cohen forcing is the following:

\begin{prop} \label{unbdd}
Suppose $G$ is $\operatorname{Fn}(\chi\times\kappa,\omega)$ generic and suppose $a\subseteq\chi$ is in $V[G]$ and has cardinality at least $\kappa^+$. 
Let $A=\{f_\alpha\mid \alpha\in a\}$. 
Then $A$ satisfies the last clause of Lemma \ref{propagate}. 
\end{prop}

\begin{proof}
Work in $V[G]$ and suppose $E\subseteq\kappa$ is infinite. 
By shrining $E$ if necessary, we may assume $E$ is countable. 
By the countable chain condition, there is a $W\subseteq\chi$ in $V$ such that $|W|\leq\kappa$ and $E\in V[G_W]$. 
By mutual genericity, the sequence indexed by any $\beta\in a\setminus W$ will do. 
\end{proof}

The plan for the remainder of the argument will consist of defining a sequence $\langle \lambda_n\mid n<\omega\rangle$ of cardinals such that any $n$-coherent family indexed by a set containing at least $\lambda_n$ many of the Cohen sequences is trivial once restricted to a set containing at least $\lambda_{n-1}$ many Cohen sequences. 
Lemma \ref{propagate} will then allow us to inductively conclude that every $n$-coherent family indexed by a set containing at least $\lambda_n$ many Cohen sequences is trivial. 
The first step factors through a partition principle, analogous to the one defined in \cite[Definition 3.2]{DAHDL}. 

\section{A Local Partition Result in Cohen Extensions}

We now extract a partition result in Cohen extensions which is essentially proven in section 6 of \cite{SVHDLwLC} and will allow us to create type $II$ trivializations of families indexed by very large sets of Cohen sequences on sets with fewer but still sufficiently many Cohen sequences. 
We first make a preliminary definition. 

\begin{defn} \label{str}
A \emph{string on $[X]^{\leq n}$} is a sequence $\langle a_i\mid 0\leq i\leq m\rangle$ of finite subsets of $X$ such that 
\begin{itemize}
    \item $m\leq n$;
    \item for each $i$, $a_i\subsetneq a_{i+1}$;
    \item for $i<m$, $|a_{i+1}|=|a_i|+1$;
    \item $|a_m|=n$.
\end{itemize}
We denote the set of maximal strings on $X$ as $X^{\bb n}$.
If $\vec a$ is a maximal string and $F\colon [X]^{\leq n}\to X$ is a function, we write $F^*(\sigma)=\langle F(a_1),\ldots,F(a_n)\rangle$. 
\end{defn}

\begin{lem} \label{LPH}
    Let $\kappa$ be a cardinal, let $\lambda$ be regular such that for every $\nu<\lambda$ $\nu^\kappa<\lambda$, and let $n\geq2$. 
    Let $\mu=\sigma(\lambda^+,2n-1)$ as defined in Definition \ref{sigma}, let $\chi\geq\mu$ be a cardinal, and let $\mathbb{P}=\operatorname{Fn}(\chi\times\kappa,\omega)$. 
    The following holds in $V^{\mathbb{P}}:$
    Suppose that $X\subseteq\omega^\kappa$ consists of $\mu$ many Cohen sequences added by $\mathbb{P}$ and $c\colon[X]^n\to\kappa$. 
    Then there is an $A\subseteq X$ of cardinality $\lambda$, a finite set $S\subseteq\kappa$, functions $\{F_\alpha\colon[A]^{\leq n}\setminus\{\varnothing\}\to X\mid\alpha\in\kappa\setminus S\}$, and a $\beta<\kappa$ such that for
    \begin{enumerate}
        \item for each $\alpha\in\kappa\setminus S$ and $a\in A$, $F_\alpha(\{a\})=a$. \label{id}
        \item for each $\alpha\in\kappa\setminus S$ and $\tau\subsetneq\sigma$, $F_\alpha(\tau)(\alpha)<F_\alpha(\sigma)(\alpha)$. \label{inc}
        \item For each $\alpha\in\kappa\setminus S$ and any maximal string $\langle a_i\mid i\leq n\rangle$ on $[A]^{\leq n}$, 
        \[c(\{F_\alpha(a_i)\mid 1\leq i\leq n\})=\beta.\] \label{PH}
        \item For any string $\langle a_i\mid i\leq m\rangle$ on $[A]^{\leq n-1}$ with $|a_0|\geq2$, the set
        \[\left\{c(a_0\cup\{F_\alpha(a_i)\mid 0\leq i\leq m\})\mid \alpha\in\kappa\setminus S\right\}\]
        is finite. 
        
        Note this set has size $n$ since $|a_0|=n-1-m$ and \[\max_{f\in a_0}f(\alpha)<F_\alpha(a_0)(\alpha)<F_\alpha(a_1)(\alpha)<\ldots<F_\alpha(a_m)(\alpha)\]
        by \ref{id} and \ref{inc} so all entries are distinct. \label{lf}
        
    \end{enumerate}
\end{lem}

\begin{rem}
It is perhaps worth comparing the principle of Lemma \ref{LPH} above to the principle $\PH_n$ defined in \cite[Definition 3.2]{DAHDL}. 
The first difference worth noting is the change of domain: while $\PH_n$ is a partition principle defined over all the real numbers, the principle of Lemma \ref{LPH} is only about large sets of Cohen sequences. 
The reason for the change is the requirement of item \ref{lf}, which makes extension of the functions $F_\alpha$ from $A$ to all of $X$ much more difficult. 
Individually, item \ref{id} may as well have been required for $\PH_n$ on a cofinal set, \ref{inc} and \ref{PH} are direct analogues of what appears in $\PH_n$, and item \ref{lf} is entirely new but automatic for $\PH_n$ since the principle gave only one function $F$. 
\end{rem}

\begin{proof}
Fix a condition $p$ such that
\begin{itemize}
    \item $p\Vdash|\{\alpha<\chi\mid\dot f_\alpha\in\dot X\}|=\mu$ and
    \item $p\Vdash \dot c\colon [\dot X]^n\to \kappa$. 
\end{itemize}
Let $Y$ be the set of all $\alpha<\chi$ for which there is a condition $p_\alpha\leq p$ such that $p_\alpha\Vdash\dot f_\alpha\in\dot X$; observe that $|Y|\geq\mu$ by assumption. 
For each $\alpha\in Y$, fix such a condition $p_\alpha$. 
Since $\mathbb{P}$ has $\mu$ as a precalibre, there exists a $Y'\subseteq Y$ of cardinality $\mu$ such that every finite subset of $\{p_\alpha\mid \alpha\in Y'\}$ has a lower bound. 
For each $a\in[Y']^{n}$, let $\langle q_{a,\ell}\mid \ell<\omega\rangle$ enumerate a maximal antichain of conditions below $\bigcup_{\alpha\in a}p_\alpha$ deciding the value of $\dot c(\{\dot f_\alpha\mid\alpha\in a\})$ to be some $\beta_{a,\ell}$. 
Let \[u(a,\ell)=\{\gamma<\chi\mid\dom(q_{a,\ell}\cap(\{\gamma\}\times\kappa)\neq\varnothing)\}.\]
For each $b\in[Y']^{2n-1}$ let $v_b=\bigcup\{u(a,\ell)\,\mid\,a\in[b]^n,\ell<\omega\}$. 
Define $F\colon[Y']^{2n-1}\to H(\kappa^+)$ as follows. 
First, for $b\in[Y']^{2n-1}$ and $\mathbf{m}\in[2n-1]^{n}$ and $\ell<\omega$, let
\[w_{\mathbf{m},\ell}^b=\{\eta<\operatorname{otp}(v_b)\,\mid\,v_b(\eta)\in u(b[\mathbf{m}],\ell)\}.\]
Note here we have identified sets of ordinals with an increasing sequence of ordinals to define $u(b[\mathbf{m}])$ and $v_b(\eta)$.

Then set
\[F(b)=\langle\langle \overline{q}_{b[\mathbf{m}],\ell},w_{\mathbf{m},\ell}^b,\beta_{b[\mathbf{m}],\ell}\rangle\,\mid\,\mathbf{m}\in[2n-1]^n,\ell<\omega\rangle,\]
where we recall that $\overline{q}$ is the collapsed version of $q$ as defined in section \ref{Prelim}.
By Fact \ref{DSys_lem}, there exists $H\in[Y']^{\lambda^+}$ such that
\begin{itemize}
    \item $F$ is constant on $[H]^{2n-1}$, taking value $\langle\langle \overline{q}_{\mathbf{m},\ell},w_{\mathbf{m},\ell},\beta_{\mathbf{m},\ell}\,\mid\,\mathbf{m}\in[2n-1]^{n},\ell<\omega\rangle$;
    \item $\langle v_b\,\mid\,b\in[H]^{2n-1}\rangle$ is a uniform $2n-1$ dimensional $\Delta$-system as witnessed by $\rho$ and $\langle\mathbf{r_m}\,\mid\,\mathbf{m}\subseteq 2n-1\rangle$.
\end{itemize}
By taking an initial segment if necessary, we may assume that $H$ has order type $\lambda^+$. 
Let $\langle v_{a,k}\,\mid\,a\in[H]^{<2n-1}, k\leq|a|\rangle$ be given by Lemma \ref{k_addable} whenever there is an $\alpha\in H$ which is $k$-addable for $a$. 

\begin{claim} \label{restrict}
Let $a\in[H]^{n-1}$, $k\leq n-1$, and $\ell<\omega$ and suppose that $\alpha,\alpha'\in H$ are both $k$-addable for $a$. 
Then $q_{a\cup\{\alpha\},\ell}\upharpoonright(v_{a,k}\times\kappa)=q_{a\cup\{\alpha'\},\ell}\upharpoonright(v_{a,k}\times\kappa)$. 
\end{claim}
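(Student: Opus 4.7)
The plan is to exploit the uniformity of the $\Delta$-system $\langle v_b \mid b \in [H]^{2n-1}\rangle$ together with the constancy of $F$ on $[H]^{2n-1}$. First I would pick witnesses: setting $\mathbf{m} = \{0, 1, \ldots, n-1\}$, I choose $b, b' \in [H]^{2n-1}$ with $b[\mathbf{m}] = a \cup \{\alpha\}$, $b'[\mathbf{m}] = a \cup \{\alpha'\}$, and $b[\{n, \ldots, 2n-2\}] = b'[\{n, \ldots, 2n-2\}]$ a common tail drawn from $H$ above $\max(a(n-2), \alpha, \alpha')$ (possible because $|H| = \lambda^+$). Then $b$ and $b'$ are aligned with $\mathbf{r}(b, b') = (2n-1) \setminus \{k\}$, since they coincide at every position except position $k$ (where $\alpha$ and $\alpha'$ live); by uniformity, $v_b$ and $v_{b'}$ are aligned with $\mathbf{r}(v_b, v_{b'}) = \mathbf{r}_{(2n-1) \setminus \{k\}}$, yielding $v_b(\eta) = v_{b'}(\eta)$ for every $\eta \in \mathbf{r}_{(2n-1) \setminus \{k\}}$.

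Next I would locate $v_{a,k}$ inside this common part. Iterating Lemma~\ref{k_addable} (and using the intersection property $\mathbf{r}_{\mathbf{m}_0 \cap \mathbf{m}_1} = \mathbf{r}_{\mathbf{m}_0} \cap \mathbf{r}_{\mathbf{m}_1}$ to nest the subscripts correctly), one obtains $v_{a,k} = v_b[\mathbf{r}_{n \setminus \{k\}}] = v_{b'}[\mathbf{r}_{n \setminus \{k\}}]$, and the inclusion $n \setminus \{k\} \subseteq (2n-1) \setminus \{k\}$ forces $\mathbf{r}_{n \setminus \{k\}} \subseteq \mathbf{r}_{(2n-1) \setminus \{k\}}$. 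So every $\gamma \in v_{a,k}$ has the form $v_b(\eta) = v_{b'}(\eta)$ for a single index $\eta \in \mathbf{r}_{n \setminus \{k\}}$.

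Finally I would apply the constancy of $F$ on $[H]^{2n-1}$: writing $w := w^b_{\mathbf{m}, \ell} = w^{b'}_{\mathbf{m}, \ell}$ and $\overline{q} := \overline{q}_{a \cup \{\alpha\}, \ell} = \overline{q}_{a \cup \{\alpha'\}, \ell}$, the collapsing construction of $\overline{p}$ tells us that for any $\gamma = v_b(\eta) = v_{b'}(\eta)$ and $\beta < \kappa$, the pair $(\gamma, \beta)$ lies in $\dom(q_{a \cup \{\alpha\}, \ell})$ iff $\eta \in w$ and $(|w \cap \eta|, \beta) \in \dom(\overline{q})$, in which case the value is $\overline{q}(|w \cap \eta|, \beta)$; the identical formula with the identical inputs describes $q_{a \cup \{\alpha'\}, \ell}(\gamma, \beta)$, so the two conditions agree on $v_{a,k} \times \kappa$. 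The main obstacle throughout is the three-layer bookkeeping---abstract positions $\eta \in \rho$, the support index set $w \subseteq \rho$, and the index $|w \cap \eta|$ into $\overline{q}$---but once one verifies that constancy of $F$ controls $w$ and $\overline{q}$ jointly, and that alignment of $v_b, v_{b'}$ on $\mathbf{r}_{(2n-1)\setminus\{k\}}$ makes all the supporting data coincide, the argument reduces to the slogan \emph{identical inputs, identical outputs}.
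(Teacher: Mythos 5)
Your proof is correct and follows essentially the same approach as the paper: pick $b, b' \in [H]^{2n-1}$ extending $a\cup\{\alpha\}$ and $a\cup\{\alpha'\}$ respectively by a common tail, note $\mathbf{r}(b,b')=(2n-1)\setminus\{k\}$, apply uniformity of the $\Delta$-system to align $v_b$ and $v_{b'}$ on $\mathbf{r}_{(2n-1)\setminus\{k\}}\supseteq\mathbf{r}_{n\setminus\{k\}}$ (which enumerates $v_{a,k}$), and then use constancy of $F$ (i.e., of $w$ and $\overline{q}$) on $[H]^{2n-1}$ to see that the two conditions are described by the same collapsed data at the same positions. Your write-up is if anything slightly more explicit than the paper's about the role of $w^b_{\mathbf{m},\ell}$ and the index bookkeeping $\eta\mapsto|w\cap\eta|$, but the underlying argument is the same.
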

\begin{proof}
We can assume that $\alpha\neq\alpha'$. 
Fix $c\in[H]^{n-1}$ with $\min c>\max(a\cup\{\alpha,\alpha'\})$ and let $b=a\cup\{\alpha\}\cup c$ and $b'=a\cup\{\alpha'\}\cup c$. 
Then $b$ and $b'$ are aligned with $\mathbf{r}(b,b')=2n-1\setminus\{k\}$, so $v_b$ and $v_{b'}$ are aligned with $\mathbf{r}(v_b,v_{b'})=\mathbf{r}_{2n-1\setminus\{k\}}$. Moreover, $v_{a,k}=v_b[\mathbf{r}_{n\setminus\{k\}}]=v_{b'}[\mathbf{r}_{n\setminus\{k\}}]$. 
By homogeneity of $H$, $F(b)=F(b')$ so $u(a\cup\{\alpha\},\ell)$ and $u(a\cup\{\alpha'\},\ell)$ occupy the same relative positions in $v_b$ and $v_{b'}$ respectively since $F$ records the positions of each $w^b_{\mathbf{m},\ell}$. 
Since $v_b$ and $v_{b'}$ are aligned, $u(a\cup\{\alpha\},\ell)$ and $u(a\cup\{\alpha'\},\ell)$ are aligned and since $v_{a\cup\{\alpha\}}\cap v_{a\cup\{\alpha'\}}$, $u(a\cup\{\alpha\},\ell)\cap u(a\cup\{\alpha'\},\ell)=v_{a,k}\cap u(a\cup\{\alpha\},\ell)=v_{a,k}\cap u(a\cup\{\alpha'\},\ell)$ as we chose $v_{a,k}$ to satisfy the conclusion of Lemma \ref{k_addable}. 
Moreover, by homogeneity of $H$, since the partition $F$ recorded the collapsed version of conditions, $\overline{q}_{a\cup\{\alpha\},\ell}=\overline{q}_{a\cup\{\alpha'\},\ell}=\overline{q}_{n,\ell}$. 
Thus, if $(\beta,\gamma)\in v_{a,k}\times\kappa$ is in the domain of $q_{a\cup\{\alpha\},\ell}$ with $\beta=u(a\cup\{\alpha\},\ell)(i)$ then it is also in the domain of $q_{a\cup\{\alpha'\},\ell}$ and the value of each is $\overline{q}_{n,\ell}(i,\gamma)$ and the same holds in reverse. 
\end{proof}

We now note that the values of $\overline{q}_{\mathbf{m},\ell}$ and $\beta_{\mathbf{m},\ell}$ are independent of $\mathbf{m}\in[2n-1]^n$. 
To see this, given $b\in[H]^{2n-1}$ and $\mathbf{m}\in [2n-1]^n$, there is a $b^*\in[H]^{2n-1}$ such that $b[\mathbf{m}]=b^*[n]$. 
Then
\[\overline{q}_{\mathbf{m},\ell}=\overline{q}_{b[\mathbf{m}],\ell}=\overline{q}_{b^*[n],\ell}=\overline{q}_{n,\ell}\]
and similarly for $\beta_{\mathbf{m},\ell}$. 

Let $\overline{q}_\ell$ and $\beta_\ell$ be the constant values of $\overline{q}_{\mathbf{m},\ell}$ and $\beta_{\mathbf{m},\ell}$ respectively. 
Now, let $H_0,...,H_{n-1}$ be disjoint subsets of $H$ such that $\operatorname{otp}(H_k)=\lambda$ and $H_k<H_{k'}$ for all $k<k'<n$. 
For each $k< n$ let $\delta_k=\min(H_k)$. 
For $a\in [H_0]^{\leq n-1}$, let $d_a=a\cup\{\delta_i\mid |a|\leq i< n\}$ and let $d^+=\{\delta_k\,\mid\,1\leq k< n\}$. 
For $\alpha\in H_0$, we write $d_{\alpha}$ to mean $d_{\{\alpha\}}=\{\alpha\}\cup d^+$. 
Let $q=q_{d_\varnothing,0}$ and let $\dot B$ be a $\mathbb{P}$-name for $\{\alpha\in H_0\,\mid\,q_{d_{\alpha},0}\in \dot G\}$. 
We claim that $q$ forces that there are functions $\dot F_j$ as in the lemma on the set $A=\{\dot f_\alpha\mid\alpha\in\dot B\}$. 
We first claim that $\dot B$ is forced to have cardinality $\lambda$. 
By regularity of $\lambda$, it suffices to show the following. 
\begin{claim}
$q$ forces that $\dot B$ is unbounded in $H_0$. 
\end{claim}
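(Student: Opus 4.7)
The plan is a standard density argument. To show $q\Vdash\dot A$ is unbounded in $H_0$, it suffices, by regularity of $\lambda$, to verify that for every $r\leq q$ and every $\alpha^*\in H_0$ there exist $\alpha\in H_0$ with $\alpha>\alpha^*$ and an extension $r^*\leq r$ with $r^*\Vdash\alpha\in\dot A$. Reading $\dot A$ as the name for those $\alpha\in H_0$ such that $q_{\{\alpha\}\cup d^+,0}\in\dot G$, this amounts to producing such an $\alpha$ for which $r$ and $q_\alpha:=q_{\{\alpha\}\cup d^+,0}$ are compatible; any common extension then does the job.

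The key structural inputs are Lemma \ref{k_addable} and Claim \ref{restrict}. First I would apply Lemma \ref{k_addable} with $a=d^+$ and $k=0$: since every $\alpha\in H_0\setminus\{\delta_0\}$ lies below $\delta_1=\min(d^+)$ and is therefore $0$-addable for $d^+$, the family $\{u(\{\alpha\}\cup d^+,0)\mid\alpha\in H_0\setminus\{\delta_0\}\}\cup\{u(d_\varnothing,0)\}$ forms a $\Delta$-system with root $v_{d^+,0}$. Next I would apply Claim \ref{restrict} to see that each $q_\alpha$ agrees with $q=q_{\{\delta_0\}\cup d^+,0}$ on $v_{d^+,0}\times\kappa$; since $r\leq q$, $r$ likewise agrees with $q_\alpha$ on $\dom(r)\cap(v_{d^+,0}\times\kappa)$.

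The only remaining source of conflict between $r$ and $q_\alpha$ is on the ``wing'' $(u(\{\alpha\}\cup d^+,0)\setminus v_{d^+,0})\times\kappa$. These wings are pairwise disjoint for distinct $\alpha$, by the $\Delta$-system property, and $\dom(r)$ has finite first-coordinate support, so only finitely many $\alpha\in H_0$ can produce a wing meeting $\dom(r)$. Since $|H_0|=\lambda$ and $\alpha^*$ is arbitrary, I choose some $\alpha>\alpha^*$ in $H_0$ avoiding this finite set; then $r\cup q_\alpha$ is a well-defined condition extending both, and it forces $q_\alpha\in\dot G$, i.e., $\alpha\in\dot A$.

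The main subtlety, which is mostly bookkeeping, is confirming that ``agreement on the root, together with disjointness of wings outside the support of $r$'' genuinely delivers compatibility of the partial functions $r$ and $q_\alpha$ rather than just a heuristic; this is precisely what the uniform higher-dimensional $\Delta$-system structure of $\langle v_b\rangle$ and the homogeneity extracted by Claim \ref{restrict} provide, so no further work beyond invoking them should be required.
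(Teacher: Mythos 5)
Your proposal is correct and takes essentially the same route as the paper: reduce to a density argument, use Lemma \ref{k_addable} applied to $d^+$ with $k=0$ to get a one-dimensional $\Delta$-system with root $v_{d^+,0}$, invoke Claim \ref{restrict} for agreement on the root, and pick $\alpha\in H_0$ whose wing misses the finite support of $r$. (One small imprecision: the $\Delta$-system the paper records is on the sets $v_{\{\alpha\}\cup d^+}$ rather than directly on the supports $u(\{\alpha\}\cup d^+,0)$, and it is the $v$-root $v_{d^+,0}$ that matters; since $u(\{\alpha\}\cup d^+,0)\subseteq v_{\{\alpha\}\cup d^+}$, your phrasing gives the same conclusion.)
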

\begin{proof}
Fix an arbitrary $\gamma\in H_0$ and an arbitrary $r\leq q$. 
By choice of $v_{d^+,0}$ and Lemma \ref{k_addable}, $\{v_{d_{\alpha}}\,\mid\,\alpha\in H_0\setminus\gamma\}$ is an infinite 1-dimensional $\Delta$-system with root $v_{d^+,0}$. 
Therefore, there exists an $\delta\in H_0\setminus\gamma$ such that $v_{d_\delta}\setminus v_{d^+,0}$ is disjoint from $\{\xi\mid\exists\zeta((\xi,\zeta)\in\dom(r))\}$. 
By Claim \ref{restrict} with $a=d^+$, $k=\ell=0$, $\alpha=\delta_0$, and $\alpha'=\delta$, $q_{d_{\delta},0}\upharpoonright(v_{d^+,0}\times\kappa)=q_{d_\varnothing,0}\upharpoonright(v_{d^+,0}\times\kappa)$. 
Since $r$ extends $q_{d_\varnothing,0}=q$, it is compatible with $q_{d_\delta,0}$. 
Since $\gamma$ was arbitrary, the claim is established.
\end{proof}

The $\delta_i$ are currently used as placeholders which must be replaced by other ordinals. 
The ultimate goal will be to define each $F_\alpha(a)$ to be a member of $H_{|a|-1}$; defining $F_\alpha(a)$ will be a matter of finding a sufficient replacement for $\delta_{|a|-1}$. 
The following claim is the main mechanism by which we will replace the $\delta_i$. 

\begin{claim} \label{exchange}
Suppose that $c_0,c_1\in[H]^{n-1}$, $k_0,k_1\leq n-1$, $\alpha\in H$ and $\ell_0,\ell_1<\omega$ are such that
\begin{itemize}
    \item $\alpha$ is $k_0$-addable for $c_0$
    \item $\alpha$ is $k_1$-addable for $c_1$
    \item $q_{c_0\cup\{\alpha\},\ell_0}$ and $q_{c_1\cup\{\alpha\},\ell_1}$ are compatible. 
\end{itemize}
Then for every $\alpha'\in H$ such that $\alpha'$ is $k_0$-addable for $c_0$ and $k_1$-addable for $c_1$, $q_{c_0\cup\{\alpha'\},\ell_0}$ and $q_{c_1\cup\{\alpha'\},\ell_1}$ are compatible. 
\end{claim}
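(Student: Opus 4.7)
The plan is to mimic the Delta-system homogeneity trick from Claim \ref{restrict}, now transferring \emph{compatibility} of a pair of conditions rather than pointwise equality on a shared restriction. Assuming \(\alpha \neq \alpha'\) (the claim is trivial otherwise), let \(t = |c_0 \cap c_1|\) and pick \(d \in [H]^t\) with \(\min d > \max(c_0 \cup c_1 \cup \{\alpha,\alpha'\})\); this is possible since \(\operatorname{otp}(H) = \lambda^+\). Set \(b := c_0 \cup c_1 \cup \{\alpha\} \cup d\) and \(b' := c_0 \cup c_1 \cup \{\alpha'\} \cup d\), both in \([H]^{2n-1}\). Since \(\alpha, \alpha'\) are each \(k_0\)-addable for \(c_0\) and \(k_1\)-addable for \(c_1\), they occupy the same slot inside \(c_0 \cup c_1\); combined with \(d\) lying above everything, this forces \(b\) and \(b'\) to be aligned with \(\mathbf{r}(b,b') = (2n-1) \setminus \{j\}\), where \(j\) is the shared index of \(\alpha\) in \(b\) and of \(\alpha'\) in \(b'\). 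By uniformity, \(v_b\) and \(v_{b'}\) are then aligned with \(\mathbf{r}(v_b, v_{b'}) = \mathbf{r}_{(2n-1) \setminus \{j\}}\).

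Next, pick \(\mathbf{m}_0, \mathbf{m}_1 \in [2n-1]^n\) such that \(b[\mathbf{m}_0] = c_0 \cup \{\alpha\}\) and \(b[\mathbf{m}_1] = c_1 \cup \{\alpha\}\); automatically \(b'[\mathbf{m}_0] = c_0 \cup \{\alpha'\}\) and \(b'[\mathbf{m}_1] = c_1 \cup \{\alpha'\}\). The homogeneity of \(F\) on \([H]^{2n-1}\) yields \(\overline{q}_{b[\mathbf{m}_s], \ell_s} = \overline{q}_{b'[\mathbf{m}_s], \ell_s} = \overline{q}_{\mathbf{m}_s, \ell_s}\) and \(w^b_{\mathbf{m}_s, \ell_s} = w^{b'}_{\mathbf{m}_s, \ell_s} = w_{\mathbf{m}_s, \ell_s}\) for \(s = 0, 1\). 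In particular, the positions of \(u(c_0 \cup \{\alpha\}, \ell_0)\) inside \(v_b\) and of \(u(c_0 \cup \{\alpha'\}, \ell_0)\) inside \(v_{b'}\) are recorded by the same rank-set \(w_{\mathbf{m}_0, \ell_0}\), and analogously on the \(c_1\)-side.

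The crux is then the observation that compatibility of \(q_{c_0 \cup \{\alpha\}, \ell_0}\) and \(q_{c_1 \cup \{\alpha\}, \ell_1}\) collapses to a condition stated purely in terms of the quadruple \((\overline{q}_{\mathbf{m}_0, \ell_0}, \overline{q}_{\mathbf{m}_1, \ell_1}, w_{\mathbf{m}_0, \ell_0}, w_{\mathbf{m}_1, \ell_1})\): a point \((\xi, \gamma)\) lies in both domains precisely when \(\xi = v_b(\eta)\) for some \(\eta \in w_{\mathbf{m}_0, \ell_0} \cap w_{\mathbf{m}_1, \ell_1}\) with \((i_s, \gamma) \in \dom(\overline{q}_{\mathbf{m}_s, \ell_s})\) for \(s=0,1\), where \(i_s\) is the rank of \(\eta\) in \(w_{\mathbf{m}_s, \ell_s}\); at any such point the two conditions take the values \(\overline{q}_{\mathbf{m}_s, \ell_s}(i_s, \gamma)\). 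The identical criterion governs compatibility of the primed pair inside \(v_{b'}\), so compatibility transfers as required. The main obstacle I foresee is bookkeeping between \(v_b\)-coordinates and \(u(a,\ell)\)-coordinates—tracked by the \(w\) sets—in order to justify this reduction cleanly; this is forced by the definitions of \(u\) and \(v_b\) but needs to be spelled out once.
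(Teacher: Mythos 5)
Your argument is essentially the paper's own proof: extend $c_0\cup c_1\cup\{\alpha\}$ to a set $b\in[H]^{2n-1}$ by adding elements above, form $b'$ by swapping $\alpha$ for $\alpha'$, note $b$ and $b'$ are aligned with a single index exchanged, and use the homogeneity of $F$ (the collapsed conditions $\overline{q}$ and the position sets $w$ agreeing on $b$ and $b'$) together with the uniform $\Delta$-system structure to transfer compatibility. The bookkeeping you flag at the end is precisely what the paper carries out (comparing positions $\eta$ in $v_b$ versus $v_{b'}$ and evaluating via $\overline{q}$), so the proposal is correct and takes the same route.
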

\begin{proof}
We may assume that $\alpha\neq\alpha'$. 
Let $c=c_0\cup c_1$; then $|c|\leq 2n-2$. 
Let $b\in[H]^{2n-1}$ be a (possibly trivial) end-extension of $c\cup\{\alpha\}$. 
Let $i^*$ be such that $b(i^*)=\alpha$ and let $b'=(b\setminus\{\alpha\})\cup\{\alpha'\}$.
Then $b$ and $b'$ are aligned with $\mathbf{r}(b,b')=2n-1\setminus\{i^*\}$. 
Let $\mathbf{m}_0,\mathbf{m}_1$ be such that $c_i\cup\{\alpha\}=b[\mathbf{m}_i]$; then $c_i\cup\{\alpha'\}=b'[\mathbf{m}_i]$. 

Suppose that $(\gamma',\delta)\in\dom(q_{c_0\cup\{\alpha'\},\ell_0})\cap\dom(q_{c_1\cup\{\alpha'\}},\ell_1)$. 
Let $\gamma'=u_{b'}(\eta)$. 
Since $F(b)=F(b')$, the relative position of $u(c_i\cup\{\alpha\},\ell_i)$ within $v_b$ is the same as the relative position of $u(c_i\cup\{\alpha'\},\ell_i)$ within $v_{b'}$. 
Moreover, $\overline{q}_{c_i\cup\{\alpha\},\ell_i}=\overline{q}_{c_i\cup\{\alpha'\},\ell_i}$. 
In particular, if $\gamma=u_b(\eta)$ then $(\gamma,\delta)\in\dom(q_{c_0\cup\{\alpha\},\ell_0})\cap\dom(q_{c_1\cup\{\alpha\}},\ell_1)$ and $q_{c_i\cup\{\alpha\},\ell_i}(\gamma,\delta)=q_{c_i\cup\{\alpha'\},\ell_i}(\gamma',\delta)$. 
Since $q_{c_0\cup\{\alpha\},\ell_0}$ and $q_{c_1\cup\{\alpha\},\ell_1}$ are compatible, 
\[q_{c_0\cup\{\alpha'\},\ell_0}(\gamma',\delta)=q_{c_0\cup\{\alpha\},\ell_0}(\gamma,\delta)=q_{c_1\cup\{\alpha\},\ell_1}(\gamma,\delta)=q_{c_1\cup\{\alpha'\},\ell_1}(\gamma',\delta),\]
as desired. 
\end{proof}
We now turn more directly to constructing the objects in the statement of the lemma. 
If $r\Vdash a\in[\dot B]^{\leq n}$ then for each $\alpha\in a$, $r\Vdash q_{d_\alpha,0}\in\dot G$ by definition of $\dot B$. 
In particular, since $q_{d_\alpha,0}\leq p_\alpha$ by definition of $q_{d_\alpha,0}$, we must have $r\Vdash\forall\alpha\in a(p_\alpha\in\dot G)$. 
In particular, since $\langle q_{d_a,\ell}\mid \ell<\omega\rangle$ enumerates a maximal antichain below $p_a$ and $r\leq p_a$, we may let $\dot\ell_{a}$ be a name such that $r$ forces $\dot\ell_a$ is the unique $\ell$ such that $q_{d_a,\ell}\in G$. 
Let $\dot\beta$ be a name for $\dot\beta_{\dot\ell_\varnothing}$.
Let $\dot S_{a}$ be a name for 
\[\{\gamma<\kappa\,\mid\,(\delta,\gamma)\in\dom(q_{d_{\dot a},\ell_{a}})\text{ for some }\delta<\chi\},\]
observing that $\dot S_{a}$ is forced to be finite. 
Moreover, by homogeneity of $H$ and the fact that $a\subseteq H_0\subseteq H$, $\dot S_{a}$ is simply 
\[\{\gamma<\kappa\,\mid\,(\delta,\gamma)\in\dom(\overline{q}_{\ell_{a}})\text{ for some }\delta<\kappa\}.\]
Let $\dot S=\dot S_\varnothing$.
The key idea for ensuring $q$ forces (\ref{lf}) of Lemma \ref{LPH} in $V[G]$ is that $q$ forces that whenever $\langle a_i\mid i\leq m\rangle$ is a string on $[A]^{\leq n-1}$ with $a_0\neq\varnothing$ and $\alpha\not\in S_{a_0}$ then $c(a_0\cup\{F_\alpha(a_i)\mid i\leq m\})=\beta_{\dot\ell_{a_0}}$ and that if $a_0=\varnothing$ and $\alpha\not\in S$ then $c(\{F_\alpha(a_i)\mid 1\leq i\leq n\})=\beta$.

We will show that $q$ forces the existence of ordinals
\[\langle \varepsilon_a^\alpha\in H_{|a|-1}\,\mid\,a\in[B]^{\leq n}\setminus\{\varnothing\}, \alpha\in\kappa\setminus \dot S\rangle\]
with the intention of setting $F_\alpha(\{f_\alpha\mid\alpha\in a\})=f_{\varepsilon_a^\alpha}$. 
The ordinals $\varepsilon^\alpha_a$ will satisfy a few additional properties; to state these, we need some further notation. 
If $\vec a=\langle a_i\mid 0\leq i\leq m\rangle$ is a string on $[A]^{\leq j}$ for some $j\leq n$, we denote
\[d_{\vec a}^\alpha=a_0\cup\{\varepsilon_{a_i}^\alpha\,\mid\,0\leq i\leq m\}\cup\{\delta_k\mid j\leq k<n-1\}\]
if $|a_0|\geq2$ and 
\[d_{\vec a}^\alpha=\{\varepsilon_{a_i}^\alpha\mid 1\leq i\leq m\}\cup\{\delta_k\mid j\leq k<n\}\]
if $a_0=\varnothing$.
We will ensure $d_{\vec a}^\alpha$ is always in $[H]^n$
\begin{claim} \label{LPH_main}
$q$ forces that there exist ordinals 
\[\langle\varepsilon_a^\alpha\,\mid\,a\in[\dot B]^{\leq n}\setminus\varnothing, \alpha\in\kappa\setminus \dot S\rangle\]
satisfying
\begin{enumerate}
    \item $\varepsilon_a^\alpha\in H_{|a|-1}$ for all nonempty $a\in[\dot B]^{\leq n}$ and $\alpha\in \kappa\setminus \dot S$; \label{hom}
    \item $\varepsilon_{\{\beta\}}^\alpha=\beta$ for all $\beta\in \dot B$ and all $\alpha\in\kappa\setminus \dot S$; \label{idy}
    \item For $\varnothing\subsetneq a\subsetneq b$ and all $\alpha\in\kappa\setminus \dot S$, $\dot f_{\varepsilon_a^\alpha}(\alpha)<\dot f_{ \varepsilon_b^\alpha}(\alpha)$; \label{incr}
    \item For every $m$ with $1\leq m\leq n$, every string $\vec a$ on $[\dot B]^{\leq m}$ with $|a_0|\neq 1$ and every $\alpha\not\in \dot S_{a_0}$,
    \[q_{d_{\vec a}^\alpha,\dot\ell_{a_0}}\in\dot G.\] 
    \label{cons}
\end{enumerate}
\end{claim}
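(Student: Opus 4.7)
The plan is to define the $\dot\varepsilon_a^\alpha$ by induction on $j = |a|$, showing at each stage that $q$ forces the existence of such ordinals satisfying (1)--(4) restricted to strings $\vec{a}$ with $|a_m|\le j$. The base case $j=1$ is direct: setting $\varepsilon_{\{\beta\}}^\alpha = \beta$ satisfies (1) and (2) immediately, makes (3) vacuous, and gives $d_{\vec{a}}^\alpha = d_{\{\beta\}}$ for the unique length-$1$ string $\vec{a} = \langle\{\beta\}\rangle$, so (4) follows from the defining property of $\ell_{\{\beta\}}$.

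For the inductive step with $j\ge 2$, fix $a\in[A]^j$ and $\alpha\notin S_\varnothing$. The key observation is that substituting $\varepsilon_a^\alpha := \delta_{j-1}$ into any string $\vec{a} = \langle a_1,\dots,a_m\rangle$ with $a_m = a$ yields $d_{\vec{a}}^\alpha = d_{\vec{a}'}^\alpha$, where $\vec{a}' = \langle a_1,\dots,a_{m-1}\rangle$ is a string on $[A]^{\le j-1}$, so the inductive hypothesis already provides $q_{d_{\vec{a}'}^\alpha, \ell_{a_1}} \in G$. I will show that the set
\[T := \bigl\{\eta\in H_{j-1} \bigm| q_{d_{\vec{a}}^\alpha[\varepsilon_a^\alpha:=\eta], \ell_{a_1}} \in G \text{ for every string } \vec{a}\text{ on }[A]^{\le j}\text{ ending at }a\bigr\}\]
has cardinality $\lambda$. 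First, Claim \ref{exchange} applied pairwise to the (finitely many) strings ending at $a$ shows that for any $\eta\in H_{j-1}$, the candidate conditions $\bigl\{q_{d_{\vec{a}_i}^\alpha[\eta], \ell_{a_1^{(i)}}}\bigr\}_i$ are pairwise compatible, since replacing $\eta$ by $\delta_{j-1}$ yields conditions all lying in $G$. Claim \ref{restrict} further shows that the portions of these candidate conditions on the $\Delta$-root $v_{c,j-1}\times\kappa$ agree with the corresponding $\delta_{j-1}$-versions.

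To deduce $|T|=\lambda$, I will run a density argument inside $\operatorname{Add}(\omega,\chi)$: given $r\le q$ in the generic filter and any ordinal rank $\gamma<\lambda$, I would choose $\eta\in H_{j-1}$ of order-type at least $\gamma$ whose new $\Delta$-coordinates (those outside the root) are disjoint from $\dom(r)$; then the finite union of $r$ with the candidate conditions is a legal common refinement (the root portion extending $r$ by the inductive hypothesis combined with root-agreement, the remaining portion extending $r$ vacuously) that forces $\eta\in T$. For condition (3), the value $M := \max_{b\subsetneq a} f_{\varepsilon_b^\alpha}(\alpha)$ is a natural number in $V[G]$, and mutual Cohen genericity of the sequences indexed by $H_{j-1}$ ensures $\{\eta\in H_{j-1}\mid f_\eta(\alpha)>M\}$ is cofinal; intersecting with $T$ yields a nonempty set from which to pick $\varepsilon_a^\alpha$.

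The main technical obstacle will be uniformity across the several strings ending at $a$: a single $\eta$ must satisfy the filter-membership requirement for every such string simultaneously. This is precisely what the combination of Claim \ref{exchange} (for pairwise compatibility of the candidates) and Claim \ref{restrict} (for agreement with $r$ on already-decided coordinates) supplies, so that the finite union $\bigcup_i q_{d_{\vec{a}_i}^\alpha[\eta], \ell_{a_1^{(i)}}}$ is indeed a well-defined condition extending $r$, after which the standard Cohen density extraction yields the required $\eta$.
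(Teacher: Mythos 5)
Your proposal follows the same overall strategy as the paper: induction on $|a|$, the observation that plugging $\delta_{j-1}$ into the last slot reduces a length-$m$ string ending at $a$ to a length-$(m-1)$ string on $[A]^{\leq j-1}$, and the use of Claims \ref{exchange} and \ref{restrict} together with the $\Delta$-system structure and disjointness from $\dom(r)$ to build a common extension. That is all aligned with what the paper does.

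The one place your write-up is not quite right is the treatment of condition (3). You define $T$ as the set of $\eta\in H_{j-1}$ that work for (4), show $|T|=\lambda$ by a density argument, separately observe that $\{\eta\in H_{j-1}\mid f_\eta(\alpha)>M\}$ is cofinal, and then assert that their intersection is nonempty. Two cofinal subsets of a set of regular order type $\lambda$ need not meet, and neither do two size-$\lambda$ subsets; so as literally stated the final ``intersecting'' step does not close the argument. The fix, and what the paper actually does, is to fold the requirement on $f_\eta(\alpha)$ into the \emph{same} density argument that produces $\eta$: after forming $s_0 = r\cup\bigcup q_{\ldots}$ (which is a condition because the tails of the relevant $v$'s were chosen disjoint from $\dom(r)$), one observes that $(\eta,\alpha)\notin\dom(s_0)$ --- this uses homogeneity of $H$ and the definition of $S_{a_1}$, i.e.\ that $\alpha\notin S_{a_1}$ --- and then extends $s_0$ one more step to assign $(\eta,\alpha)$ a value exceeding the (already-decided) $f_{\varepsilon_{a'}^\alpha}(\alpha)$ for $a'\subsetneq a$. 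This single extension simultaneously forces $\eta\in T$ and $f_\eta(\alpha)>M$, which is what you need. It is a routine Cohen step, but the ``post-hoc intersection'' framing you used skips over the key point that $(\eta,\alpha)$ is still undecided; that fact is exactly why $\alpha\notin S_{a_1}$ appears in the hypothesis of (4).

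One further stylistic note: since $\dot A$ is a name, the inductive step should be phrased as a density argument over conditions $r\leq q$ deciding $\dot a$, $\dot\ell_{a'}$, the earlier $\dot\varepsilon_{a'}^\alpha$'s, and the relevant values $\dot f_{\varepsilon_{a'}^\alpha}(\alpha)$, rather than as a construction inside $V[G]$ with a side density argument to show $|T|=\lambda$. Your version can be rephrased this way without real change, but as written it blurs where the construction is happening.
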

\begin{proof}
Note that there is a natural candidate for $\epsilon_a^\alpha$, namely $\delta_{|a|-1}$: setting $\varepsilon^\alpha_a=\delta_{|a|-1}$ would make $d^\alpha_{\vec a}=d^\alpha_{\vec a\setminus a}$ for any string $\vec a$ on $a$. In particular, so long as we had ensured properties (\ref{hom})-(\ref{cons}) for sets of size $|a|-1$, setting $\epsilon_a^\alpha=\delta_{|a|-1}$ would maintain all of (\ref{hom}), (\ref{idy}), and (\ref{cons}). 
The only problem is in (\ref{incr}). 
The main difficulty will be in maintaining (\ref{cons}) while obtaining (\ref{incr}).
We will do this by showing that any condition below $q$ can be extended to force that we may find a suitable replacement for $\delta_{|a|-1}$ to go from $d_{\vec \sigma\setminus a}^\alpha$ to $d_{\vec\sigma}^\alpha$ for each string $\vec\sigma$ on $a$. 

The construction is by induction on $|a|$. 
If $a$ and $b$ have the same size, they could both appear in one of the strings required in (\ref{cons}); however, such strings would also necessarily involve sets of larger size (at least that of $|a\cup b|$). then there are no requirements on building $\varepsilon^\alpha_a$ and $\varepsilon^\alpha_b$ that mention $a$ and $b$ but not $a\cup b$ (of larger size). 
In particular, so long as we can maintain these properties inductively on size, we may handle a single set $a$ at a time. 
Moreover, the requirements are independent of $\alpha$, so we may again handle one $\alpha$ at a time.
Our strategy will be to show by induction on $|a|$ that once names have been fixed for any subset of $a$ to meet the requirements involving sets of size $<|a|$, any $r\leq q$ may be extended to force some value of $\varepsilon^\alpha_a$ satisfying the requirements involving sets of size $<|a|$ together with $a$.

If $|a|=1$, so $a=\{\beta\}$ for some $\beta$, then setting $\varepsilon_a^\alpha=\beta$ will do; property (\ref{hom}) follows since $B\subseteq H_0$, property (\ref{idy}) is immediate, property (\ref{incr}) is vacuously satisfied, and property $(4)$ also follows because $B$ was defined to be the collection of $\beta\in H_0$ with $q_{\beta,0}\in\dot G$, noting that $\ell_{\{\beta\}}=0$ and the only strings that are relevant for (\ref{cons}) are those of the form $\langle \varnothing,\{\beta\}\rangle$. 

Suppose that $r\leq q$ forces that $\dot a\in[B]^m$. We may assume that $r$ decides the value of $\dot S$ to be some $S$, so fix an $\alpha\in\kappa\setminus S$. 
By extending $r$ if necessary, we may assume that $r$ decides 
\begin{itemize}
    \item $\dot a=\check a$ for some $a\subseteq H_0$
    \item the values of $\varepsilon_{a'}^\alpha$ and $\ell_{a'}$ for each $a'\subsetneq a$
    \item the value of $\ell_a$
    \item the value of $\dot f_{\varepsilon_{a'}^\alpha}(\alpha)$ for each $a'\subsetneq a$
\end{itemize}

The strings in which we need to replace $\delta_i$ for some value of $i$ are all among the strings on $[a]^{\leq|a|-1}$. 
For such a string, the value to replace will be $\delta_{|a|-1}$; the only exception where $\delta_{|a|-1}$ does not appear and therefore does not need to be replaced is when $|a|=n$ and $a_0\neq\varnothing$; we will say strings where $\delta_{|a|-1}$ must be replaced are \emph{relevant}.
The goal is to show that $\delta_{|a|-1}$ may be replaced with some $\varepsilon\in H_{|a|-1}$ such that there is some $s\leq r$ which forces that setting $\varepsilon^\alpha_a=\varepsilon$ maintains properties (\ref{hom})-(\ref{cons}); in particular, we maintain (\ref{cons}) while adding (\ref{incr}).

If $\vec a$ is a string on $[a]^{\leq|a|-1}$, then since $r\Vdash q_{d_{\vec a}^\alpha,\ell_{a_0}}\in\dot G$, we must have $r\leq q_{d_{\vec a}^\alpha,\ell_{a_0}}$. 
Now, note that if $\vec a$ and $\vec b$ are relevant strings on $[a]^{\leq|a|-1}$, there is an $\beta$ (namely $\delta_{|a|-1}$) such that $q_{d^\alpha_{\vec a}\setminus\delta_{|a|-1}\cup\{\beta\},\ell_{a_1}}$ and $q_{d^\alpha_{\vec a}\setminus\delta_{|a|-1}\cup\{\beta\},\ell_{b_1}}$ are compatible. 
Therefore, by Claim \ref{exchange}, for every $\varepsilon\in H_{|a|-1}$, $q_{d_{\vec a}\setminus\delta_{|a|-1}\cup\{\varepsilon\},\ell_{a_1}}$ and $q_{d_{\vec a}\setminus\delta_{|a|-1}\cup\{\varepsilon\},\ell_{b_1}}$ are compatible. 
Now, by choice of the $v_{a',k}$, note that for every string $\vec a$, $\{v_{d_{\vec a}\setminus\delta_{|a|-1}\cup\{\varepsilon\},\ell_{a_0}}\,\mid\,\varepsilon\in H_{|a|-1}\}$ is an infinite $\Delta$-system with root $v_{d_{\vec a}\setminus\{\delta_{|a|-1}\},|a|-1}$, so there is an $\varepsilon $ such that $v_{d_{\vec a}\setminus\delta_{|a|-1}\cup\{\varepsilon\},\ell_{a_0}}\setminus v_{d_{\vec a}\setminus\{\delta_{|a|-1}\},|a|-1}$ is disjoint from $\{\gamma<\chi\mid\exists\delta(\delta,\gamma)\in\dom(r)\}$ for every such $\vec a$. 
By Claim \ref{restrict}, note that $q_{d_{\vec a}\setminus\delta_{|a|-1}\cup\{\varepsilon\},\ell_{a_0}}\upharpoonright (v_{d_{\vec a}\setminus\{\delta_{|a|-1}\},|a|-1}\times\kappa)=q_{d_{\vec a},\ell_{a_0}}\upharpoonright (v_{d_{\vec a}\setminus\{\delta_{|a|-1}\},|a|-1}\times\kappa)$.
Therefore, 
\[s_0=r\cup \bigcup\left\{q_{d^\alpha_{\vec a}\setminus\{\delta_{|a|-1}\}\cup\{\varepsilon\},\ell_{a_0}}\,\middle|\,\vec a\text{ is a string on }[a]^{\leq|a|-1},\alpha\not\in S_{a_0}\right\}\]
is a condition extending $r$. 
By choice of $\varepsilon$, there is no $\beta$ with $(\varepsilon,\beta)\in\dom(r)$ and by choice of $S_{a_0}$ there is no $\vec a$ and $\beta$ with $\alpha\not\in S_{a_0}$ but $(\beta,\alpha)\in\dom(q_{d^\alpha_{\vec a}\setminus\{\delta_{|a|-1}\}\cup\{\varepsilon\},\ell_{a_0}})$. 
In particular, $(\varepsilon,\alpha)\not\in\dom(s_0)$. 
Therefore, we extend $s_0$ to define $\dot f_\varepsilon(\alpha)$ to be larger than $\dot f_{\varepsilon_{a'}^\alpha}(\alpha)$ for any $a'\subsetneq a$. 
This $s$ is readily verified to force that $\varepsilon_a^\alpha=\varepsilon$ satisfies all of the desired conditions. 
\end{proof}
To verify the conclusions of Lemma \ref{LPH}, we work in $V[G]$. 
For $a\in [A]^{\leq n}\setminus\emptyset$, set $F_{\alpha}(\{f_\gamma\,\mid\,\gamma\in a\})=f_{\varepsilon_a^\alpha}$ as constructed in Claim \ref{LPH_main}. 
The Conclusion (\ref{id}) of Lemma \ref{LPH} is immediate from requirement (\ref{idy}) on $\varepsilon_{\{\beta\}}^\alpha$ from Claim \ref{LPH_main}. 
Conclusion (\ref{inc}) of Lemma \ref{LPH} is immediate from requirement (\ref{incr}) of Claim \ref{LPH_main}. 
For the third conclusion, if $\vec a$ is a maximal string on $[A]^{\leq n}$ then since $q_{d_{\vec a}^\alpha,0}\in\dot G$ by item (\ref{hom}) of Claim \ref{LPH_main}, 
\[c\left(\{F_\alpha(a_i)\mid 1\leq i\leq n\}\right)=\beta,\]
where $\beta=\beta_{\ell_0}$ was defined in the paragraph proceeding Claim \ref{exchange}.
For conclusion (\ref{lf}) of Lemma \ref{LPH}, if $\vec a=\langle a_i\mid i\leq m\rangle$ is a string on $[A]^{\leq n-1
}$ with $a_0\neq\varnothing$, then since $q_{d_{\vec a}^\alpha,\ell_{a_0}}\in\dot G$ whenever $\alpha\not\in S_{a_0}$ by item (\ref{hom}) of Claim \ref{LPH_main}, 
\[c\left(a_0\cup\{F_\alpha(a_i)\mid 0\leq i\leq m\}\right)=\beta_{a_0}.\]
\end{proof}

\section{From Partition to Trivialization}
\subsection{Trivialization on a Large Set}
Suppose that $\mathcal{G}$ is an $\Om_{\kappa}$ system and let $\Phi$ be an $n$-coherent family for $\mathcal{G}$ indexed over $X^n$. 
\begin{defn}
$c_\Phi\colon[X]^{n+1}\to[\kappa]^{<\omega}$ is defined by setting $c_\Phi(\mathbf{x})$ to be the support of $d\Phi(\mathbf{x})$; that is, $c_\Phi(\mathbf{x})=\{\alpha<\kappa\mid d\Phi(\mathbf{x})(\alpha)\neq0\}$ for some enumeration $\vec x$ of $\mathbf{x}$. 
Note that $d\Phi$ is indexed by sequences, so we do need to pass from a set to a sequence. 
However, the support of $d\Phi$ is independent of the choice of enumeration of $\mathbf{x}$.
\end{defn}

We now demonstrate how to produce a trivialization of $\Phi\upharpoonright A$ for some large set of Cohen sequences $A$ from the conclusions of Lemma \ref{LPH} applied to $c_\Phi$. 

\begin{lem} \label{large_triv}
Suppose that $\lambda$ is $<\kappa^+$-inaccessible and $\mu=\sigma(\lambda^+,2n+1)$. 
Then for any $\chi\geq\mu$, the following is forced by $\mathbb{P}=\operatorname{Fn}(\chi\times\kappa,\omega)$:
Suppose that $\mathcal{G}$ is an $\Om_{\kappa}$ system and $\Phi$ is an $n$-coherent family corresponding to $\mathcal{G}$ indexed by $X^n$ with $X$ containing at least $\mu$ many of the Cohen sequences added by $\mathbb{P}$. 
Then there is an $A\subseteq X$ containing at least $\lambda$ many Cohen sequences added by $\mathbb{P}$ such that $\Phi\upharpoonright A$ is type $II$ trivial. 
\end{lem}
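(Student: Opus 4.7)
The plan is to reduce the problem to the partition principle of Lemma \ref{LPH} and read off a trivialization from the partition data. First, apply Lemma \ref{LPH} with $n+1$ in place of $n$ to the coloring $c_\Phi \colon [X]^{n+1} \to [\kappa]^{<\omega}$, using a fixed bijection $[\kappa]^{<\omega} \cong \kappa$ to view $c_\Phi$ as a coloring into $\kappa$. The hypothesis $\mu = \sigma(\lambda^+, 2n+1)$ exactly matches $\sigma(\lambda^+, 2(n+1)-1)$ required by that lemma. This yields $A \subseteq X$ of cardinality $\lambda$ consisting of Cohen sequences, finite sets $\{S_a \mid a \in [A]^{\leq n+1}\}$, functions $\{F_\alpha \colon [A]^{\leq n+1} \setminus \emptyset \to X\}$, and finite sets $\{\beta_a \subseteq \kappa\}$ such that, for any string $\langle a_i \rangle$ on $[A]^{\leq n+1}$ and $\alpha \notin S_{a_1}$, the support of $d\Phi$ at $a_1 \cup \bigcup_{i \geq 2} F_\alpha(a_i)$ is exactly $\beta_{a_1}$.

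Second, construct the trivialization $\Psi \in K^{n-1}(G \upharpoonright A)$. For each $\vec f \in [A]^n$, define $\Psi_{\vec f}$ coordinate by coordinate: for every $\alpha \notin S_{\vec f}$, the function $F_\alpha$ produces a canonical $(n+1)$-element extension of $\vec f$, and the coboundary of $\Phi$ evaluated at this extension is, by the partition, a finitely supported element whose support depends only on the subset and not on $\alpha$. Expanding $d\Phi$ at this extension lets us isolate the contribution of $\Phi_{\vec f}(\alpha)$ modulo a signed sum of $\Phi$-values controlled by proper subsets, and we take this ``controlled'' piece as $\Psi_{\vec f}(\alpha)$. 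The finiteness of $\beta_{\vec f}$, together with an exchange argument analogous to Claim \ref{exchange}, forces $\Psi_{\vec f}(\alpha) = 0$ off a finite subset of $\kappa$, so $\Psi_{\vec f}$ genuinely lies in $G_{\vec f}$.

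Third, verify $d\Psi = d\Phi$ on $[A]^{n+1}$. Given $\vec g \in [A]^{n+1}$ and a coordinate $\alpha$, expand both sides and use the $n$-coherence of $\Phi$ together with the freeze-the-first-entry conclusion of the partition to cancel every term except $d\Phi(\vec g)(\alpha)$. The homogeneity of the partition is what ensures that substituting $F_\alpha$-values for various entries yields $\Phi$-values whose coboundaries agree on large enough subsets of $\kappa$, so the telescoping is valid at almost every $\alpha$; finiteness of the $\beta_a$'s and $S_a$'s handles the remaining coordinates coordinate by coordinate.

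The main obstacle is the second step: producing $\Psi_{\vec f}$ that is simultaneously finitely supported and has the correct coboundary. The partition gives control only on $(n+1)$-tuples arising from applying $F_\alpha$ to a string, whereas $d\Psi = d\Phi$ must hold for \emph{every} $(n+1)$-tuple in $A$. The bridge is the combinatorial flexibility of the $F_\alpha$'s: by varying $\alpha$ we can replace entries of a string by essentially any element of $A$ while preserving the partition data, so every $(n+1)$-tuple is covered by the uniformity after a suitable exchange. Orchestrating these replacements so that the signs in the alternating complex $K^{n-1}$ conspire to produce finite support in all coordinates, and not just modulo finite error, is the technical heart of the argument.
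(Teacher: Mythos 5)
You have correctly identified the opening move—apply Lemma \ref{LPH} to the support coloring $c_\Phi\colon[X]^{n+1}\to[\kappa]^{<\omega}$ with the parameters $\mu=\sigma(\lambda^+,2(n+1)-1)$—and this matches the paper's proof exactly. You have also correctly diagnosed that the essential difficulty is the construction of a $\Psi\in K^{n-1}(G\upharpoonright A)$ that is simultaneously finitely supported and satisfies $d\Psi=d\Phi$ off a finite set of coordinates. But you have not actually carried out that construction; your second and third steps are a description of what a construction \emph{would have to achieve}, and you say as much at the end. As written, the proposal reduces the lemma to the right partition principle and then stops at precisely the point where the work begins.

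To fill the gap, the paper sets up a formal recursion. One works in free abelian groups $\mathrm{Free}(-)$ over finite tuples of elements of $\omega^\kappa$, with two operations: the alternating boundary $d$, and an appending operation $*g$, satisfying $d(x*y)=d(x)*y+(-1)^s x$. One then defines, for each $\alpha\notin S_\varnothing$ and each $s\leq n$, three families of formal expressions $A_s^\alpha$, $C_s^\alpha$, $S_s^\alpha$ by the simultaneous recursion $A_1=0$, $C_s^\alpha(\tau)=e(\tau)-\sum_i(-1)^iA_s(\tau^i)$, $S_s^\alpha(\tau)=d(C_s(\tau)*F_\alpha(\tau))$, and $A_{s+1}(\tau)=(-1)^{s+1}C_s(\tau)*F_\alpha(\tau)$. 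An induction (the claim inside the proof) shows that $C_s(\tau)$ is, up to sign and the boundary $S_s(\tau)$, a sum of basis elements $e(F^*(\vec\sigma))$ indexed by strings—so every basis element occurring in $A_n(\tau)$ is of the controlled form $a_1\cup\bigcup_{i\geq2}F_\alpha(a_i)$ for a string $\vec a$ on $[\tau]^{\leq n+1}$. Finally one defines $\Psi(\rho)(\alpha)=E_\Phi(A_n^\alpha(\rho))(\alpha)$, where $E_\Phi$ interprets $e(\sigma)$ as $d\Phi(\sigma)$. Finite support of $\Psi_\tau$ then comes precisely from the third bullet of Lemma \ref{LPH}: each interpreted basis element is a $d\Phi$ value on a tuple whose support is a fixed finite $T_{a_1}$. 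The identity $d\Psi=d\Phi$ off $S$ comes from the formal relation $S_n(\tau)=(-1)^{n+1}d(A_{n+1}(\tau))$ and the fact that $E_\Phi$ annihilates boundaries. One then trivializes by $\Psi+(\Phi-\Psi)\upharpoonright\bigoplus_{\alpha\in S}\mathcal{G}_\alpha$. This recursion, and the inductive cancellation claim that makes it work, is exactly the ``orchestrating'' you flagged as missing; without it the proof is not complete.

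One further caution about your third step: you suggest verifying $d\Psi=d\Phi$ on all of $[A]^{n+1}$ directly via exchange. This is not quite how it goes. The partition conclusion controls only $(n+1)$-tuples of the special form $a_1\cup\bigcup_{i\geq 2}F_\alpha(a_i)$, and the paper never needs uniform control over arbitrary tuples; instead it arranges that $\Psi$ itself is a signed sum of $d\Phi$ values on controlled tuples, so the desired identity follows from the algebra of $d$ and $*$ rather than from covering every tuple by exchanges.
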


\begin{proof}
Let $A,S,F_\alpha,T$ be as in the conclusions of Lemma \ref{LPH} applied to $c_\Phi$; note that since $c_\Phi$ colors using finite sets, we use $T$ rather than $\beta$ to denote the fixed color of $c_\Phi$ on $F_\alpha$ applied to maximal strings.  
Let $S^*=S\cup T$. 
In the $\lim{}^1$ case the result is fairly easy: for any $x,y\in A$ and $\alpha\not\in S^*$, after applying appropriate maps $p$, 
\[\begin{aligned}
    \Phi_x(\alpha)-\Phi_y(\alpha)&=(\Phi_x(\alpha)-\Phi_{F_\alpha(x,y)})+(-\Phi_y(\alpha)+\Phi_{F_\alpha(x,y)}(\alpha))\\
    &=d\Phi_{x,F_{\alpha}(x,y}(\alpha)-d\Phi_{y,F_\alpha(x,y)}(\alpha)\\
    &=0,
\end{aligned}\]
where the first equality is adding $0$ (noting that $F_\alpha(x,y)(\alpha)\geq x(\alpha),y(\alpha)$), the second equality is the definition of $d$, and the third is that by choice of $F_\alpha$, $c_\Phi(x,F_\alpha(x,y))=c(y,F_\alpha(x,y))=T$ and $\alpha\not\in T$.  
In particular, setting
\[\Psi_x(\alpha,i)=\left\{\begin{array}{cc}
     \Phi_x(\alpha,i)&\alpha\in S^*  \\
     0&\alpha\not\in S^*
\end{array}\right.\]
is a type II trivialization of $\Phi\upharpoonright A$. 

The general case will take a bit more work. 
We will make use of formal expressions; denote $\mathrm{Free}(Y)$ as the free abelian group over $Y$ and denote the basis element corresponding to $y$ as $e(y)$. 
We will define formal expressions for $1\leq s\leq n$, $\rho\in A^{s}$ and $\alpha\not\in S$.  
We first outline how what we construct fits into the broader framework. 
Note that for each $\mathbf{x}\in X^n$, $d\Phi$ provides maps $E_{\Phi}^\alpha\colon\mathrm{Free}(\{f\in X\mid \bigwedge\mathbf{x}(\alpha) \leq f(\alpha)\}^{n+1})\to G_{\alpha,\bigwedge\mathbf{x}(\alpha)}$ by interpreting $e(\mathbf{y})$ as $d\Phi(\mathbf{y})(\alpha)$ and using the relevant structure maps. 
Eventually, we will set $\Psi_\mathbf{x}(\alpha)=E_\Phi^\alpha(A_n^\alpha(\mathbf{x}))$ for some formal expression $A_n^\alpha(\mathbf{x})$ to be defined and show that $\Psi$ is a (type II) trivialization of $\Phi\upharpoonright A$ outside of $S^*$. 
That is, $\Psi_{\mathbf{x}}$ is finitely supported for each $\mathbf{x}\in A^n$ and whenever $\mathbf{y}\in A^{n+1}$ and $\alpha\in\kappa\setminus S^*$, $d\Psi_{\mathbf{y}}(\alpha)=d\Phi_{\mathbf{y}}(\alpha)$. 
We will then have that
\[\Psi'_{\mathbf{x}}(\alpha)=\left\{\begin{array}{cc}
     \Phi_{\mathbf{x}}(\alpha)&\alpha\in S^*  \\
     \Psi_{\mathbf{x}}(\alpha)&\text{otherwise} 
\end{array}\right.\]
is a type II trivialization of $\Phi$.

We extend the values of $F$ from sets to sequences by declaring $F(\mathbf{x})=F(\operatorname{range}(\mathbf{x}))$. 
The first type of formal expressions will be, for each $\mathbf{x}\in A^s$, $\alpha\not\in S$, and $1\leq s\leq n$,
\[A_s^\alpha(\mathbf{x})\in\mathrm{Free}(\{f\in X\mid \bigwedge\mathbf{x}(\alpha) \leq f(\alpha)\leq F_\alpha(\rho)(\alpha)\}^{s+1}).\]
The second two will be, for $\mathbf{x}\in A^{s+1}$ and $\alpha\not\in S$, expressions
\[C_s^\alpha(\mathbf{x})\in\mathrm{Free}(\{f\in X\mid \bigwedge\mathbf{x}(\alpha) \leq f(\alpha)< F_\alpha(\mathbf{x})(\alpha)\}^{s+1})\]
\[S_s^\alpha(\mathbf{x})\in\mathrm{Free}(\{f\in X\mid \bigwedge\mathbf{x}(\alpha) \leq f(\alpha)\leq F_\alpha(\tau)(\alpha)\}^{s+1}).\]
The idea is that $A_s^\alpha$ is the $s$-stage approximation to a trivialization, $S^\alpha_s$ is the boundary of $A_{s+1}^\alpha$ with a sign, and $C_s^\alpha$ is the error between the boundary of $A^\alpha_s$ and the boundary of $\Phi$. 

We also will make use of the operations
\[d\colon \mathrm{Free}(\{f\in X\mid \bigwedge\rho(\alpha) \leq f(\alpha)\}^{s+1})\to \mathrm{Free}(\{f\in X\mid \bigwedge\rho(\alpha) \leq f(\alpha)\}^{s})\]
given by $de(\mathbf{f})=\sum_i(-1)^ie(\mathbf{f}^i)$.
Also, given $g\in\omega^\kappa$, we define a map
\[-*g\colon \mathrm{Free}(\{f\mid\bigwedge\rho(\alpha)\leq f(\alpha)<g(\alpha)\}^{s})\to \mathrm{Free}(\{f\mid\bigwedge\rho(\alpha)\leq f(\alpha)\leq g(\alpha)\}^{s+1})\]
on the standard basis by setting $e(f_0,...,f_{s-1})*g=e(f_0,...,f_{s-1},g)$. 
These two operations satisfy the relation 
\[d(x*y)=d(x)*y+(-1)^sx.\]
As the base case $A_1^\alpha$, we set $A_1^\alpha=0$. 
In general, we define
\[C_s^\alpha(\mathbf{x})=e(\mathbf{x})-\sum_{i<n+1}(-1)^iA_s^\alpha(\mathbf{x}^i)\]
\[A_{s+1}^\alpha(\mathbf{x})=(-1)^{s+1}C_s^\alpha(\mathbf{x})*F_\alpha(\mathbf{x})\]
\[S_s^\alpha(\mathbf{x})=d\left(C_s^\alpha(\mathbf{x})*F_\alpha(\mathbf{x})\right)=(-1)^{s+1}d A^\alpha_{s+1}.\]
The following lemma is the key point in ensuring our approximations converge to a trivialization. 
Recall that $\tau^{\bb{s+1}}$ is the set of maximal strings on $\tau$ as defined in Definition \ref{str}. 

\begin{claim} \label{C=S}
$C_s^\alpha(\mathbf{x})$ is of the form $(-1)^{s+1}S^\alpha_s(\mathbf{x})$ plus terms of the form $e(F^*_\alpha(\vec\sigma))$ for $\vec\sigma\in \operatorname{range}(\mathbf{x})^{\bb{s+1}}$. 
\end{claim}

\begin{proof}
Observe that $S_s^\alpha(\mathbf{x})$ can be rewritten as follows: 
\[\begin{aligned}
&S^\alpha_s(\mathbf{x})&&=d(C_s^\alpha(\mathbf{x}))*F_\alpha(\mathbf{x})+(-1)^{s+1}C_s^\alpha(\mathbf{x})\\
&&&=d(e(\mathbf{x}))*F_\alpha(\mathbf{x})-d\left(\sum_{i<n+1}(-1)^iA_s^\alpha(\mathbf{x}^i)\right)*F_\alpha(\mathbf{x})+(-1)^{s+1}C_s^\alpha(\mathbf{x})\\
&&&=\sum_{i<s+1}(-1)^ie(\mathbf{x}^i,F_\alpha(\mathbf{x}))-\sum_{i<n+1}(-1)^id[A^\alpha_s(\mathbf{x}^i)]*F_\alpha(\mathbf{x})+(-1)^{s+1}C^\alpha_s(\mathbf{x}).
\end{aligned}\]
For convenience, we write 
\[(*)=\sum_{i<s+1}(-1)^ie(\mathbf{x}^i,F_\alpha(\mathbf{x}))-\sum_{i<n+1}(-1)^id[A^\alpha_s(\mathbf{x}^i)]*F_\alpha(\mathbf{x})\]

The proof proceeds by induction on $s$ to show that $(*)$ is a sum of terms of the form $e(\langle F_\alpha(a_i)\mid 1\leq i\leq s+1\rangle)$ for $\vec a\in\operatorname{range}(\mathbf{x})^{\bb{s+1}}$. 
The $s=1$ case is immediate: $A_1=0$ and the two terms on the left hand sum are of the appropriate form. 

Now assume that $s>1$ and the induction hypothesis holds for $s-1$. 
By definition of $S_{s-1}^\alpha$, for $s>1$ $(*)$ is equal to
\[\sum_{i<s+1}(-1)^ie(\mathbf{x}^i,F_\alpha(\mathbf{x}))-(-1)^s\sum_{i<s+1}(-1)^iS^\alpha_{s-1}(\mathbf{x}^i)*F_\alpha(\mathbf{x}).\]
By the inductive hypothesis, $(-1)^sS^\alpha_{s-1}(\mathbf{x}^i)*F_\alpha(\mathbf{x})$ is of the form $C_{s-1}^\alpha(\mathbf{x}^i)*F_\alpha(\mathbf{x})$ plus terms of the form $e(F^*(\vec a))$ for $\vec a\in[\operatorname{range}(\mathbf{x})]^{\bb {s+1}}$, as $e(\{F_\alpha^*(a_i)\mid 1\leq i\leq s\})*F_\alpha(\mathbf{x})$ is just appending $F_\alpha(\mathbf{x})$ to a string. 
Then $(*)$ reduces to terms of the form $e(\langle F_\alpha(a_i)\mid i\leq i\leq s+1\rangle)$ plus
\[\begin{aligned}
& \sum_{i<s+1}(-1)^{i} e\left(\mathbf{x}^{i}, F_\alpha({\mathbf{x}})\right)-\sum_{i<s+1}(-1)^{i}\left[\mathcal{C}_{s-1}^\alpha\left(\mathbf{x}^{i}\right) * F_\alpha({\mathbf{x}})\right] \\
=& \sum_{i<s+1}(-1)^{i} e\left(\mathbf{x}^{i}, F_\alpha({\mathbf{x}})\right)-\\&\sum_{i<s+1}(-1)^{i}\left[e\left(\mathbf{x}^{i}, F_\alpha({\mathbf{x}})\right)-\sum_{j<s}(-1)^{j} \mathcal{A}_{s-1}^\alpha\left(\left(\mathbf{x}^{i}\right)^{j}\right) * F_\alpha({\mathbf{x}})\right] \\
=& \sum_{i<s+1} \sum_{j<s}(-1)^{i+j} \mathcal{A}_{s-1}^\alpha\left(\left(\mathbf{x}^{i}\right)^{j}\right) * F_\alpha({\mathbf{x}}).
\end{aligned}\]
The key observation is that for any term with $i\leq j$, the corresponding term with $i'=j+1$ and $j'=i$ is the same but with opposite sign, thus showing that 
\[\sum_{i<s+1} \sum_{j<s}(-1)^{i+j} \mathcal{A}^\alpha_{s-1}\left(\left(\mathbf{x}^{i}\right)^{j}\right) * F_\alpha({\mathbf{x}})=0.\]

\end{proof}
As stated above, set $\Psi_\mathbf{x}(\alpha)=E_\Phi^
\alpha(A_n^\alpha(\mathbf{x}))$ for $\alpha\not\in S^*$ and $\mathbf{x}\in A^n$. 
If $\alpha\not\in S^*$ then for each $\mathbf{y}\in A^{n+1}$,
\[\begin{aligned}
    d\Psi_{\mathbf{y}}(\alpha)&=\sum_{i=0}^n(-1)^i\Psi_{\mathbf{y}^i}(\alpha)\\
    &=\sum_{i=0}^n(-1)^iE_\Phi^\alpha(A_n^\alpha(\mathbf{y}^i))\\
    &=E^\alpha_\Phi\left(\sum_{i=0}^n(-1)^iA_n^\alpha(\mathbf{y}^i)\right)\\
    &=E^\alpha_\Phi(e(\mathbf{y})-C^\alpha_{n+1}(\mathbf{y}))\\
    &=E^\alpha_\Phi(e(\mathbf{y}))-E^\alpha_\Phi(C^\alpha_{n+1}(\mathbf{y}))\\
    &=d\Phi_\mathbf{y}(\alpha)-E^\alpha_\Phi(C^\alpha_{n+1}(\mathbf{y}))
\end{aligned}\]
where the first line is the definition of $d$, the second line is the definition of $\Psi$, the third and fifth are that $E^\alpha_\Phi$ is a homomorphism, the fourth is the definition of $C^\alpha_{n+1}$, and the last line is the definition of $E^\alpha_\Phi$. 
By Claim \ref{C=S}, $C^\alpha_{n+1}(\mathbf{y})$ is of the form $(-1)^{s+1}S_{n+1}^\alpha(\mathbf{y})$ plus terms of the form $e(F^*_\alpha(\vec\sigma))$ for $\vec\sigma\in\operatorname{range}(\mathbf{y})^{\bb{n+1}}$. 
Note that $E_\Phi^\alpha(S_{n+1}^\alpha(\mathbf{y}))=d^2\Psi_{\mathbf{y}^\frown F_\alpha(\mathbf{y})}(\alpha)=0$. 
Moreover, since $c_\Phi\circ F_\alpha^*$ is constant with value $T$ and $\alpha\not\in T$, whenever $\vec\sigma\in\operatorname{range}(\mathbf{y})^{\bb{n+1}}$, we have $E^\alpha_\Phi(e(F^*_\alpha(\vec\sigma)))(\alpha)=0$. 
In particular, for $\alpha\not\in S^*$, $d\Phi_{\mathbf{y}}=d\Psi_{\mathbf{y}}$.

If we can show that each $\Psi_{\mathbf x}$ is finitely supported, we will complete the proof as 
\[\Psi_{\mathbf{x}}'(\alpha)=\left\{\begin{array}{cc}
     \Psi_{\mathbf{x}}(\alpha)&\alpha\not\in S^*  \\
     \Phi_{\mathbf{x}}(\alpha)&\alpha\in S^* 
\end{array}\right.\]
will be a type II trivialization of $\Phi$. 
It follows from an easy induction that $A_s^\alpha(\rho)$ is always a sum of terms of the form $e(a_0\cup\{F_\alpha(a_i)\mid 1\leq i\leq m\})\}$ for $\vec a$ a string on $[\mathbf{x}]^{\leq n}$. 
Since there are only finitely many such strings and there are only finitely many values of $c_\Phi$ for each string by choice of the $F_\alpha$, $\Psi_{\mathbf{x}}$ will have the desired finite support.
\end{proof}
\subsection{Trivialization Everywhere}
We will now complete the proof of our main results. 
Let $\lambda_0=\beth_1(\kappa)^+$ and $\lambda_{n+1}=\sigma(\lambda_n^+,2n+1)$; observe that $\sup_n(\lambda_n)=\beth_\omega(\kappa)$ and each $\lambda_n$ is $<\kappa$ inaccessible.  

\begin{lem}
Suppose that $\chi\geq \beth_\omega(\kappa)$ and let $\mathbb{P}=\operatorname{Fn}(\chi\times\kappa,\omega)$. 
The following holds in $V^\mathbb{P}$:
Suppose that $\mathcal{I}$ is an injective $\Om_{\kappa}$ system and $\Phi$ is an $n$-coherent family corresponding to $\mathcal{I}$ indexed by a set $X\subseteq\omega^\kappa$ containing at least $\lambda_n$ many of the Cohen sequences added by $\mathbb{P}$. 
Then $\Phi$ is $n$-trivial. 
\end{lem}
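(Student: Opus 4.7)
The plan is to proceed by induction on $n \geq 1$, proving at each stage that in $V^{\mathbb{P}}$ every $n$-coherent family $\Phi$ corresponding to an injective $\Om_\kappa$ system $\mathcal{I}$ that is indexed by a set $X \subseteq \omega^\kappa$ containing at least $\lambda_n$ of the Cohen sequences added by $\mathbb{P}$ is $n$-trivial. At each stage the structure is identical: first apply Lemma \ref{large_triv} with $\lambda = \lambda_{n-1}$ to obtain $A \subseteq X$ containing at least $\lambda_{n-1}$ Cohen sequences on which $\Phi \upharpoonright A$ is type II trivial; use the injectivity of $\mathcal{I}$ to upgrade this to a type I trivialization; and then invoke Lemma \ref{propagate} to extend the trivialization from $A$ to all of $X$.

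For the base case $n = 1$, Lemma \ref{large_triv} supplies such an $A$ containing at least $\lambda_0 = \beth_1(\kappa)^+ \geq \kappa^+$ Cohen sequences with $\Phi \upharpoonright A$ trivial. The unboundedness hypothesis of Lemma \ref{propagate} follows from Proposition \ref{unbdd}, and its cohomology vanishing condition is vacuous when $n = 1$, so the lemma applies directly.

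The only non-routine point in the inductive step is verifying $H^j(K^\bullet(\mathcal{I} \upharpoonright A)) = 0$ for $1 \leq j \leq n - 1$. By Proposition \ref{lim char} this cohomology computes the derived limit of the corresponding system over the downward closure $Y$ of $A$; combining the long exact sequence associated to $0 \to I \to \overline{I} \to \overline{I}/I \to 0$ with the vanishing of $\lim^k(\overline{I} \upharpoonright Y)$ for $k > 0$ provided by Corollary \ref{lim_vanish}, the required vanishing translates into the statement that every $j$-coherent family corresponding to $\mathcal{I}$ indexed over $A^j$ is $j$-trivial, for each $1 \leq j \leq n - 1$. Since $A$ contains at least $\lambda_{n-1} \geq \lambda_j$ Cohen sequences for every such $j$, this is precisely the inductive hypothesis applied on the index set $A$. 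Together with Proposition \ref{unbdd}, Lemma \ref{propagate} then yields the $n$-triviality of $\Phi$ on all of $X$.

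The hard work behind the result, namely producing a partial trivialization of a coherent family on a large set of Cohen sequences and then propagating that trivialization across the full index set, has already been carried out in Lemmas \ref{large_triv} and \ref{propagate}. The main obstacle at this final stage is consequently modest: matching the cohomology hypothesis of Lemma \ref{propagate} to the form available from the inductive hypothesis. This is precisely where the injectivity of $\mathcal{I}$ plays its double role, both by killing the positive-degree derived limits of $\overline{I}$ and by collapsing type II triviality into type I triviality.
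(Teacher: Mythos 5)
Your proof is correct and follows essentially the same route as the paper's: induction on $n$, using Lemma \ref{large_triv} to obtain a large $A$ on which $\Phi$ is trivial, then Lemma \ref{propagate} together with Proposition \ref{unbdd} and the inductive hypothesis to propagate. You additionally spell out how the inductive hypothesis supplies the cohomology-vanishing condition of Lemma \ref{propagate} via Corollary \ref{lim_vanish} and the long exact sequence, a detail the paper leaves implicit.
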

\begin{proof}
The proof proceeds by induction on $n$. 
By Lemma \ref{large_triv}, there is an $A\subseteq X$ containing at least $\lambda_{n-1}$ many Cohen sequences such that $\Phi\upharpoonright A$ is $n$-trivial. 
By Lemma \ref{propagate}, the inductive hypothesis, and Proposition \ref{unbdd}, $\Phi$ is $n$-trivial.
\end{proof}
We obtain Theorem \ref{add} as an immediate corollary.

\section{Conclusion}
We conclude by noting several questions related to the work here. 
One conspicuous example is the following.
\begin{question}
Is it consistent that derived limits of $\Om_\kappa$ systems are additive for every $\kappa$? 
\end{question}
The answer is yes in the case of $\lim^1\mathbf{A}_\kappa$, as shown in \cite[Theorem 5.1]{SHDLST}; in fact, it is implied by $\lim^1\mathbf{A}=0$. 
We may ask whether this holds for higher values of $n$.
\begin{question} \label{imply_quest}
Does $\lim^n\mathbf{A}=0$ imply $\lim^n\mathbf{A}_\kappa=0$ for every cardinal $\kappa$?
\end{question}
The author has been informed through personal communication of a forthcoming negative solution by Jeffrey Bergfalk and Matteo Casarosa to Question \ref{imply_quest} for all $n>0$ and consequently to Questions \ref{add_ques} and \ref{coh}.

Worth noting is that the forcings involved in this paper necessarily increase the size of the continuum; in particular, the case of $\kappa=2^{\aleph_0}$ is unclear.
\begin{question}
Is it consistent that $\lim^n\mathbf{A}_{2^{\aleph_0}}=0$ for all $n>0$?
\end{question}

There are two questions about $\Om_\kappa$ systems which serve as extension from the questions about $\mathbf{A}_\kappa$ to all $\Om_\kappa$ systems.
The first is on additivity.
\begin{question} \label{add_ques}
Does additivity of $\lim^n$ for $\Om_\omega$ systems imply additivity of $\Om_\kappa$ systems for all uncountable $\kappa$?
\end{question}
The second is about coherent families. 
\begin{question} \label{coh}
Suppose that every $n$-coherent family is $n$-trivial for all $\Om_\omega$ systems. 
Must the same hold for all uncountable $\kappa$?
\end{question}
At the moment, we believe the proof of \cite[Theorem 5.1]{SHDLST} will generalize to show the answer to Question \ref{coh} is yes if $n=1$. 

We may also ask about how large the continuum must be for additivity of derived limits, both for $\mathbf{A}$ and for arbitrary $\Om_\omega$ systems.
\begin{question} \label{minA}
What is the minimum value of the continuum compatible with $\lim^n\mathbf{A}=0$ for every $n>0$?
\end{question}
\begin{question} \label{min}
What is the minimum value of the continuum compatible with additivity of derived limits for $\Om_\omega$ systems?
\end{question}
Note that if $GCH$ holds in the ground model, then after forcing with $\operatorname{Add}(\omega,\beth_\omega)$ the continuum will be of size $\aleph_{\omega+1}$, giving an upper bound. 
In light of the results of \cite{NVHDL} and \cite{NVHDLWS}, of particular interest is when $\mathfrak{b}<\mathfrak{d}$ and there are no unbounded $\mathfrak{d}$ chains, for instance in the Mitchell and Miller models.
A related question, the answer to which is yes when $n=1$, is the following. 
\begin{question} \label{om_n_quest}
Suppose $X$ is a $<^*$-increasing $\omega_n$ sequence. Is $\lim^n\mathbf{A}\upharpoonright X\neq0$?
\end{question}
A theorem from the theory of Hausdorff gaps implies the answer is yes when $n=1$; see \cite[96-98]{BE}; a special case when $X$ is an $\aleph_1$ scale appears as \cite[Theorem 2.4]{DSV}. 
Question \ref{om_n_quest} is likely very difficult and would open many doors. 
A positive answer would remove the requirement of weak $\diamondsuit$ sequences from \cite{NVHDL} since if $\mathfrak{b}=\mathfrak{d}=\kappa$ then there is a $<^*$-increasing $\omega_n$ sequence which is $<^*$ cofinal in $\omega^\omega$; a nontrivial $n$-coherent family defined along such a scale induces a nontrivial coherent family on all of $\omega^\omega$. 
It seems plausible that a positive answer would also remove the requirement of weak $\diamondsuit$ from \cite{NVHDLWS} as well, though the $<^*$-increasing $\omega_n$ sequence defined there is (importantly) not $<^*$-cofinal. 

Finally, we conclude by asking how far our results on additivity of strong homology can be extended. 

\begin{question}
    On what class of spaces can strong homology be additive and have compact supports?
\end{question}
The classes of Polish spaces and $\sigma$-compact Polish spaces are of particular interest.

\end{document}